\def\UseSection{
        \numberwithin{equation}{section}
        \newtheorem{theorem}    {Theorem}[section]
        \DefineTheorems 
}
\newcommand{\black}{\black}
\numberwithin{equation}{section}
\newcommand{\bb}[1]{\mathbb{#1}}
\newcommand{\1}{\mathbbm{1}}
\newcommand{\D}[0]{\text{d}}
\newcommand{\conn}{\leftrightarrow}
\newcommand{\V}[1]{\boldsymbol{#1}}
\newcommand{\blank}[1]{}
\newcommand{\B}{\bb B}
\newcommand{\E}{\bb E}
\newcommand{\R}{\bb R}
\newcommand{\Z}{\bb Z}
\newcommand{\G}{\bb G}
\newcommand{\N}{\bb N}
\newcommand{\T}{\bb T}
\renewcommand{\P}{\bb P}
\newcommand{\Q}{\bb Q}
\renewcommand{\V}{\bb V}
\newcommand{\Cov}{\operatorname{Cov}}
\newcommand{\CoVr}{\operatorname{CoVr}}
\newcommand{\lrDini}[1]{\left(\frac{d}{d #1}\right)_{\hspace{-0.2em}+}\!}
\tikzset{nomorepostaction/.code=\let\tikz@postactions\pgfutil@empty}
\newcommand\nxleftrightarrow[2][]{%
  \mathrel{\tikz[baseline=-.7ex] \path node[slash underlined,draw,<->,anchor=south] {\(\scriptstyle #2\)} node[anchor=north] {\(\scriptstyle #1\)};}}
\newcommand{\cF}{\mathcal F}
\newcommand{\cG}{\mathcal G}
\newcommand{\cP}{\mathcal P}
\newcommand{\sA}{\mathscr A}
\newcommand{\sB}{\mathscr B}
\newcommand{\sC}{\mathscr C}
\newcommand{\bbB}{\mathbb B}
\newcommand{\bbT}{\mathbb T}
\newcommand{\veee}[1]{\langle #1 \rangle}
\newcommand{\xvee}{\veee{x}}
\newcommand{\eps}{\varepsilon}
\crefname{theorem}{Theorem}{Theorems}
\crefname{thm}{Theorem}{Theorems}
\crefname{lemma}{Lemma}{Lemmas}
\crefname{claim}{Claim}{Claims}
\crefname{lem}{Lemma}{Lemmas}
\crefname{remark}{Remark}{Remarks}
\crefname{prop}{Proposition}{Propositions}
\crefname{proposition}{Proposition}{Propositions}
\crefname{defn}{Definition}{Definitions}
\crefname{definition}{Definition}{Definitions}
\crefname{corollary}{Corollary}{Corollaries}
\crefname{conjecture}{Conjecture}{Conjectures}
\crefname{question}{Question}{Questions}
\crefname{chapter}{Chapter}{Chapters}
\crefname{section}{Section}{Sections}
\crefname{part}{Part}{Parts}
\crefname{figure}{Figure}{Figures}
\newcommand{\nnb}	{\nonumber \\}
\newcommand{\bubble}{{\sf B}}
\renewcommand{\triangle}{{\sf T}}
\newtheorem*{rk-non}  {Remark}
\def\DefineTheorems{
	\newtheorem{lemma}      [theorem] {Lemma}
	\newtheorem{cor}        [theorem] {Corollary}
	
	\newtheorem{prop}        [theorem] {Proposition}
	\theoremstyle{definition}
	\newtheorem{defn}       [theorem] {Definition}
	\newtheorem{rk}       [theorem] {Remark}

}
\title{High-dimensional near-critical percolation  \\
and the torus plateau}
 \author{
    Tom Hutchcroft\thanks{The Division of Physics, Mathematics and Astronomy, California Institute of Technology, Pasadena, CA 91125, USA
    \url{https://orcid.org/0000-0003-0061-593X}, {\tt t.hutchcroft@caltech.edu}} \and
    Emmanuel Michta\thanks{Department of Mathematics,
     University of British Columbia,
     Vancouver, BC, Canada V6T 1Z2.
     Michta: \url{https://orcid.org/0000-0001-7222-0422}, {\tt michta@math.ubc.ca}.
     Slade:  \url{https://orcid.org/0000-0001-9389-9497}, {\tt slade@math.ubc.ca}.}
    \and
   Gordon Slade$^\dagger$}
\begin{document}

\date{\vspace{-5ex}} 

\maketitle

\begin{abstract}
We consider percolation on $\mathbb{Z}^d$ and on the $d$-dimensional discrete torus, in dimensions
$d \ge 11$ for the nearest-neighbour model and in dimensions $d>6$ for spread-out models.
For $\mathbb{Z}^d$, we
 employ a wide range of techniques and previous results to
prove that there exist positive constants $c$ and $C$ such that the slightly subcritical two-point function and one-arm probabilities satisfy
\[
\P_{p_c-\varepsilon}(0 \leftrightarrow x) \leq \frac{C}{\|x\|^{d-2}} e^{-c\varepsilon^{1/2} \|x\|}
\quad \text{ and } \quad
\frac{c}{r^{2}} e^{-C \varepsilon^{1/2}r} \leq
\P_{p_c-\varepsilon}\Bigl(0 \leftrightarrow \partial [-r,r]^d \Bigr)
\leq \frac{C}{r^2} e^{-c \varepsilon^{1/2}r}.
\]
Using this, we prove that
 throughout the critical window
the torus two-point function has a ``plateau,''
meaning that it decays for small $x$ as $\|x\|^{-(d-2)}$ but for large $x$ is essentially constant
and of order $V^{-2/3}$ where $V$ is the volume of the torus.  The plateau for the two-point
function leads immediately to a proof of the torus triangle condition, which is known
to have many implications for the critical behaviour on the torus, and also leads to a proof
that the critical values on the torus and on $\mathbb{Z}^d$ are separated by a multiple of
$V^{-1/3}$.
The torus triangle condition and the size of the separation of critical points have been
proved previously, but our proofs are different and are direct consequences
of the bound on the $\mathbb{Z}^d$ two-point function.  In particular,
we use results derived from the lace expansion on $\mathbb{Z}^d$, but in contrast
to previous work on high-dimensional torus percolation we do not need or use a separate
torus lace expansion.
\end{abstract}

\noindent
Keywords: percolation; lace expansion; two-point function;
one-arm exponent; triangle condition; torus plateau.

\medskip \noindent
MSC2010 Classifications: 05C80, 60K35, 82B27, 82B43.

\tableofcontents

\section{Introduction and results}

\subsection{Introduction}

Percolation on $\mathbb{Z}^d$
has been intensively studied by mathematicians and physicists since the 1950s
as a fundamental model of a phase transition.  Of particular interest is the universal
critical behaviour in the vicinity of the critical value $p_c$.  From a mathematical
perspective, the critical behaviour has been established for certain 2-dimensional models
using the breakthroughs enabled by conformal invariance and the Schramm--Loewner
evolution \cite{Smir01CR,SW01},
and for a wide class of models above the upper critical dimension $d=6$ using the lace expansion \cite{HS90a,HH17book}.
The critical behaviour in
intermediate dimensions $d=3,4,5,6$ remains a major challenge for probability theory,
which at present appears not to have adequate tools even to approach the problem.
Considerable progress has been also made in the understanding of
the finite-size scaling associated with critical percolation on a high-dimensional discrete torus.
In this paper, we consider percolation in dimensions $d>6$, both on $\mathbb{Z}^d$ and on
the torus.

The role of $d=6$ as the upper critical dimension for percolation was first pointed out by
Toulouse~\cite{Toul74}.  The meaning of ``upper critical dimension''
is that the critical exponents for percolation
on $\Z^d$ in dimensions $d>6$ are predicted to be the same as for percolation on a tree
(known as mean-field theory),
whereas for $d<6$ they are not.  Critical exponents for $d=6$ are predicted to have
logarithmic corrections to mean-field behaviour \cite{EGG78}.  Various one-sided mean-field bounds
for critical exponents, such as $\gamma \ge 1$, $\beta \le 1$, and $\delta \ge 2$,
 have been proven to hold in all dimensions \cite{AN84,BA91}.
In addition, results implying that mean-field behaviour cannot apply in dimensions $d<6$
have been obtained in \cite{CC87,Tasa87} (see also \cite{BCKS99}).
In an important paper in 1984, Aizenman and Newman \cite{AN84} identified a
condition predicted to be valid for $d>6$---the
\emph{triangle condition}---as a sufficient condition
for $\gamma=1$, which is mean-field behaviour for the
expected cluster size (also called the susceptibility).  Then Barsky and Aizenman \cite{BA91} proved
that the triangle condition also implies that $\beta=1$ (exponent for the percolation
probability) and $\delta=2$ (exponent for the magnetisation). See also
the recent paper
\cite{Hutc22} for alternative proofs of these results.

In 1990, Hara and Slade \cite{HS90a} derived their lace expansion for bond percolation and used it to
verify the triangle condition for the nearest-neighbour model
in sufficiently large dimensions ($d \ge 19$ is large enough
\cite{HS94}).
Later, Fitzner and van der Hofstad \cite{FH17} extended this to all $d \ge 11$.
An extension to $d>6$ has not yet been possible, and seems to be impossible without
the introduction of some significant new idea, due to the fact that convergence
of the lace expansion is proved using a small parameter which is closely related to the
triangle diagram and which is not believed to be small in dimensions close to but above $6$.
On the other hand, since the critical exponents are predicted to be \emph{universal}, meaning
that they take the same values for any symmetric
short-range
model in a given dimension $d$,
it is natural to introduce models with a parameter that \emph{can} be taken to be small in any
fixed dimension $d>6$.  This was accomplished in \cite{HS90a}, where the triangle condition
was proved for a wide variety of sufficiently spread-out models in any dimension $d>6$.
A basic example of a spread-out model is a finite-range model
of bond percolation on $\Z^d$ with long bonds, not just
nearest-neighbour bonds, and
the reciprocal of the degree
provides a small parameter
for convergence of the lace expansion in any dimension $d>6$. Related results for long-range
 models have also been established in \cite{CS14,HHS08}.

Over the last thirty years, a large literature on high-dimensional percolation has emerged,
using the convergence of the lace expansion as a starting point, typically both
for sufficiently spread-out models
in dimensions $d >6$ and for the nearest-neighbour model in large enough dimensions.
Reviews can be found in \cite{HH17book,Slad06}.  In particular,
Hara proved the square root decay of the mass (inverse correlation length)
\cite{Hara90};
Hara, van der Hofstad and Slade \cite{HHS03} and Hara \cite{Hara08} proved that
the critical two-point function has the Gaussian decay $|x|^{-(d-2)}$;
Kozma and Nachmias
\cite{KN11} proved the mean-field behaviour $r^{-2}$
for the one-arm exponent; and Chatterjee and Hanson
\cite{CH20} identified the decay of the critical two-point function in a half-space.
We use these results to prove our main results for high-dimensional percolation on $\Z^d$. Following the methodology of \cite{Hutc20},
we also use the OSSS theory of decision trees \cite{OSSS05}, whose application to
statistical mechanical models was pioneered by Duminil-Copin, Raoufi, and Tassion \cite{DRT19}, to obtain a new differential
inequality which facilitates the transfer of certain estimates at the critical point to
estimates at nearby subcritical points.

Our results for $\Z^d$ consist of an upper bound of the form $|x|^{-(d-2)}\exp [-c|p-p_c|^{1/2}|x|]$ for the
slightly subcritical two-point function, and upper and lower bounds of the
form $r^{-2}\exp [-c|p-p_c|^{1/2}r]$ for the (extrinsic) one-arm probability.
We stress for the avoidance of doubt that the inclusion of these sharp exponential factors for $p<p_c$, with the square root
in the exponent, requires substantial new ideas and is not a minor extension of the previous results.

In a separate line of research initiated by Borgs, Chayes, van der Hofstad, Slade, and Spencer in \cite{BCHSS05a,BCHSS05b}, the critical
behaviour of percolation on a discrete $d$-dimensional torus has been studied in depth,
both for the nearest-neighbour model with $d$ sufficiently large and for
sufficiently spread-out models in dimensions $d>6$.  There is a triangle
condition for the torus (and indeed for general high-dimensional transient graphs)
which implies that percolation on the torus behaves in many respects like
the Erd\H{o}s--R\'enyi random graph.
In particular, the notion of a critical point which is valid for $\Z^d$ is replaced
by the notion of a critical scaling window of $p$ values.
These ideas are developed in \cite{BCHSS05a,HHI07,HHII11,HH17book,HS14}, and are based on
the verification of the triangle condition in high-dimensions via a separate lace
expansion on the torus as opposed to on $\Z^d$ \cite{BCHSS05b}.

Our first result for the torus is a proof that the torus two-point function has a ``plateau.''
The plateau refers to the fact that the torus two-point function within and slightly
below the critical window decays for small $x$ like its $\Z^d$ counterpart before levelling off at a constant value for large $x$.
  Related plateaux have been proven to exist
for simple random walk (the lattice Green function) for $d>2$ \cite{Slad20_wsaw,ZGDG20,ZGFDG18}, for weakly self-avoiding walk for $d>4$
\cite{Slad20_wsaw}, and partially for the Ising model for $d>4$ \cite{Papa06}.
As we show,
 the plateau for the torus percolation two-point function is highly effective
for the analysis of torus percolation (a similar situation applies for weakly
self-avoiding walk on a torus for $d>4$ \cite{MS22}).
In particular, it directly gives
a proof of the torus triangle condition,
a proof that throughout the critical window
the torus susceptibility is of the order of the cube root of the torus
volume, and a proof that the $\Z^d$ critical
value lies in the critical window for the torus.  The triangle condition was proved
previously via a separate lace expansion on the torus \cite{BCHSS05b} which we do not
need,
the behaviour of the torus susceptibility was obtained previously in \cite{BCHSS05a},
while the verification that the $\Z^d$ critical value lies in the window was the
main topic of \cite{HHI07,HHII11}.  Our work establishes these results directly by applying
results on $\Z^d$ rather than via a separate torus lace expansion.

\subsection{The models}
\label{sec:models}

Let $\G=(\V,\B)$ be a finite or infinite graph with vertex set $\V$ and
edge (bond) set $\B$.
Given $p \in [0,1]$, we consider independent and identically distributed
Bernoulli random variables associated to each bond $b \in \B$,
taking the value ``open'' with probability $p$ and the value ``closed'' with probability $1-p$.  We denote the probability of an event $E$ by $\P_p(E)$ and the expectation of
a random variable $X$ by $\E_pX$.

We consider four different choices of $\G$:
\begin{enumerate}[(i)]
\item
Nearest-neighbour model on $\Z^d$:  $\V=\Z^d$ and $\B$ consists of all pairs $\{x,y\}$
with $\|x-y\|_1=1$.  We assume that $d \ge 11$.
\item
Spread-out model on $\Z^d$:  $\V=\Z^d$ and $\B$ consists of all pairs $\{x,y\}$
with $\|x-y\|_1\le L$, for some (large) fixed $L > 1$.  We assume that $d>6$
and $L$ is sufficiently large depending on $d$.
\item
Nearest-neighbour model on the torus $\T_r^d$:
$\V = (\Z/r\Z)^d$ for (large) period $r > 2$
 and $\B$ consists of all pairs $\|x-y\|_1=1$ with addition mod $r$.
We assume that $d \ge 11$.
We write $V=r^d$ for the \emph{volume} of the torus and are
interested in the limit $r \to \infty$.
\item
Spread-out model on the torus $\T_r^d$:
$\V = (\Z/r\Z)^d$ for (large) period $r > 2L$
with (large) fixed $L > 1$
and $\B$ consists of all pairs $\|x-y\|_1\le L$ with addition mod $r$.
We assume that $d >6$ and $L$ is sufficiently large depending on $d$.
\end{enumerate}

\noindent
\textbf{Notation:}
We set $\N = \{1,2, \ldots\}$.
 We use $c,C$ for positive constants that can vary from line to line.
We write $f \sim g$ to mean $\lim f/g =1$, $f \preceq g$ to mean $f \le Cg$,
$f \succeq g$ to mean $g \preceq f$, and
$f \asymp g$ to mean that $f \preceq g \preceq  f$, where we require that all constants depend only on the dimension $d$ and the spread-out parameter $L$.
Constants depending on additional parameters will be denoted by subscripts, so that, e.g., ``$f(n,\lambda) \asymp_\lambda
g(n,\lambda)$ for every $n \geq 1$ and $\lambda>0$" means that for each $\lambda>0$ there exist positive constants $c_\lambda,C_\lambda$ such that $c_\lambda g(n,\lambda)
\le f(n,\lambda) \leq C_\lambda g(n,\lambda)$ for every $n\geq 1$.
For $a,b\in\R$, we write $a\vee b= \max\{a,b\}$.
We write $\Lambda_r^d=[-r,r]^d \cap \Z^d$ for the box of side length $2r+1$ in $\Z^d$, omitting the $d$ when it is unambiguous to do so.
The \emph{boundary} $\partial \Lambda_r^d$
of $\Lambda_r^d$ consists of the points $x\in\Z^d$ with $\|x\|_\infty=r$.
To avoid dividing by zero, we use the \emph{Japanese bracket} notation $\langle x \rangle:=\|x\|_\infty \vee 1$ for $x\in \Z^d$.
Our notational convention is that objects on the torus have a
label $\T$, so the two-point function on the torus is written as $\tau_p^\T(x)$.
Generally, objects without the torus label are for $\Z^d$.

\smallskip

For $\Z^d$, the restrictions on the dimension $d$ and the range $L$ described in (i) and (ii) above
are so that previous lace expansion results can be applied.
We apply these $\Z^d$ results to the torus under the same restriction.
We apply existing results obtained via the lace expansion for $\Z^d$, and do not
need to revisit or further develop the expansion itself (nor do we use a separate
torus expansion as in \cite{BCHSS05b}).
More precisely, our results hold for any $d>6$ and $L \geq 1$ such that the \emph{two-point function}
$\tau_p(x):=\P_p(0 \leftrightarrow x)$
satisfies
\begin{equation}
\label{eq:twopointassumption}
\tag{T}
\tau_{p_c}(x)  \asymp \langle x \rangle^{-d+2}.
\end{equation}
Here and throughout the paper we write $p_c$ for the critical value for percolation
on $\Z^d$.
The estimate \eqref{eq:twopointassumption} was proven to hold in settings (i) and (ii) above by Hara, van der Hofstad, and Slade \cite{Hara08,HHS03} and Fitzner and van der Hofstad \cite{FH17}.
Our results also rely crucially on those of Kozma and Nachmias \cite{KN11} and Chatterjee and Hanson \cite{CH20}, who worked under the same assumptions.
We will refer to (i) and (ii) collectively as high-dimensional percolation on $\Z^d$,
and to (iii) and (iv) as high-dimensional percolation on the torus.

\subsection{Results for $\Z^d$}

\textbf{The two-point function.} Our first result, and main tool for all our further results, concerns the transition
from exponential to power-law decay for the two-point function.

\begin{theorem}
\label{thm:2pt}
Let $d>6$ and suppose that \eqref{eq:twopointassumption} holds on $\Z^d$.
There exist positive constants $c$ and $C$ such that
\begin{equation}
\label{e:taudecay}
    \tau_p(x) \le \frac{C}{\langle x \rangle^{d-2}} \exp\left[-c(p_c-p)^{1/2}\langle x \rangle \right].
\end{equation}
for every $p\in (0, p_c]$ and $x\in\Z^d$.
\end{theorem}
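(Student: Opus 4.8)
The plan is to deduce \eqref{e:taudecay} by integrating, up to $p_c$, a new differential inequality for the two-point function, so that the polynomial prefactor is inherited for free from the critical estimate \eqref{eq:twopointassumption}. To orient ourselves, note first that if $\langle x\rangle\le A(p_c-p)^{-1/2}$ for a fixed constant $A$, then $\exp[-c(p_c-p)^{1/2}\langle x\rangle]\ge e^{-cA}$ is bounded below, so \eqref{e:taudecay} follows at once from the monotonicity $\tau_p(x)\le\tau_{p_c}(x)$ and \eqref{eq:twopointassumption}; this already shows that the polynomial factor is the ``easy'' part and that the real content is to produce the exponential factor with the correct rate $(p_c-p)^{1/2}$, uniformly in $x$ and in $p$ up to $p_c$. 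Hara's theorem on the square-root decay of the mass \cite{Hara90} confirms that this is the right order.

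The heart of the matter is to establish, via the OSSS inequality \cite{OSSS05} applied to a suitably randomized decision tree computing the event $\{0\leftrightarrow x\}$ (or a mildly smeared variant), a differential inequality of the schematic form
\[
\frac{\partial}{\partial p}\log\tau_p(x)\;\ge\;c\,\langle x\rangle\,\chi(p)^{1/2}\qquad (0<p<p_c),
\]
where $\chi(p)=\sum_y\tau_p(y)$ is the susceptibility (a truncation of $\chi$ at scale $\langle x\rangle$ would serve equally well). The exponent $\tfrac12$ is essential: the Duminil-Copin--Raoufi--Tassion inequality \cite{DRT19}, in which revealments are controlled by a quantity of order $\sum_{k\le\langle x\rangle}\P_p(0\leftrightarrow\partial\Lambda_k)$ --- which stays bounded as $p\uparrow p_c$ --- yields only a rate of order $p_c-p$, i.e.\ a correlation length of order $(p_c-p)^{-1}$, whereas the correct order is $(p_c-p)^{-1/2}$. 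Obtaining the square-root gain requires a more efficient decision tree, following the methodology of \cite{Hutc20}; I would expect its revealments and the associated pivotal-edge sums to be controlled using \eqref{eq:twopointassumption} together with the Kozma--Nachmias one-arm bound \cite{KN11} and the Chatterjee--Hanson half-space two-point estimates \cite{CH20}, so that the effective revealment becomes of order $(\langle x\rangle\,\chi(p)^{1/2})^{-1}$ rather than $\langle x\rangle^{-1}$.

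Granting such an inequality, the theorem follows by integrating from $p$ to $p_c$: since $\chi'(p)\preceq\chi(p)^2$ by Russo's formula and the BK inequality, and $\chi(p)=\sum_x\tau_p(x)\to\infty$ as $p\uparrow p_c$ by \eqref{eq:twopointassumption}, one has $\chi(p)\succeq(p_c-p)^{-1}$ and hence $\int_p^{p_c}\chi(q)^{1/2}\,dq\succeq(p_c-p)^{1/2}$; therefore $\log\tau_{p_c}(x)-\log\tau_p(x)\succeq\langle x\rangle(p_c-p)^{1/2}$, and combining with $\tau_{p_c}(x)\preceq\langle x\rangle^{-(d-2)}$ from \eqref{eq:twopointassumption} gives \eqref{e:taudecay}. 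I expect the main obstacle to be precisely the construction and analysis of the randomized decision tree: one must ensure that it genuinely determines the connection event (or a sufficiently good proxy) while keeping the revealment of every edge as small as $(\langle x\rangle\,\chi(p)^{1/2})^{-1}$, which is the square-root sharpening of \cite{DRT19} needed to match the $(p_c-p)^{-1/2}$ correlation length. The remaining ingredients --- uniform-in-$p\le p_c$ forms of the critical one-arm and half-space estimates, and the BK-type bounds that handle the pivotal sums --- are more routine, but must be assembled with care.
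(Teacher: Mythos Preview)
Your outline has the right list of ingredients (OSSS, Kozma--Nachmias, Chatterjee--Hanson, the $(p_c-p)^{-1/2}$ correlation length) and the integration step at the end is fine, but the central claim --- that one can build a randomised decision tree computing $\{0\leftrightarrow x\}$ with maximal revealment of order $(\langle x\rangle\,\chi(p)^{1/2})^{-1}$ --- is precisely the hard part, and you have not supplied it. You yourself flag this as ``the main obstacle,'' and in fact no such decision tree for the connection event itself is known: any algorithm that decides $\{0\leftrightarrow x\}$ must in bad configurations explore essentially the whole cluster of $0$ or of $x$, and the natural random-hyperplane explorations (as in \cite{DRT19}) give revealments of order $\langle x\rangle^{-1}$ at best, not $\langle x\rangle^{-1}\chi^{-1/2}$. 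Invoking \cite{Hutc20} does not close this gap: the ghost-field trick there controls the tail of the \emph{cluster volume}, and the resulting differential inequality does not see the spatial scale $\langle x\rangle$ in the way your proposed inequality requires.

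The paper's proof does \emph{not} apply OSSS to $\{0\leftrightarrow x\}$. It introduces an intermediate object, the set $\mathcal P_0$ of \emph{pioneer edges} (those through which the cluster of $0$ first enters successive half-spaces), and establishes the following chain. First, the expected number of pioneers for the hyperplane at distance $n$ satisfies $P_{p_c}(n)\asymp 1$; this is the step that actually uses the Chatterjee--Hanson half-space estimates, via a delicate sub/supermultiplicativity argument. Combined with the Kozma--Nachmias one-arm bound this yields the tail $\P_{p_c}(|\mathcal P_0|\ge k)\preceq k^{-2/3}$. OSSS with a ghost field on \emph{edges} is then applied to $\mathbbm{1}(|\mathcal P_0|\ge k)$ paired with $\mathbbm{1}(\mathcal P_0\cap\mathcal G\neq\varnothing)$; the decision forest explores half-space clusters from green edges, so its revealments are governed by pioneer probabilities, not by full two-point functions. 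Integrating the resulting differential inequality gives $\E_{p_c-\eps}|\mathcal P_0|\preceq \eps^{-1/2}$, and the submultiplicativity of $n\mapsto P_p(n)$ turns this average bound into the pointwise exponential decay $P_p(n)\preceq e^{-c(p_c-p)^{1/2}n}$. Only at the very end does the two-point function enter: by BK, splitting any open path from $0$ to $x$ at its first crossing of the hyperplane at distance $\lfloor\langle x\rangle/2\rfloor$ gives $\tau_p(x)\preceq P_p(\lfloor\langle x\rangle/2\rfloor)\cdot\langle x\rangle^{-(d-2)}$, and \eqref{e:taudecay} follows. In short, the ``square-root sharpening'' you seek is obtained not by a better decision tree for the connection event, but by changing the random variable to which OSSS is applied.
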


Theorem~\ref{thm:2pt} is a partial substantiation, via a one-sided bound,
of the generally unproven guiding
principle in the scaling theory for critical phenomena in statistical mechanical models
on $\Z^d$ that two-point functions
near a critical point generically  have decay of the form
\begin{equation}
\label{e:Gscaling}
    \tau_p(x) \approx \frac{1}{\xvee^{d-2+\eta}} g(|x|/\xi(p))
\end{equation}
in some reasonable meaning for ``$\approx$'', when $\xvee$ is
of roughly the same order as
the correlation length $\xi(p)$
and $p$ is close
to its critical value $p_c$.
The universal critical exponent $\eta$ depends on the dimension, the correlation length
$\xi(p) \approx (1-p/p_c)^{-\nu}$
diverges as $p \to p_c$  with
a  dimension-dependent universal critical exponent $\nu$,
and $g$ is a function with rapid decay.
In high dimensions,
$\eta=0$ and $\nu = \frac 12$.
The role of \eqref{e:Gscaling} in
the derivation of scaling relations
between critical exponents, such as Fisher's relation $\gamma=(2-\eta)\nu$,
can be found
in \cite[Section~9.2]{Grim99}.

Let us now summarise how
Theorem~\ref{thm:2pt} compares
to previous results.
For high-dimensional percolation on $\Z^d$,
\eqref{eq:twopointassumption} is known in the more precise asymptotic form
\begin{equation}
\label{e:taupc}
    \tau_{p_c}(x) \sim A_\tau \frac{1}{\langle x\rangle^{d-2}} \qquad (x\to\infty)
\end{equation}
for some positive constant $A_\tau$, with an explicit error estimate  \cite{Hara08,HHS03}.
For all dimensions $d \ge 2$ and for $p<p_c$ there is exponential decay, in the sense that
the \emph{mass} (or \emph{inverse correlation length})
\begin{equation}
\label{eq:massdef}
    m(p) = -\lim_{n\to\infty} \frac 1n \log \tau_p(ne_1)
    = - \sup_{n\geq 1} \frac 1n \log \tau_p(ne_1),
\end{equation}
is strictly positive for $p<p_c$ \cite{Grim99}.
In fact, more is known in general, and it is shown in \cite[Proposition~6.47)]{Grim99}
that there is a constant $c$ such that
\begin{equation}
\label{eq:massbds}
    c p^d \frac{1}{\|x\|_1^{4d(d-1)}}e^{-m(p) \|x\|_1}
    \le
    \tau_p(x) \le e^{-m(p) \|x\|_\infty}
\end{equation}
for all $d \ge 2$, $p \in [0,1]$,  and  $x \neq 0$.
Hara \cite{Hara90} proved that in high-dimensional percolation on $\Z^d$ the mass satisfies the asymptotic formula
\begin{equation}
\label{e:masy}
    m(p) \sim A_m (p_c-p)^{1/2}
    \qquad ( p\to p_c^-)
\end{equation}
for some positive constant $A_m$.
With \eqref{eq:massbds}, this immediately implies that there is a
positive constant $c$ such that
\begin{equation}
\label{e:tauexpbd}
\tau_p(x)
    \leq \exp\left[ -c (p_c-p)^{1/2} \| x \|_\infty \right]
\end{equation}
for every $p<p_c$ and $x\in \Z^d$.  \cref{thm:2pt} improves this bound by a polynomial term which is believed to be sharp.
 No such estimate on the slightly subcritical two-point function had previously been proven for high-dimensional percolation on $\Z^d$.

  For weakly self-avoiding walk in dimension $d>4$, a result analogous to \cref{thm:2pt} was
proved only recently in \cite{Slad20_wsaw}; that proof does not extend to
percolation and our methods are different and do not extend to weakly self-avoiding walk.
On the basis of the asymptotic behaviour for the lattice Green function presented in
\cite{MS21}, we believe that the precise asymptotic behaviour of the subcritical
two-point function for fixed $p<p_c$ and for $d>6$ takes the form
\begin{equation}
\label{e:tauconjecture}
    \tau_p(x) \sim A_{p,\hat x}\; m(p)^{(d-3)/2}\frac{1}{|x|_p^{(d-1)/2}} e^{-m(p)|x|_p}
    \qquad
    (|x|_p \to \infty),
\end{equation}
with $|\cdot|_p$ a $p$-dependent norm on $\R^d$ (\emph{not} the $\ell_p$ norm)
which interpolates monotonically
between the $\ell_1$ and $\ell_2$ norms as $p$ increases over the interval $(0,p_c)$,
and with an amplitude $A_{p,\hat x}$ that approaches a nonzero constant (independent of the
direction $\hat x$) as $p\uparrow p_c$.
The polynomial factors in \eqref{e:tauconjecture}
can be rearranged as $(m(p)|x|_p)^{(d-3)/2} |x|_p^{-(d-2)}$,
so when $|x|_p$ is comparable to the
correlation length $m(p)^{-1}$ the conjectured asymptotic estimate \eqref{e:tauconjecture}
becomes consistent with \eqref{e:taudecay}.

\begin{rk-non} {\rm
It is impossible for the bound \eqref{e:taudecay} to hold for all $x$ if we
replace $c(p_c-p)^{1/2}$ by $m(p)$
in the exponent.  To see this, we recall that
the \emph{Ornstein--Zernike decay} $\tau_p(n,0,\ldots,0) \sim C_p n^{-(d-1)/2} e^{-m(p)n}$
was proved by Campanino, Chayes and Chayes \cite{CCC91} for $p<p_c$
in dimensions $d \ge 2$ (but without the
control  conjectured in
\eqref{e:tauconjecture} for the $p$-dependence of the constant $C_p$ as $p \to p_c$).
From this,  by taking $n$ large we see that \eqref{e:taudecay} can only hold if
the constant $c$ is such that $c(p_c-p)^{1/2}$
is strictly smaller than $m(p)$.}
\end{rk-non}

\medskip

\noindent\textbf{The one-arm probability.} Our second main result for percolation on $\Z^d$ concerns the probability that the cluster of the origin has a large radius in slightly subcritical percolation.
Recall that $\Lambda_r=\Lambda_r^d = [-r,r]^d \cap \Z^d$ is the box of side length $2r+1$ and $\partial \Lambda_r$ is its boundary.

\begin{theorem}
\label{thm:main1arm}
Let $d>6$ and
 suppose that \eqref{eq:twopointassumption} holds on $\Z^d$.
There exist positive constants $c$ and $C$ such that
\begin{equation}
 \frac{c}{r^2} \exp\left(- C (p_c-p)^{1/2} r \right) \leq \P_{p} \bigl( 0 \leftrightarrow \partial \Lambda_r\bigr) \leq   \frac{C}{r^2} \exp\left(- c (p_c-p)^{1/2} r \right)
\end{equation}
for every $p_c/2 \leq p \leq p_c$ and $r \geq 1$.
\end{theorem}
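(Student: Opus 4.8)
The plan is to deduce \cref{thm:main1arm} from \cref{thm:2pt} together with the critical one‑arm estimate $\P_{p_c}(0\leftrightarrow\partial\Lambda_r)\asymp r^{-2}$ of Kozma and Nachmias \cite{KN11}, inserting the exponential factor through a scale‑by‑scale renewal argument. Throughout write $\eps=p_c-p$ and $\ell=\eps^{-1/2}$, so that the mass obeys $m(p)\asymp\ell^{-1}$ by \eqref{e:masy} and the target exponent is $r(p_c-p)^{1/2}=r/\ell$. Since $p\mapsto\P_p(0\leftrightarrow\partial\Lambda_r)$ is non‑decreasing, for $r\le K\ell$ (any fixed $K$) the upper bound is immediate from \cite{KN11} by monotonicity, and the matching lower bound for $r\le K\ell$ is contained in the base case discussed below; so the real content is the regime $r\ge\ell$, where $r/\ell\ge1$.

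For the \emph{upper bound} the crux is a uniform non‑continuation estimate: there is $\rho\in(0,1)$, depending only on $d$ and $L$, such that
\[
\P_p\bigl(0\leftrightarrow\partial\Lambda_{s+\ell}\bigr)\;\le\;\rho\,\P_p\bigl(0\leftrightarrow\partial\Lambda_{s}\bigr)\qquad\text{for all }s\ge\ell .
\]
Iterating this from $s=\ell$ to $s=r$ and using monotonicity and the Kozma--Nachmias bound at scale $\ell$ gives $\P_p(0\leftrightarrow\partial\Lambda_r)\preceq\ell^{-2}\rho^{\,r/\ell}$; since $r\ge\ell$ we may write $\ell^{-2}=r^{-2}(r/\ell)^2$ and absorb the polynomial factor $(r/\ell)^2$ into a slightly smaller exponential rate, which yields $\P_p(0\leftrightarrow\partial\Lambda_r)\preceq r^{-2}e^{-cr/\ell}$ as claimed. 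To prove the non‑continuation estimate I would reveal the cluster of $0$ inside $\Lambda_s$ — an event measurable with respect to the edges with both endpoints in $\Lambda_s$ — and bound the conditional probability that some revealed boundary vertex connects to $\partial\Lambda_{s+\ell}$ through the independent configuration outside $\Lambda_s$; by \cref{thm:2pt} these exterior connection probabilities are $\preceq\ell^{-2}$ at scale $\ell$, while the half‑space two‑point function estimates of Chatterjee and Hanson \cite{CH20} control the effect of the boundary. I expect this step to be the main obstacle: one must make the gluing quantitative \emph{uniformly in $s$} without paying a polynomial‑in‑$s$ price for a union bound over the (possibly large) set of boundary vertices reached by the cluster, which is precisely why the cluster‑revealing structure has to be used rather than a naive sum of two‑point functions. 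An alternative, arguably cleaner, route to the exponential factor would be to establish an OSSS‑type differential inequality for $\P_p(0\leftrightarrow\partial\Lambda_r)$ — in the spirit of the differential inequality behind \cref{thm:2pt} — strong enough that, integrated against $m$ from $p$ to $p_c$, it produces the factor $e^{-crm(p)}=e^{-cr/\ell}$ on top of the critical bound.

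For the \emph{lower bound} I would combine a second‑moment argument at scale $\ell$ with the same renewal idea. First, below the correlation length the subcritical two‑point function is comparable to the critical one: differentiating $p\mapsto\tau_p(x)$, bounding the pivotal contribution by a bubble diagram, and using \eqref{eq:twopointassumption} together with the finiteness of the triangle diagram gives $\tau_{p_c}(x)-\tau_p(x)\preceq\eps\,\langle x\rangle^{-(d-4)}$, hence $\tau_p(x)\asymp\langle x\rangle^{-(d-2)}$ whenever $\langle x\rangle\le c_0\ell$. Applying the Cauchy--Schwarz second‑moment inequality to $Z=|\mathbf{C}(0)\cap(\Lambda_{2c_0\ell}\setminus\Lambda_{c_0\ell})|$, where $\mathbf{C}(0)$ is the cluster of the origin, with first moment $\asymp\ell^{2}$ from this comparison and second moment $\preceq\ell^{6}$ from the tree‑graph inequality and the triangle condition, yields $\P_p(0\leftrightarrow\partial\Lambda_{c_0\ell})\succeq\ell^{-2}$. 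One then chains: conditional on reaching $\partial\Lambda_s$, a half‑space second‑moment argument — fed again by \cite{CH20} and by the fact that the cluster reaches $\partial\Lambda_s$ inside a ``blob'' of scale $\ell$ — shows that the cluster extends to $\partial\Lambda_{s+c_0\ell}$ with probability bounded below uniformly in $s\ge\ell$, so that $\P_p(0\leftrightarrow\partial\Lambda_r)\succeq\ell^{-2}e^{-Cr/\ell}\ge r^{-2}e^{-Cr/\ell}$. The delicate point here mirrors the one in the upper bound, namely the uniform‑in‑$s$ control of the conditional extension probability, and in particular the need to use the multiplicity of launch points on $\partial\Lambda_s$ rather than a single one (a single point would only give the far weaker per‑step factor $\ell^{-O(1)}$, contributing an unwanted $\log\ell$ to the exponent). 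A more direct alternative is to run the second‑moment bound at scale $r$ with $Z=|\mathbf{C}(0)\cap(\Lambda_{2r}\setminus\Lambda_r)|$ and $\E_p Z^2\preceq r^{6}e^{-cr/\ell}$ from the tree‑graph inequality and \cref{thm:2pt}, but this needs the companion lower bound $\tau_p(x)\succeq\langle x\rangle^{-(d-2)}e^{-C\langle x\rangle/\ell}$ on the full‑range subcritical two‑point function, which does not follow from the FKG chaining bound $\tau_p(x)\ge\tau_p(y)\tau_p(x-y)$ alone since that loses polynomial factors at each step; I would therefore favour the renewal route, where only the below‑correlation‑length comparison and a uniform half‑space gluing estimate are required.
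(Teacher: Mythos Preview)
Your proposal correctly identifies the essential ingredients --- the Kozma--Nachmias critical one-arm bound, the Chatterjee--Hanson half-space estimates, a second-moment argument for the lower bound, and an OSSS-type input for the exponential decay --- and you also correctly isolate the real difficulty: obtaining the renewal/gluing step \emph{uniformly in $s$} without losing a polynomial factor from a union bound over the many boundary vertices the cluster may reach. However, you do not actually resolve this difficulty. The revealed cluster in $\Lambda_s$ typically touches $\asymp s^3$ vertices of $\partial\Lambda_s$ at criticality (first moment $\asymp s$ divided by one-arm $\asymp s^{-2}$), so the naive conditional union bound gives $s^3\ell^{-2}$, which is useless for $s\gg\ell$; your sketch offers no mechanism to beat this. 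The OSSS alternative you mention, applied directly to $\P_p(0\leftrightarrow\partial\Lambda_r)$, would yield a Menshikov-type inequality that produces exponential decay but not the correct $r^{-2}$ prefactor. The same issue recurs in your lower-bound chaining, where the ``uniform half-space gluing estimate'' is asserted but not supplied.

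The paper resolves exactly this obstruction by introducing a new organizing object: \emph{pioneer edges}, the edges through which the cluster of the origin first enters each halfspace $H_n$, and their expected count $P_p(n)=\E_p|\cP_0(n)|$. The point is that a path from $0$ to $H_{2n}$ decomposes at a \emph{single} pioneer edge in $\cP_0(n)$ into two disjoint witnesses, so BK gives $\P_p(0\leftrightarrow H_{2n})\le P_p(n)\cdot\max_b\P_p(b\leftrightarrow H_{2n})\preceq n^{-2}P_p(n)$; there is no union bound over boundary vertices, and all the work goes into showing $P_p(n)\preceq e^{-c(p_c-p)^{1/2}n}$. This is where Chatterjee--Hanson actually enters (to prove $P_{p_c}(n)\asymp 1$ via a sub/supermultiplicative argument on halfspace connections), and where OSSS enters (applied to the \emph{total} number of pioneers, yielding a differential inequality of Menshikov type for $\P_p(|\cP_0|\ge k)$, from which the subcritical bound on $P_p(n)$ follows by submultiplicativity). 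The lower bound is then a second-moment computation on the number $Y_r$ of vertices in the hyperplane $S_r$ reached through the slab $S_{-r,r}$, with both moments expressed in terms of $P_p(\cdot)$; this replaces your two-stage ``base case plus chaining'' by a single estimate and again avoids any uniform-in-$s$ gluing. In short, the missing idea in your proposal is precisely the pioneer-edge bookkeeping that converts the problematic spatial renewal into a clean BK factorisation.
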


The restriction $p\geq p_c/2$ appearing
in Theorem~\ref{thm:main1arm} is only needed for the lower bound, and $p_c/2$ could be replaced by any constant strictly between $0$ and $p_c$.
The $p=p_c$ case of this theorem was proven by Kozma and Nachmias \cite{KN11} and is used crucially in our proof.
By \eqref{e:tauexpbd} and a union bound,
\begin{equation}
\P_{p} \bigl( 0 \leftrightarrow \partial \Lambda_r\bigr)
 \le 2d(2r+1)^{d-1}
\exp\left(- c (p_c-p)^{1/2} r \right),
\end{equation}
for every $p<p_c$ and $r\geq 2$; the content of the upper bound of \cref{thm:main1arm} is to identify the correct polynomial prefactor. Similar theorems have been established for the distribution of the \emph{volume} and \emph{intrinsic radius} of slightly subcritical percolation clusters in \cite[Section 4]{Hutc20_slightly}; these proofs apply to arbitrary transitive graphs satisfying the triangle condition and are much easier to prove than \cref{thm:main1arm}. See also \cite[Section 5]{Hutc20_slightly} for an overview what is expected to hold for \emph{slightly supercritical} percolation in high dimensions.

\begin{rk-non} {\rm
Shortly after this paper first appeared on the arXiv, we learned of independent work of Chatterjee, Hanson, and Sosoe \cite{CHS22} containing an alternative proof of the subcritical one-arm estimate of \cref{thm:main1arm}. Their work is largely orthogonal to ours, using very different methods to establish \cref{thm:main1arm} and not considering the slightly subcritical two-point function or finite-size scaling on the torus. Their work also establishes several further new results on the chemical distance and the number of spanning clusters in a box, which we do not study here.  Six months later, a third proof of the upper bound of
\cref{thm:main1arm} appeared in \cite{Vann22}.}
\end{rk-non}

\subsection{Results for the torus}
\label{e:sec:torus}

Percolation on the high-dimensional torus has received much attention in
recent years \cite{BCHSS05a,BCHSS05b,HHI07,HHII11,HH17book,HS14}, with considerable related
work on hypercube percolation including \cite{BCHSS04c,HH17book,HN17,HN20}.
That work has concentrated on the torus susceptibility and on questions with a flavour
like those in the literature on the Erd\H{o}s--R\'enyi random graph such as the
cluster size distribution.  It was based on a \emph{torus triangle condition}
and required a lace expansion analysis directly on the torus, with the focus on
an intrinsically defined torus critical point which was later related to the
critical point $p_c$ for $\Z^d$. In the following, we analyse torus percolation
in the vicinity of $p_c$ directly, with our main tool being the near-critical
bound on the $\Z^d$ two-point function provided by Theorem~\ref{thm:2pt}.
At the end of this section, we will discuss the intrinsically defined critical point
and the torus triangle condition.

Our principal focus here is on the torus two-point function
$\tau_p^\T(x) := \P_p^\T(0\conn x)$ (for $x \in \T_r^d$)
and its ``plateau.''
Despite the substantial progress on high-dimensional torus percolation,  a
detailed analysis of the behaviour of the torus two-point function $\tau^\T_p(x)$
within and below the critical window of width $V^{-1/3}$ about $p_c$
has been missing until now.
A sizeable physics literature for related models such as the Ising model
(in dimensions $d>4$) predicts
the existence of a ``plateau'' for the torus two-point function, namely that
within the critical window
$\tau^\T_p(x)$ decays like the $\Z^d$ two-point function $\tau_{p_c}(x)$
for a certain volume-dependent range of $x$ values, but beyond this range $\tau^\T_p(x)$
levels off at an approximately constant value which exceeds $\tau_{p_c}(x)$.
For the Ising model this is discussed, e.g., in \cite{Papa06,ZGFDG18,ZGDG20}
and references therein.  Different behaviour is expected for free boundary conditions,
and has recently been proved for the Ising model in \cite{CJN21}.
The plateau has recently been proven to exist for the simple random walk two-point
function (lattice Green function) on the torus
in all dimensions $d>2$ \cite{Slad20_wsaw,ZGDG20}
and for weakly self-avoiding walk on the torus in dimensions $d>4$ \cite{Slad20_wsaw}.
The plateau is applied in an essential way to analyse the
weakly self-avoiding walk on a torus in dimensions $d>4$ in \cite{MS21}.
The differences between free, bulk and periodic boundary conditions for percolation have
been emphasised in \cite{Aize97}, where the focus is on the maximal cluster size rather than
the two-point function plateau; see also \cite[Section~13.6]{HH17book}.

Before stating our results on the torus two-point function, let us first recall some relevant background on the susceptibility. Let $\chi(p):=\sum_{x\in \Z^d}\tau_p(x)$ be the $\Z^d$ susceptibility, which is known  \cite{AN84,FH17,HS90a} to satisfy the mean-field asymptotics
\begin{equation}
\label{e:chiasy}
    \chi(p) \asymp \frac{1}{1-p/p_c} \qquad ( p \to p_c^-)
\end{equation}
for high-dimensional percolation on $\Z^d$.

By using the estimate \eqref{e:taudecay} on the $\Z^d$ two-point function,
we prove the following theorem.
For notational
convenience, we sometimes evaluate a $\Z^d$ two-point function at a point $x\in\T_r^d$,
with the understanding that in this case we regard
$x$ as a point in $[-r/2,r/2)^d \cap \Z^d$.  This occurs in the statement of
Theorem~\ref{thm:plateau}.

\begin{theorem}[The two-point function plateau]
\label{thm:plateau}
Let $d>6$ and suppose that
\eqref{eq:twopointassumption} holds  on $\Z^d$.
\begin{itemize}
\item \textbf{\emph{Below the scaling window:}} There exist positive constants $c_1$ and $C_1$ depending only on $d$ and $L$
    such that
\begin{equation}
\label{e:plateau_upper}
 \tau^\T_p(x) \leq \tau_p(x) + C_1\frac{\chi(p)}{V} \exp\left[-c_1 m(p)r\right]
\end{equation}
for every $r>2$, every $x \in \T_r^d$,
and every
 $p \in [0, p_c)$.
Moreover, there exist positive constants $A_1$, $A_2$, $c_2$, and $M$ such that
 \begin{equation}
\label{e:plateau_lower}
 \tau^\T_p(x) \geq \tau_p(x) +  c_2 \frac{\chi(p)}{V}
\end{equation}
for every $r>2$, every $x \in \T_r^d$ with $\|x\|_\infty> M$, and every
 $p \in [p_c-A_1 V^{-2/d},p_c-A_2 V^{-1/3}]$.
  \item
\textbf{\emph{Inside the scaling window:}}
For each $0<\delta\leq 1$ and $A >0$, there exist positive constants
$r_0$ and $C_3$  depending only on $d$,
 $L$,
$\delta$, and $A$
such that
\begin{equation}
\label{e:plateaupc_upper}
\tau_p^\T(x) \leq (1+\delta)\tau_{p_c}(x) + \frac{C_3}{V^{2/3}}
\end{equation}
for every
$p \in [0,p_c+ A  V^{-1/3}]$,
$r>r_0$, and $x\in\T_r^d$.
In fact, the upper bound \eqref{e:plateaupc_upper} holds for $p\le p_c$ with $\delta=0$.
In addition, there exists a positive constant $M$ depending only on $d$ and $L$,
and a positive $c_3$ depending on $d,L,$ and $A$ such that
\begin{equation}
\label{e:plateaupc_lower}
    \tau_p^\T(x) \geq (1-\delta)\tau_{p_c}(x)+\frac{c_3}{V^{2/3}}
\end{equation}
for every $r>r_0$, every $x \in \T_r^d$ with $\|x\|_\infty> M$, and every
$p \in [p_c- A V^{-1/3},p_c+ A V^{-1/3}]$.
\end{itemize}
\end{theorem}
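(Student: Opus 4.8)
The plan is to reduce all four inequalities to estimates on $\Z^d$ via a \emph{wrapping decomposition}, to evaluate the resulting sums with \cref{thm:2pt} and \cref{thm:main1arm}, and then to cross the critical window with a differential‑inequality bootstrap. Fix $\rho$ slightly below $r/2$ (slightly below $r/2-L$ in the spread‑out case), small enough that the box $\Lambda_\rho$ together with its graph structure is a subgraph of $\T_r^d$ on which torus and $\Z^d$ percolation can be identified. Split $\{0\conn x\}$ on the torus according to whether $C_\T(0)\subseteq\Lambda_\rho(0)$ or not. On the first event the two configurations restricted to $\Lambda_\rho$ have the same law, so its probability is at most $\tau_p(x)$ (and the event is empty when $\|x\|_\infty>\rho$). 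On the second event $C_\T(0)$ reaches $\partial\Lambda_\rho(0)$, and one estimates $\P_p^\T(0\conn x,\ 0\conn\partial\Lambda_\rho(0))$ by a tree‑graph/BK decomposition at a branch point, using the one‑arm bound \cref{thm:main1arm} for the arm that reaches the boundary, \cref{thm:2pt} for the portions that return to $x$, and the Chatterjee--Hanson half‑space two‑point estimate to control the interaction of the returning arm with the hyperplanes it has already crossed. Summing the contributions over the fibre $x+r\Z^d$ and comparing $\sum_{z\neq0}\langle x+rz\rangle^{-(d-2)}e^{-cm(p)\langle x+rz\rangle}$ with an integral, together with $\chi(p)\asymp(p_c-p)^{-1}\asymp m(p)^{-2}$ and $V=r^d$, yields \eqref{e:plateau_upper}.

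Given \eqref{e:plateau_upper}, the estimate \eqref{e:plateaupc_upper} for $p\le p_c-c_0V^{-1/3}$ follows by inserting $\chi(p)\preceq V^{1/3}$ and $e^{-c_1m(p)r}\le1$. To reach $p=p_c+AV^{-1/3}$ — which by monotonicity of $p\mapsto\tau_p^\T$ suffices for all smaller $p$ — I would bootstrap across the window. From Russo's formula and the BK inequality one obtains $\frac{d}{dp}\tau_p^\T(x)\preceq(\tau_p^\T\ast\tau_p^\T)(x)$, and for $d>6$ the self‑convolution on $\T_r^d$ of $\langle\cdot\rangle^{-(d-2)}+V^{-2/3}$ is $\preceq\langle x\rangle^{-(d-4)}+V^{-2/3}$, so integrating over an interval of length $\asymp V^{-1/3}$ increases the plateau bound by at most $\delta\langle x\rangle^{-(d-2)}+CV^{-2/3}$; absorbing the term $V^{-1/3}\langle x\rangle^{-(d-4)}$ is exactly where $d>6$ is used. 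Running this over $O_A(1)$ consecutive intervals, while simultaneously keeping $\chi^\T(p)\asymp V^{1/3}$ (the upper bound from $\frac{d}{dp}\chi^\T(p)\le\chi^\T(p)^2$, the lower bound from monotonicity anchored at $p_c-c_0V^{-1/3}$, where the plateau bounds are already available, so the convolution estimate stays valid at each step), produces \eqref{e:plateaupc_upper}; $C_3$ depends on $A$ through the number of steps, and the accumulated $\delta$ is made as small as wished by shrinking the per‑step increment at the cost of enlarging $r_0$.

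For the lower bounds, the term $\tau_p(x)$ in \eqref{e:plateau_lower} (respectively $(1-\delta)\tau_{p_c}(x)$ in \eqref{e:plateaupc_lower}) comes from restricting to connections inside an embedded box $\Lambda_\rho$ with $\rho$ just below $r/2$: this gives $\tau_p^\T(x)\ge\tau_p(x)-\P_{\Z^d}(0\conn x,\ 0\conn\partial\Lambda_\rho)$, and the subtracted probability is bounded as in the upper‑bound analysis and is at most $\delta\tau_p(x)$ once $\|x\|_\infty\preceq\delta^{1/2}r$, whereas for larger $\|x\|_\infty$ both $\tau_{p_c}(x)$ and this error are $\preceq V^{-(d-2)/d}\ll V^{-2/3}$ (using $d>6$) and are absorbed into the plateau term. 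For the plateau term $c\,\chi(p)/V$ (respectively $cV^{-2/3}$) one uses that, for $p$ in the stated range, a positive‑probability event forces $|C_\T(0)|$ to be of the corresponding order (via the lower bound of \cref{thm:main1arm} and $\chi(p)\asymp(p_c-p)^{-1}$; the lower endpoint $p_c-A_1V^{-2/d}$ is where this order first becomes comparable to $r^2$, i.e.\ where $m(p)r\asymp1$), together with the fact that such clusters are spatially well spread — a cluster of this size has extrinsic radius of order $|C|^{1/4}\succeq r$ for $d>6$, so it winds around the torus and contains any prescribed far‑away point with probability of the claimed order. Passing from the averaged statement $V^{-1}\sum_x\tau_p^\T(x)\asymp V^{-2/3}$ to this pointwise statement for every $x$ with $\|x\|_\infty>M$ requires a second‑moment estimate on the occupied vertices of a large cluster.

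The main obstacle I anticipate is the boundary‑interaction term in the wrapping decomposition: bounding the probability that $C_\T(0)$, after leaving $\Lambda_\rho(0)$, returns to a prescribed point $x$, \emph{without} invoking $\tau_p^\T$ at far‑away displacements, which would be circular. This is precisely where the Chatterjee--Hanson half‑space estimate and \cref{thm:2pt} are combined, and where the argument must be set up carefully so that the constants stay admissible uniformly over $p\in(0,p_c]$ — in particular inside the window, where $m(p)r$ need not be large. The second delicate point, flagged above, is the pointwise lower bound in the plateau regime.
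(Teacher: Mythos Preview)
Your proposal diverges from the paper at every step; some of your routes could be made to work with effort, but the lower bound has a genuine gap.

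\textbf{Upper bound below the window.} You reach for a branch-point BK decomposition, the one-arm bound, and even the Chatterjee--Hanson half-space estimate. None of this is needed. The paper simply invokes the Benjamini--Schramm/Heydenreich--van der Hofstad coupling bound
\[
\tau_p^\T(x)\le\sum_{u\in\Z^d}\tau_p(x+ru)=\tau_p(x)+\sum_{u\neq 0}\tau_p(x+ru),
\]
inserts \cref{thm:2pt}, and sums using $\|x+ru\|_\infty\ge\frac12 r\|u\|_\infty$ together with $m(p)^{-2}\preceq\chi(p)$. This is three lines; your ``boundary-interaction term'' worry and the Chatterjee--Hanson input are irrelevant here.

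\textbf{Upper bound inside the window.} Your Russo/BK bootstrap $\frac{d}{dp}\tau_p^\T\preceq\tau_p^\T*\tau_p^\T$ could be made to work, but controlling the constant through the self-referential iteration is fiddly. The paper avoids this entirely: at $p=p_c$ it splits torus connections according to whether they lift to a $\Z^d$ path of diameter $\le 3Mr$ or $\ge 3Mr$, bounds the first part by $\tau_{p_c}(x)+CM^2 r^{-(d-2)}$ via the coupling and the second by $C(Mr)^{-4}$ via the \emph{extrinsic} one-arm estimate (squared, using BK on the torus), then optimises $M=r^{(d-6)/6}$. For $p\in(p_c,p_c+AV^{-1/3}]$ it uses the \emph{intrinsic} distance instead: it splits according to $d_{\mathrm{int}}(0,x)\le 3\gamma$ or $>3\gamma$ with $\gamma\asymp\delta V^{1/3}$, uses an elementary $p\mapsto q$ coupling to reduce to $p_c$, and applies the torus intrinsic one-arm bound of Kozma--Nachmias. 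No differential inequality and no bootstrap.

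\textbf{Lower bounds --- the real gap.} Your plan for the plateau term is to argue that with positive probability $|C_\T(0)|$ is large, such clusters ``wind around the torus'', and a second-moment estimate upgrades the averaged statement to a pointwise one. This is not a proof sketch; it is a hope, and you correctly flag it as delicate. The paper's route is completely different and entirely diagrammatic. Write
\[
\tau_p^\T(x)-\tau_p(x)=\psi_{r,p}(x)-\bigl(\psi_{r,p}(x)-\psi_{r,p}^\T(x)\bigr),\qquad \psi_{r,p}(x)=\sum_{u\neq 0}\tau_p(x+ru).
\]
The first term is lower bounded by restricting to $1\le\|u\|_\infty\le R$ and using $\tau_p(y)\ge c\langle y\rangle^{-(d-2)}-C(p_c-p)\langle y\rangle^{-(d-4)}$ (the latter from Russo/BK integrated over $[p,p_c]$); optimising $R\asymp(p_c-p)^{-1/2}r^{-1}$ gives $\psi_{r,p}(x)\ge c_\psi\chi(p)/V$ for $p\ge p_c-A_2 r^{-2}$. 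The second term is upper bounded via the Heydenreich--van der Hofstad coupling inequality
\[
\psi_{r,p}(x)-\psi_{r,p}^\T(x)\le\tfrac12\sum_{u}\sum_{v\neq u}\P(0\conn x+ru,\,x+rv)+\sum_u\P(\{0\conn x+ru\}\cap\{0\xleftrightarrow[\T]{}x\}^c),
\]
and both terms on the right are shown, by BK tree-graph bounds and summation over the torus fibres, to be $\preceq\frac{\chi(p)}{V}\bigl(\triangle_p(x)+\chi(p)^3/V\bigr)$. Taking $\|x\|_\infty\ge M$ makes $\triangle_p(x)$ small, and taking $p\le p_c-A_1V^{-1/3}$ makes $\chi(p)^3/V$ small; hence $\psi_{r,p}(x)-\psi_{r,p}^\T(x)\le\tfrac12 c_\psi\chi(p)/V$, and the lower bound follows. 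No second moments, no ``winding'' heuristics, and the role of the two endpoints $A_1V^{-2/d}$ and $A_2V^{-1/3}$ falls out transparently. The window lower bound then follows from the below-window lower bound by monotonicity in $p$ and the comparison $\tau_{p_c}(x)-\tau_{p'}(x)\preceq (p_c-p')\langle x\rangle^{-(d-4)}$.
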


With a choice $A\ge A_2$, the above theorem gives upper and lower bounds on
the two-point function throughout the range $p_c-A_1V^{-2/d} \le p \le p_c  +A V^{-1/3}$.
Below the window the bounds involve $\chi(p)/V$, which is infinite at $p_c$, whereas
within the window this constant term is replaced by $V^{-2/3}$.

The upper bound of \eqref{e:plateau_upper} is essentially an immediate consequence of
\eqref{e:taudecay}.  The lower bound also uses \eqref{e:taudecay},
but requires a more involved argument inspired by the method
used for weakly self-avoiding walk in \cite{Slad20_wsaw}.
For the estimates inside the scaling window, we also use the
critical one-arm result
of Kozma and Nachmias \cite{KN11}.

We emphasise that the susceptibility $\chi(p)$ appearing in Theorem~\ref{thm:plateau} is the
susceptibility for $\Z^d$, not for the torus.
The upper bound \eqref{e:plateaupc_upper} at $p=p_c$ and with $\delta=0$ is proved
in \cite[Theorem~1.7]{HS14};  we complement the upper bound with a lower bound of
the same order, and extend these bounds through the entire scaling window.
The following corollary provides a more compact though less precise version of \eqref{e:plateaupc_upper}--\eqref{e:plateaupc_lower}.

\begin{cor}
Let $d>6$ and suppose that
\eqref{eq:twopointassumption} holds  on $\Z^d$.  Then, for all $A>0$  there exists $r_0$ such that if $r>r_0$ then
\begin{equation}
\label{e:plateaupT}
    \tau_{p}^\T(x) \asymp_A \frac{1}{\langle x\rangle^{d-2}} + \frac{1}{V^{2/3}}
\end{equation}
for every $x \in \T_r^d$ and every
$p \in [p_c- A V^{-1/3},p_c+ A V^{-1/3}]$.
\end{cor}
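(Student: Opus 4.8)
The plan is to read the corollary off directly from the ``inside the scaling window'' part of \cref{thm:plateau}, namely the upper bound \eqref{e:plateaupc_upper} and the lower bound \eqref{e:plateaupc_lower}, together with the two-point function estimate \eqref{eq:twopointassumption}. First I would fix $A>0$ and apply \cref{thm:plateau} with this $A$ and with the fixed choice $\delta=\tfrac12$, obtaining constants $r_0$, $C_3$, $c_3$ (depending only on $d$, $L$, $A$) and $M$ (depending only on $d$ and $L$). I would then enlarge $r_0$ if necessary so that for all $r>r_0$ one has both $A V^{-1/3}\le p_c/2$ (so that $p\ge p_c/2$ throughout the window $[p_c-AV^{-1/3},p_c+AV^{-1/3}]$) and $r>2M$ (so that the torus $\ell_\infty$-norm agrees with that of the canonical representative in $[-r/2,r/2)^d$).

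For the upper bound, \eqref{e:plateaupc_upper} gives $\tau_p^\T(x)\le \tfrac32\,\tau_{p_c}(x)+C_3 V^{-2/3}$ for all $x\in\T_r^d$ and all $p$ in the window, and \eqref{eq:twopointassumption} gives $\tau_{p_c}(x)\preceq \langle x\rangle^{-(d-2)}$; combining the two yields $\tau_p^\T(x)\preceq_A \langle x\rangle^{-(d-2)}+V^{-2/3}$ with no restriction on $x$. For the matching lower bound I would distinguish two cases. When $\|x\|_\infty>M$, the lower bound \eqref{e:plateaupc_lower} gives $\tau_p^\T(x)\ge \tfrac12\,\tau_{p_c}(x)+c_3 V^{-2/3}$, and the lower bound in \eqref{eq:twopointassumption}, $\tau_{p_c}(x)\succeq \langle x\rangle^{-(d-2)}$, then gives $\tau_p^\T(x)\succeq_A \langle x\rangle^{-(d-2)}+V^{-2/3}$.

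The only case not covered by \cref{thm:plateau} is the bounded regime $\|x\|_\infty\le M$, which I would handle by hand. Since $M$ depends only on $d$ and $L$, the torus graph distance from $0$ to $x$ is at most a constant $K=K(d,L)$ (for instance $K=dM$ using nearest-neighbour steps), so revealing one fixed shortest path gives $\tau_p^\T(x)\ge p^K\ge (p_c/2)^K$, a positive constant depending only on $d$ and $L$. Since also $\tau_p^\T(x)\le 1$, while $\langle x\rangle^{-(d-2)}+V^{-2/3}$ lies between $M^{-(d-2)}$ and $2$ on this range, we get $\tau_p^\T(x)\asymp 1\asymp \langle x\rangle^{-(d-2)}+V^{-2/3}$ with constants depending only on $d$ and $L$. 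Assembling the three cases and checking that all implied constants depend only on $d$, $L$, and $A$ then gives $\tau_p^\T(x)\asymp_A \langle x\rangle^{-(d-2)}+V^{-2/3}$ for every $x\in\T_r^d$ and every $p$ in the window, as claimed. I do not expect any genuine obstacle here: the only points requiring a little care are bookkeeping the constants and noticing that the small-$x$ regime lies outside the scope of \eqref{e:plateaupc_lower} and so must be treated separately.
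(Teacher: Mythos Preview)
Your proposal is correct and follows essentially the same approach as the paper: both obtain the upper bound from \eqref{e:plateaupc_upper} combined with \eqref{eq:twopointassumption}, the lower bound for $\|x\|_\infty>M$ from \eqref{e:plateaupc_lower} combined with \eqref{eq:twopointassumption}, and the remaining case $\|x\|_\infty\le M$ by the elementary path-of-length-$\preceq dM$ argument giving $\tau_p^\T(x)\ge (p_c/2)^{dM}$. The bookkeeping you flag (enlarging $r_0$ so that $AV^{-1/3}\le p_c/2$ and so that the bounded-$x$ regime is handled) is exactly what the paper does as well.
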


\begin{proof}
The upper bound follows immediately from the upper bound \eqref{e:plateaupc_upper}
together with our assumption \eqref{eq:twopointassumption}.

The lower bound also follows immediately from \eqref{e:plateaupc_lower}
if $\|x\|_\infty > M$, once we assume that $r$ is sufficiently large that
$AV^{-1/3} \le A_1V^{-2/d}$.
If instead $\|x\|_\infty \le M$ then it
is joined to the origin by a path of length at most $Md$. Therefore,
if we choose $r$ large enough that $AV^{-1/3} \le p_c/2$ then we have that
$\tau_{p}^\T(x) \ge (p_c/2)^{Md} \ge (p_c/2)^{Md}\langle  x \rangle^{-(d-2)}$.  If we take $r$ large enough that $V^{-2/3} <M^{d-2}$ then, for $\|x\|_\infty \le M$ we also find that
\begin{equation}
    \tau_{p}^\T(x) \ge \frac 12 \Big( \frac{p_c}{2} \Big)^{Md}
    \Big( \frac{1}{\langle x\rangle^{d-2}} + \frac{1}{V^{2/3}} \Big).
\end{equation}
This completes the proof.
\end{proof}

 Consequently, for
$\langle x\rangle^{d-2} < V^{2/3}$ we have $\tau_{p}^{\T}(x) \asymp \langle x\rangle^{-(d-2)}$, whereas for
$\langle x\rangle^{d-2} > V^{2/3}$ we have $\tau_{p}^{\T}(x) \asymp V^{-2/3}$.  This is the plateau:
the torus
two-point function levels off at an approximately constant value once $x$ is large enough.

There is in fact a hierarchy of plateaux extending \eqref{e:plateaupT}.
Consider $p = p_c - V^{-a}$ with $a \in (\frac 2d, \frac 13]$.
By \eqref{e:chiasy} $\chi(p) \asymp V^a$, and by \eqref{e:masy}
$m(p)r \asymp V^{\frac 1d -\frac a2 } \to 0$
as $r \to \infty$.
For such $p$, the plateau effect occurs as soon as  $\xvee^{d-2} \succeq V^{1-a}$.
When $a=\frac 2d$ the constant terms in \eqref{e:plateau_upper}--\eqref{e:plateau_lower}
are of order $r^{-(d-2)}$, which is the smallest order that $\tau_p(x)$ can achieve
for $x \in \T_r^d$.  If $a< \frac 2d$ then
$m(p)r \to \infty$
and the plateau effect is absent.

The torus susceptibility is defined by $\chi^\T(p)= \sum_{x\in \T_r^d} \tau^\T_p(x)$.
The following corollary of Theorem~\ref{thm:plateau} shows that
$\chi^\T(p) \asymp V^{1/3}$ for $p$ in the scaling window.
It shows that the correct transfer of the bounds on the two-point
in Theorem~\ref{thm:plateau}
from below the window into the window is achieved by replacing the $\Z^d$ susceptibility
by the torus susceptibility.
The corollary
reproduces a result of \cite{BCHSS05a,BCHSS05b} via quite different methods, and without
any torus lace expansion.

\begin{cor}
\label{cor:chiT}
Let $d>6$ and suppose that
\eqref{eq:twopointassumption} holds  on $\Z^d$.  For
any $A >0$ there exists $r_0$ such that
\begin{equation}
    \chi^\T(p) \asymp_A V^{1/3},
\end{equation}
for all $r>r_0$ and all $p\in [p_c-A V^{-1/3},p_c+A V^{-1/3}]$.
\end{cor}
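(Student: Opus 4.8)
The plan is to obtain \cref{cor:chiT} by simply integrating the pointwise plateau estimate over the torus. By \eqref{e:plateaupT} (the corollary stated just above), given $A>0$ there is $r_0$ so that for all $r>r_0$ and all $p\in[p_c-AV^{-1/3},p_c+AV^{-1/3}]$ we have
\begin{equation}
\tau_p^\T(x) \asymp_A \frac{1}{\langle x\rangle^{d-2}} + \frac{1}{V^{2/3}}
\end{equation}
uniformly in $x\in\T_r^d$. Summing over the $V=r^d$ vertices of $\T_r^d$ and using that $\chi^\T(p)=\sum_{x\in\T_r^d}\tau^\T_p(x)$ together with the fact that summation preserves the relation $\asymp_A$, it suffices to show that $\sum_{x\in\T_r^d}\bigl(\langle x\rangle^{-(d-2)}+V^{-2/3}\bigr)\asymp V^{1/3}$ for $r$ large.

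First I would dispose of the constant term: there are exactly $V$ vertices, so $\sum_{x\in\T_r^d}V^{-2/3}=V\cdot V^{-2/3}=V^{1/3}$. This already gives the lower bound $\chi^\T(p)\succeq_A V^{1/3}$ with no further work. Next I would bound the power-law term. The number of $x\in\T_r^d$ with $\langle x\rangle=k$ is $\asymp k^{d-1}$ for $1\le k\le \lfloor r/2\rfloor$, and the term $x=0$ contributes $O(1)$, so
\begin{equation}
\sum_{x\in\T_r^d}\frac{1}{\langle x\rangle^{d-2}} \asymp 1 + \sum_{k=1}^{\lfloor r/2\rfloor} k^{d-1}\cdot k^{-(d-2)} = 1 + \sum_{k=1}^{\lfloor r/2\rfloor} k \asymp r^2 = V^{2/d}.
\end{equation}
Since $d>6$ we have $2/d<1/3$, hence $V^{2/d}\le V^{1/3}$ for all $V\ge1$. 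Combining, $\chi^\T(p)\preceq_A V^{2/d}+V^{1/3}\preceq_A V^{1/3}$, which with the lower bound gives $\chi^\T(p)\asymp_A V^{1/3}$.

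There is no substantial obstacle in this argument; it is a direct consequence of the plateau. The only points meriting a little care are that the implied constants in \eqref{e:plateaupT} are uniform in $p$ across the window (which is precisely what the notation $\asymp_A$ records) and that the comparison $V^{2/d}\preceq V^{1/3}$ relies on $d>6$, the same dimension restriction in force throughout the paper; one also takes $r$ at least as large as the threshold $r_0$ supplied by \eqref{e:plateaupT}.
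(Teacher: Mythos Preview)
Your proof is correct and follows essentially the same approach as the paper: sum the plateau estimate \eqref{e:plateaupT} over $x\in\T_r^d$, observe that the constant term contributes $V\cdot V^{-2/3}=V^{1/3}$ while the power-law term contributes $\asymp r^2=V^{2/d}$, and note that $V^{2/d}\le V^{1/3}$ since $d>6$. The paper's proof is simply a more compressed version of what you wrote.
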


\begin{proof}
This follows immediately by summation of \eqref{e:plateaupT} over $x\in\T_r^d$.
Indeed summation of $\tau_{p_c}(x)\asymp \langle x\rangle^{-(d-2)}$
over the torus yields $r^2 = V^{2/d}$, which is smaller when $d> 6$ than the sum of the
constant term which is $V\cdot V^{-2/3} = V^{1/3}$.
\end{proof}

The cube-root divergence in the volume given by Corollary~\ref{cor:chiT}
for periodic boundary conditions should be
contrasted with the situation for free boundary conditions.  For free boundary
conditions, it is a corollary of the bounds on the finite-volume two-point function in \cite[Theorem~1.2]{CH20} that in our setting of high-dimensional percolation
the susceptibility at $p=p_c$ diverges as $r^2$ rather than $r^{d/3}$.
This is one setting where the controversy in the physics literature detailed, e.g.,
in \cite{ZGFDG18}, is rigorously resolved.

Next, we discuss the torus triangle condition.  In general, on a finite graph
there is no unique definition of a critical value because the critical behaviour
extends over a scaling window of $p$ values.  In the above, we have used the
$\Z^d$ critical value $p_c$ as our reference point for the torus with large volume.
In \cite[(1.7)]{BCHSS05a}, instead,
an \emph{intrinsically} defined critical value $p_\T$
is defined to be the unique solution,
given a fixed $\lambda \in[ V^{-1/3},V^{2/3}]$, of the equation
\begin{equation}
\label{e:pTdef}
    \chi^{\T}(p_{\T}) = \lambda V^{1/3}.
\end{equation}
We are interested in large volume $V$, so given any $\lambda>0$ eventually
we do have $\lambda \in[ V^{-1/3},V^{2/3}]$ and $p_\T$ is then well defined.
Of course, $p_{\T}$ depends on $\lambda$, but only slightly and, as we show
below, $p_{\T}$ is an effective critical point no matter which $\lambda$
is chosen.

The \emph{torus triangle diagram} is
\begin{equation}
    \triangle^{\T}_{p}(x) = \sum_{u,v\in \T_r^d} \tau^{\T}_p(u) \tau^{\T}_p(v-u) \tau^{\T}_p(x-v),
\end{equation}
and it attains its maximum value when $x=0$ by \cite[Lemma 3.3]{AN84}.
The \emph{torus triangle condition} is the statement that $\triangle_{p_{\T}}^\T(x)$
is bounded by a constant independent of $r$ and $x$.
Extensive consequences of the torus triangle condition are derived in
\cite{BCHSS05a,HHI07,HHII11,HH17book}.
These consequences often require the stronger assumption that
\begin{equation}
\label{e:tricond}
    \triangle^{\T}_{p_{\T}}(x) \le \mathbbm{1}(x=0) + a_0
\end{equation}
for $x\in \T_r^d$ and some small context-dependent constant $a_0$. We refer to
the condition \eqref{e:tricond} as the \emph{$a_0$-strong torus triangle condition}.
The (strong) torus triangle condition is proved in \cite{BCHSS05b}
using a finite-graph version of the lace expansion under the assumption that either $d$ is very large or $d>6$ and $L$ is large.  We do not need or use the finite-graph
lace expansion in this paper and give an alternate proof of the torus  triangle condition
based on Theorem~\ref{thm:plateau}.
Our proof of the $a_0$-strong torus triangle condition
relies also on its $\Z^d$ counterpart,  namely that
\begin{equation}
\label{e:tricondZd}
    \triangle_{p_c}(x)  =
    \sum_{u,v\in \Z^d} \tau_{p_c}(u) \tau_{p_c}(v-u) \tau_{p_c}(x-v)
    \le \mathbbm{1}(x=0) + a_0.
\end{equation}
Given any $a_0>0$, the $a_0$-strong triangle condition
\eqref{e:tricondZd} is proved in \cite{HS90a}
if $d\ge d_0$ for the nearest-neighbour model,
and if $d>6$ and $L\ge L_0$ for the spread-out model,
with $d_0$ and $L_0$ sufficiently large depending on $a_0$.

\begin{theorem}
\label{thm:tricon}
Let $d>6$ and suppose that \eqref{eq:twopointassumption} holds on $\Z^d$.
\begin{itemize}
\item
\textbf{$p_\T$ in scaling window:}
There exists $\lambda_0>0$ such that for any $\lambda \in (0,\lambda_0]$
there exist constants $C_1$, $r_0$ (both depending on $\lambda$)
such that if we define $p_\T=p_\T(\lambda)$
via $\chi^\T(p_\T)=\lambda V^{1/3}$ then $|p_\T-p_c| \leq C_1 V^{-1/3}$ for every
$r> r_0$.
\item
\textbf{Torus triangle condition:}
With $\lambda_0,r_0$ as above, there exists $C_2$ such that
 $\triangle_{p_\T(\lambda)}^\T (x)\leq C_2$ for every
$\lambda \in (0,\lambda_0]$, $r>r_0$ and $x\in \T^d_r$.
\item
\textbf{Strong torus triangle condition:}
Fix any $a_0>0$ and assume further that the $\frac 12 a_0$-strong triangle condition
\eqref{e:tricondZd} holds on $\Z^d$.
There exists $\lambda_1>0$ (depending on $a_0$)
such that
$\triangle_{p_\T(\lambda)}^\T
 (x)\leq \mathbbm{1}(x=0)+a_0$ for every
 $\lambda \in (0,\lambda_1]$,
$r> r_0(\lambda,a_0)$
 and $x\in \T^d_r$.
\end{itemize}
\end{theorem}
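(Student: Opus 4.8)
\emph{Locating $p_\T$.} The three statements all come from feeding the $\Z^d$ two-point bound of \cref{thm:2pt} and the torus two-point bounds of \cref{thm:plateau} and \cref{cor:chiT} into elementary sums, so my plan is to first pin down $p_\T$ and then insert these bounds into the triangle diagram. Since $p\mapsto\chi^\T(p)$ is a polynomial in $p$ and is strictly increasing (monotone coupling), $p_\T=p_\T(\lambda)$ is well defined and it suffices to sandwich it. Summing \eqref{e:plateau_upper} over $x\in\T_r^d$ and using $\sum_{x\in\T_r^d}\tau_p(x)\le\chi(p)$ gives $\chi^\T(p)\le\bigl(1+C_1 e^{-c_1 m(p)r}\bigr)\chi(p)\le(1+C_1)\chi(p)$ for $p<p_c$; combined with the mean-field asymptotics \eqref{e:chiasy} this yields $\chi^\T\bigl(p_c-CV^{-1/3}\bigr)\preceq C^{-1}V^{1/3}$, which is $<\lambda V^{1/3}$ once $C=C(\lambda)$ is large, forcing $p_\T>p_c-C(\lambda)V^{-1/3}$. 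Conversely \cref{cor:chiT} gives $\chi^\T\bigl(p_c-A_2V^{-1/3}\bigr)\ge c_{A_2}V^{1/3}>\lambda V^{1/3}$ for $\lambda\le c_{A_2}$, hence $p_\T\le p_c-A_2V^{-1/3}<p_c$, which proves the first bullet (here $A_1,A_2,c_2$ are the constants of \cref{thm:plateau}). For $d>6$ one has $V^{-1/3}\ll V^{-2/d}$, so for $r\ge r_0(\lambda)$ the point $p_\T$ lies in the interval $[p_c-A_1V^{-2/d},\,p_c-A_2V^{-1/3}]$ appearing in \eqref{e:plateau_lower}; summing that bound then also gives $\chi(p_\T)\le\tfrac2{c_2}\chi^\T(p_\T)=\tfrac{2\lambda}{c_2}V^{1/3}$, so in fact $\chi(p_\T)\asymp_\lambda V^{1/3}$.

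\emph{A uniform two-point estimate and a change of variables.} Combining \eqref{e:plateau_upper} with the monotonicity $\tau_{p_\T}(x)\le\tau_{p_c}(x)$ in $p$, the bound on $\chi(p_\T)$ just obtained, and $e^{-c_1 m(p_\T)r}\le1$, I get the key estimate
\[
\tau_{p_\T}^\T(y)\ \le\ \tau_{p_c}(\bar y)+C_0\,\lambda\,V^{-2/3}\,\mathbbm{1}(y\neq 0)\qquad (y\in\T_r^d),
\]
with $C_0$ depending only on $d,L$ and, crucially, with \emph{prefactor exactly one} in front of $\tau_{p_c}$ (at $y=0$ this just says $1\le1$). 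I also record the identity
\[
\triangle_{p}^\T(x)=\sum_{\substack{a_1,a_2,a_3\in[-r/2,r/2)^d\cap\Z^d\\ a_1+a_2+a_3\equiv x\ (\mathrm{mod}\ r)}}\tau_{p}^\T(a_1)\,\tau_{p}^\T(a_2)\,\tau_{p}^\T(a_3),
\]
where each box-representative $a_i$ is identified with its image in $\T_r^d$; this follows from the bijection $(u,v)\mapsto(\bar u,\,\overline{v\ominus u},\,\overline{x\ominus v})$ between $(\T_r^d)^2$ and triples of box-representatives summing to $x$ modulo $r$ (here $\ominus$ is subtraction in $\T_r^d$). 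Writing $a_1+a_2+a_3=\bar x+mr$ with $m\in\{-1,0,1\}^d$ breaks this sum into an $m=0$ part, which is at most the $\Z^d$ triangle $\triangle_{p_c}(\bar x)$, plus at most $3^d-1$ wraparound parts, each at most $\triangle_{p_c}(\bar x+mr)$ with $\|\bar x+mr\|_\infty\ge r/2$.

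\emph{The triangle conditions.} Insert the key estimate at $p=p_\T$ into the identity and expand the product over the eight choices of which factors use $\tau_{p_c}(\cdot)$ and which use the constant $C_0\lambda V^{-2/3}$. The all-$\tau_{p_c}$ term is at most $\triangle_{p_c}(\bar x)+\sum_{m\neq 0}\triangle_{p_c}(\bar x+mr)$; the standard convolution bound $\triangle_{p_c}(y)\preceq\langle y\rangle^{-(d-6)}$, valid for $d>6$ from \eqref{eq:twopointassumption}, makes each $m\neq 0$ term $\preceq r^{-(d-6)}$, so their total is $\preceq 3^d r^{-(d-6)}=o_r(1)$ uniformly in $x$, while the $m=0$ term is bounded by $\triangle_{p_c}(\bar x)\preceq 1$ (again by \eqref{eq:twopointassumption}) for the weak version, or by $\mathbbm{1}(x=0)+\tfrac12 a_0$ under the assumed $\tfrac12 a_0$-strong bound \eqref{e:tricondZd} for the strong version. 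Each remaining term carries $k\in\{1,2,3\}$ factors $C_0\lambda V^{-2/3}$; since $\sum_{a\in[-r/2,r/2)^d\cap\Z^d}\tau_{p_c}(a)\preceq r^2$ and there are at most $V^2$ admissible triples, such a term is $\preceq \lambda\,V^{4/d-2/3}$ ($k=1$) and $\preceq\lambda^2 V^{2/d-1/3}$ ($k=2$)—both $o_r(1)$ for $d>6$—and $\le C_0^3\lambda^3$ ($k=3$). Hence $\triangle_{p_\T}^\T(x)\le\triangle_{p_c}(\bar x)+o_r(1)+C_0^3\lambda^3$ uniformly in $x\in\T_r^d$. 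For $r\ge r_0(\lambda)$ this gives the weak triangle condition with $C_2$ uniform over $\lambda\in(0,\lambda_0]$; and, under \eqref{e:tricondZd}, first choosing $\lambda_1(a_0)$ with $C_0^3\lambda_1^3\le\tfrac14 a_0$ and then $r_0(\lambda,a_0)$ making the $o_r(1)$ term $\le\tfrac14 a_0$ yields $\triangle_{p_\T}^\T(x)\le\mathbbm{1}(x=0)+a_0$. (Note that max-at-$0$ from \cite{AN84} is \emph{not} enough for the third bullet, which asks for $\le a_0$ at $x\neq0$, so the uniform-in-$x$ bound is needed.)

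\emph{Main obstacle.} The crux is the control of the exponentially many wraparound terms $\triangle_{p_c}(\bar x+mr)$, $m\neq0$: the bare strong-triangle bound $\tfrac12 a_0$ per term is useless (it would produce $\approx 3^d a_0$), so one genuinely needs the polynomial decay $\triangle_{p_c}(y)\preceq\langle y\rangle^{-(d-6)}$ for $\|y\|_\infty\gtrsim r$, a diagrammatic estimate that only works for $d>6$. The second delicate point is that the plateau correction must enter with the correct size: the naive bound $\tau^\T\le(1+\delta)\tau_{p_c}+C_3V^{-2/3}$ from \eqref{e:plateaupc_upper} contributes an all-constants term of order $C_3^3$, which is not small, whereas working at the intrinsic $p_\T$ with $\lambda$ small makes the plateau height $\chi(p_\T)/V\asymp_\lambda V^{-2/3}$ with \emph{absolute} implied constant, so that term becomes $O(\lambda^3)$ and is absorbed into $a_0$. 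Everything else reduces to routine convolution sums of $\langle\cdot\rangle^{-(d-2)}$ over $\Z^d$ or over a box of side $r$, all controlled by $d>6$.
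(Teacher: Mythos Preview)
Your argument is correct, and the route you take to the torus triangle bound is genuinely different from the paper's.

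\medskip

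\textbf{Locating $p_\T$.} Your localisation of $p_\T$ is essentially the same as the paper's: both use the upper bound $\chi^\T\le(1+C_1)\chi$ from \eqref{e:plateau_upper} and a lower bound on $\chi^\T$ near $p_c$ (you cite \cref{cor:chiT}, the paper sums \eqref{e:plateau_lower} directly) to sandwich $p_\T$ strictly below $p_c$ at distance $\asymp_\lambda V^{-1/3}$.

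\medskip

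\textbf{Bounding the torus triangle.} Here the two proofs diverge. The paper lifts each torus two-point function to the full periodic sum $\tau_p^\T(x)\le\sum_{u\in\Z^d}\tau_p(x+ru)$, then performs a change of variables that collapses the threefold sum exactly into
\[
\triangle^\T_p(x)\ \le\ \sum_{w\in\Z^d}\triangle_p(x+rw),
\]
and applies \cref{lem:integr_tri_bubble_bounds} (which packages the \emph{subcritical exponential decay} of \cref{thm:2pt}) to obtain the clean bound $\triangle^\T_p(x)\le\triangle_{p_c}(x)+C\chi(p)^3/V$ in one stroke, with no $o_r(1)$ error. Your approach instead substitutes the finite additive estimate $\tau^\T_{p_\T}(a)\le\tau_{p_c}(a)+C_0\lambda V^{-2/3}$ and expands; the wraparound piece is then controlled by the \emph{critical polynomial decay} $\triangle_{p_c}(y)\preceq\langle y\rangle^{-(d-6)}$ rather than by subcritical mass.

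\medskip

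\textbf{What each buys.} The paper's route is shorter, reuses its main lemma, and yields the sharper conclusion that the strong triangle condition holds for all $r>r_0(\lambda)$ with $r_0$ independent of $a_0$ (it only needs $\lambda$ small). Your route is more elementary---it never touches the exponential-decay machinery and relies only on \eqref{eq:twopointassumption} plus the plateau upper bound---but produces an additional $o_r(1)$ term ($\preceq 3^d r^{-(d-6)}+\lambda V^{4/d-2/3}+\lambda^2V^{2/d-1/3}$), which forces your $r_0$ in the strong case to depend on $a_0$. This is harmless since the theorem statement permits it, but it is a slightly weaker conclusion than the paper obtains. Your closing remark about why the $(1+\delta)$ bound \eqref{e:plateaupc_upper} would not suffice is well taken and identifies precisely why one must work at $p_\T<p_c$ with the prefactor~$1$.
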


The \emph{critical window} for the torus $\T_r^d$ was defined in \cite{BCHSS05a}
(see \cite[Theorem~1.3]{BCHSS05a}) to consist
of the values of $p$ lying within distance of order $V^{-1/3}$ from $p_\T$.  It was not
proven at that time that the critical value $p_c$ for $\Z^d$ lies in the window,
and it was not until several years later that this fact was proved \cite{HHII11}.
Theorem~\ref{thm:tricon} gives an alternate proof.
In Theorem~\ref{thm:plateau} we have defined the window as being centred at $p_c$
rather than at $p_\T$.    The fact that $|p_c-p_\T|\preceq V^{-1/3}$ indicates
that either choice of centre can be used.
We emphasise that the analysis of \cite{BCHSS05a,HHII11} relied on performing a separate lace expansion on the torus to establish the torus triangle condition, which our proof bypasses.

\subsection{About the proof}

We now give a brief overview of the structure of the paper and the proofs of our main theorems.

\begin{itemize}
\item In \cref{sec:Zdtwopoint,sec:pioneers} we prove our results concerning near-critical percolation on $\Z^d$, \cref{thm:2pt,thm:main1arm}. Both theorems rely on the notion of \emph{pioneer edges}, which are those through
    which the cluster of the origin enters some halfspace for the first time.

\begin{itemize}
\item In \cref{sec:Zdtwopoint} we formulate an estimate on the expected number of pioneer edges for
    a hyperplane, \cref{thm:totalpioneersexpectation_subcritical}, which we then show to imply \cref{thm:2pt,thm:main1arm} in conjunction with the critical two-point estimate \eqref{eq:twopointassumption} and the critical one-arm results of Kozma and Nachmias \cite{KN11}.
\item In \cref{sec:pioneers} we prove our main estimate on pioneer edges, \cref{thm:totalpioneersexpectation_subcritical}. This proof has two components: First, in \cref{sec:number-pioneers}, we apply diagrammatic methods utilising the halfspace two-point function estimates of Chatterjee and Hanson \cite{CH20} to prove \emph{at criticality} that the expected number of pioneers
    for
    the hyperplane $\{ x : x_1 = n\}$ is bounded by a constant independent of $n$ (\cref{prop:totalpioneersexpectation}). Using this together with the aforementioned one-arm estimates of Kozma and Nachmias \cite{KN11}, we deduce a power-law tail bound with exponent $2/3$
    on the \emph{total} number of pioneer edges at criticality (\cref{lem:criticalpioneers}). Second, in \cref{sec:subcritical_pioneers}, we use the theory of decision trees and the OSSS inequality, which we review in \cref{sec:OSSS}, to obtain a differential inequality applying to the distribution of the total number of pioneers (\cref{lem:differentialinequality}). This differential inequality is of the same form as that obtained for the distribution of the radius by Menshikov \cite{Mens86} (see also \cite{DRT19}) and for the distribution of volume by Hutchcroft \cite{Hutc20}. Using this differential inequality together with our results on the distribution of the number of pioneers at $p_c$, we
 deduce sharp estimates on the distribution of the total number of pioneers at $p_c-\eps$ (\cref{prop:subcritical_pioneers}) and conclude the proof of \cref{thm:totalpioneersexpectation_subcritical}.
\end{itemize}

 \item
In \cref{sec:below_window} we apply our $\Z^d$ results to prove the case of \cref{thm:plateau} in which $p$ lies below the scaling window. This eventually leads us to easily derive in Section~\ref{sec:triangle_cond} the (two versions of the) torus triangle condition presented in \cref{thm:tricon}. In order to apply the $\Z^d$ result of Theorem~\ref{thm:2pt} to the torus we crucially rely on a coupling of percolation on $\Z^d$ and on the torus that was first introduced by Benjamini and Schramm \cite{BS96} and developed extensively in the works of Heydenreich and van der Hofstad \cite{HHI07,HHII11}.
The massive decay of $\tau_p(x)$ for $p<p_c$ then directly gives the upper-bound \eqref{e:plateau_upper} while the lower bound requires fine control of diagrammatic estimates and thus occupies the bulk of this section. Much of this work follows the same general strategy used to analyse
weakly self-avoiding walk on the torus in \cite{MS22} but differs in the details.

 \item
Finally, in \cref{sec:pcbd}, we prove the part of \cref{thm:plateau} in which $p$ lies inside the scaling window. The relevant lower bounds are easy consequences of the `below the scaling window' estimates since $\tau_p^\T(x)$ is monotone in $p$. The upper bounds are proven first at $p_c$ using the coupling between $\Z^d$ and $\T_r^d$ percolation as well as the extrinsic one-arm result from Kozma and Nachmias \cite{KN11} using a variation on the methods of van der Hofstad and Sapozhnikov \cite{HS14}. We then extend the result  for $p \in (p_c, p_c+AV^{-1/3}]$ by the combination of an elementary coupling of Bernoulli percolation at different probabilities and of the input of the intrinsic one-arm exponent controlled in Kozma and Nachmias \cite{KN09}.
\end{itemize}

\section{Near-critical percolation: Proof of Theorems~\ref{thm:2pt}--\ref{thm:main1arm}}
\label{sec:Zdtwopoint}

In this section, we prove Theorems~\ref{thm:2pt}--\ref{thm:main1arm}
subject to Theorem~\ref{thm:totalpioneersexpectation_subcritical}, which concerns
the expected number of \emph{pioneer edges}.

\subsection{Pioneer edges}

For each $n\in \Z$, let $S_n$ be the hyperplane $\{ (y_1,\ldots,y_d) \in \Z^d: y_1 = n \}$ and let $H_n$ be the halfspace $H_n=\{ (y_1,\ldots,y_d)\in \Z^d: y_1 \geq n \}$. We will often write $H=H_0$ to lighten notation.

\begin{defn}
\label{def:pioneer}
Given $x\in \Z^d$, we call an edge $\{y,z\} \in \B$
an $x$-\emph{pioneer} if $x_1 < z_1$, $y_1 < z_1$, $\{y,z\}$ is open, and $x$ is connected to $y$ by an open path contained in the half-space $\{(w_1,\ldots,w_d) : w_1
<
z_1\}$.
That is, $\{y,z\}$ is an $x$-pioneer if $z$ lies to the right of $x$ and
there exists an open path starting at $x$ whose last edge is $\{y,z\}$ with the
path lying strictly to the left of $z$ at every previous time.
For each $x\in \Z^d$ and $n\geq 1$ we define $\mathcal{P}_x(n)$ to be the set of $x$-pioneers $\{y,z\}$
with $y_1 < x_1+n \leq z_1$ and define
 $\mathcal{P}_x=\bigcup_{n\geq 1} \mathcal{P}_x(n)$ to be the set of all $x$-pioneers.
 See Figure~\ref{fig:pioneers}.
\end{defn}

  \begin{figure}
  \centering
\includegraphics{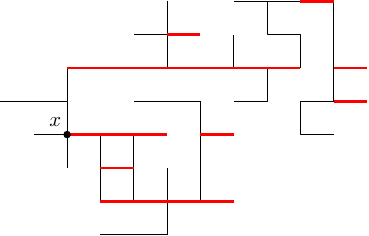}
\caption{Pioneer edges of a cluster in $\Z^2$. Edges that are pioneers with respect to the distinguished vertex $x$ are thick and red, while other edges are thin and black.
\label{fig:pioneers}
 }
 \end{figure}

 Since we are only interested in finite-range models, we have that $\cP_x(n) \cap \cP_x(m)=\emptyset$ when $m \ge
 n+L$ and hence that $\frac{1}{L} \sum_{n\geq 1} |\mathcal{P}_x(n)| \leq |\mathcal{P}_x| \leq \sum_{n\geq 1} |\mathcal{P}_x(n)|$.
For each $p \in [0,1]$ and $n\geq 1$ we define $P_p(n)=\E_p |\cP_0(n)|$, which may be infinite.
We begin by noting that $P_p(n)$ satisfies the following elementary submultiplicativity property.

\begin{lemma}
\label{lem:pioneers_submult}
$P_p(n+m) \leq P_p(n) \max_{0\leq i \leq L-1} P_p(m-i)$ for every $p\in [0,1]$ and $n\geq 1$ and $m\geq L$.
\end{lemma}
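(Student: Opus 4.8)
The plan is to deduce the inequality from a ``first-crossing'' decomposition of pioneer edges combined with the van den Berg--Kesten (BK) inequality. Fix $p\in[0,1]$, $n\ge 1$ and $m\ge L$. I would begin with a deterministic observation: every $\{y',z'\}\in\mathcal{P}_0(n+m)$ is obtained by extending some edge of $\mathcal{P}_0(n)$. Indeed, let $\gamma=(0=w_0,w_1,\dots,w_k=y')$ be a self-avoiding open path contained in $\{w:w_1<z'_1\}$, as supplied by the definition of a pioneer. Since $\{y',z'\}$ has first-coordinate span at most $L\le m$, we get $y'_1\ge z'_1-L\ge n+m-L\ge n$, so $\gamma$ must cross the hyperplane $S_n$; let $j$ be minimal with $(w_j)_1\ge n$ and set $y=w_{j-1}$, $z=w_j$. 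The initial segment $w_0,\dots,w_{j-1}$ lies in $\{w_1<n\}\subseteq\{w_1<z_1\}$, so $\{y,z\}$ is a $0$-pioneer with $y_1<n\le z_1$, i.e.\ $\{y,z\}\in\mathcal{P}_0(n)$; moreover $(w_{j-1})_1\le n-1$ together with the range bound gives $z_1\le n+L-1$, so $i:=z_1-n\in\{0,\dots,L-1\}$. Finally $z'_1\ge n+m\ge n+L>z_1$, and the terminal segment $z=w_j,w_{j+1},\dots,w_k=y'$ followed by the edge $\{y',z'\}$ witnesses that $\{y',z'\}$ is a $z$-pioneer with $y'_1<z_1+(m-i)=n+m\le z'_1$, i.e.\ $\{y',z'\}\in\mathcal{P}_z(m-i)$ (the level $m-i\ge m-L+1\ge 1$, so this is well defined).

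Next I would promote this to a disjoint-occurrence statement. Because $\gamma$ is self-avoiding and $z'\notin\gamma$, the edges $\{w_0,w_1\},\dots,\{w_{j-1},w_j\}$ witnessing $\{y,z\}\in\mathcal{P}_0(n)$ are edge-disjoint from the edges $\{w_j,w_{j+1}\},\dots,\{w_{k-1},w_k\},\{y',z'\}$ witnessing $\{y',z'\}\in\mathcal{P}_z(m-i)$, and both events $\{\,\{y,z\}\in\mathcal{P}_0(n)\,\}$ and $\{\,\{y',z'\}\in\mathcal{P}_z(\ell)\,\}$ are increasing. Hence for each $\{y',z'\}\in\mathcal{P}_0(n+m)$ there is at least one edge $\{y,z\}$ with $y_1<n\le z_1$ (automatically $z_1-n\le L-1$ by the range bound) for which $\{\{y,z\}\in\mathcal{P}_0(n)\}\circ\{\{y',z'\}\in\mathcal{P}_z(m-(z_1-n))\}$ occurs. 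Summing the resulting indicator bound over all $\{y',z'\}$, taking $\E_p$, and applying BK to each term gives
\[
P_p(n+m)\ \le\ \sum_{\{y,z\}:\,y_1<n\le z_1}\P_p\!\big(\{y,z\}\in\mathcal{P}_0(n)\big)\;\E_p\big|\mathcal{P}_z\big(m-(z_1-n)\big)\big|.
\]

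To conclude, I would use translation invariance of the model on $\Z^d$ to replace $\E_p|\mathcal{P}_z(\ell)|$ by $P_p(\ell)$, bound $P_p(m-(z_1-n))\le\max_{0\le i\le L-1}P_p(m-i)$ using $z_1-n\in\{0,\dots,L-1\}$, pull this maximum out of the sum, and recognise that $\sum_{\{y,z\}:\,y_1<n\le z_1}\P_p(\{y,z\}\in\mathcal{P}_0(n))=\E_p|\mathcal{P}_0(n)|=P_p(n)$. This yields $P_p(n+m)\le P_p(n)\max_{0\le i\le L-1}P_p(m-i)$, as desired.

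The only delicate point is the finite-range bookkeeping: one must check that the first-crossing edge genuinely satisfies $z_1\le n+L-1$, that the extension lands in $\mathcal{P}_z$ at a strictly positive level, and that the two witnessing edge-sets are disjoint — the hypothesis $m\ge L$ is used precisely here, first to force $\gamma$ to reach $S_n$ and then to guarantee $z'_1>z_1$. Passing from the finite-volume form of the BK inequality to $\Z^d$ for these increasing (but not finitely determined) events is routine via the usual exhaustion argument, and I would not dwell on it.
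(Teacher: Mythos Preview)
Your argument is correct and is essentially the same as the paper's: both proofs decompose a pioneer edge in $\mathcal{P}_0(n+m)$ according to the first edge of a witnessing path that crosses the hyperplane $S_n$, observe that this edge lies in $\mathcal{P}_0(n)$ with second endpoint at height $n+i$ for some $0\le i\le L-1$, note that the two path segments give disjoint witnesses, and conclude via BK and translation invariance. Your notation differs (you write the crossing edge as $\{y,z\}$ and track $i=z_1-n$, the paper writes it as $\{a,b\}$ and tracks $b_1$), and you spell out the range bookkeeping and the $m\ge L$ hypothesis more explicitly, but there is no substantive difference.
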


It is convenient to simplify the inequality of Lemma~\ref{lem:pioneers_submult},
as follows. Let $e_1=(1,0,\ldots,0)$ be the basis vector in the positive horizontal direction.  First we observe that if $A_m$ denotes the event that
each of the $m$ unit-length
edges of the horizontal path connecting $0$ to $me_1$ are open then
\begin{align}
	\E_p|\cP_0(n+m)|
	&\geq \E_p\Big[|\cP_0(n+m)|\1(A_m) \Big]
    \ge
    \E_p\Big[|\cP_{m e_1}(n)|\1(A_m)\Big]\nnb
	&\geq \E_p|\cP_{m e_1}(n)|p^m
	=  \E_p|\cP_{0}(n)|p^m ,
\end{align}
where we used the Harris--FKG inequality for the third inequality.  A complementary
bound can be obtained by a similar argument, with the result that
\begin{equation}
    p^m P_p(n) \leq P_p(n+m) \leq p^{-m} P_p(n)
\end{equation}
for every $0 < p \leq 1$ and $n,m\geq 1$.  Therefore, by \cref{lem:pioneers_submult},
we obtain the simplified submuliplicative inequality
\begin{equation}
\label{eq:submult_simplified}
P_p(n+m) \leq p^{-L+1}P_p(n)P_p(m)
\end{equation}
for every $0 < p \leq 1$  and $n,m \geq 1$.

\begin{proof}[Proof of \cref{lem:pioneers_submult}]
Suppose that $\{y,z\} \in \cP_0(n+m)$ and that $y_1<z_1$, so that there exists a simple open path $\gamma$ starting at $0$ that has $\{y,z\}$ as its last edge and lies strictly to the left of $z$ at every previous step. Letting $\{a,b\}$ be the first edge crossed by $\gamma$ as it enters $H_n$ for the first time (where $a_1<b_1$), we see that the portion of $\gamma$ up to and including the edge $\{a,b\}$ and the portion of $\gamma$ after this edge are disjoint witnesses for the events that $\{a,b\}\in \cP_0(n)$ and that $\{y,z\}\in \cP_b(n+m-b_1)$. It follows by a union bound and the BK inequality that
\begin{align}
P_p(n+m) &= \sum_{y_1<z_1} \P_p\left( \{y,z\} \in \cP_0(n+m)\right)\\
&\leq \sum_{a_1 < b_1} \P_p\left( \{a,b\} \in \cP_0(n)\right) \sum_{y_1<z_1} \P_p\left( \{y,z\} \in \cP_0(n+m-b_1)\right).
\end{align}
Now, if $a_1<b_1$ and $\{a,b\} \in \cP_0(n)$ then we must have that $n\leq b_1 \leq n+L-1$ and the claim follows by translation-invariance.
\end{proof}

\cref{thm:2pt,thm:main1arm} will both be deduced from the following theorem.

\begin{theorem}
\label{thm:totalpioneersexpectation_subcritical}
Let $d>6$ and suppose
that \eqref{eq:twopointassumption} holds.
There exist positive constants $c$ and $C$ such that
\begin{equation}
 \exp\left[ - C(p_c-p)^{1/2} n \right] \preceq P_p(n)
 \preceq \exp\left[ - c(p_c-p)^{1/2} n \right]
\end{equation}
for every $n\geq 1$ and $p_c/2\leq p \leq p_c$.
\end{theorem}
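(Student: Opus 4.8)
The plan is to deduce the lower bound directly from submultiplicativity together with the known mass asymptotics, and to reduce the upper bound --- again via submultiplicativity --- to the single scalar estimate $\E_p|\mathcal P_0|\preceq(p_c-p)^{-1/2}$ on the expected total number of pioneers. For the lower bound, note first that by \eqref{eq:submult_simplified} the sequence $b_n:=p^{-L+1}P_p(n)$ is submultiplicative, $b_{n+m}\le b_nb_m$; it is finite, since $P_p(n)\le P_{p_c}(n)<\infty$ by \cref{prop:totalpioneersexpectation} and the fact that the event $\{\{y,z\}\in\mathcal P_0(n)\}$ is increasing (so $P_p(n)$ is nondecreasing in $p$), and it is strictly positive. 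Hence Fekete's lemma gives $\tfrac1n\log b_n\to\inf_{n\ge1}\tfrac1n\log b_n=:-\mu(p)$, so that $P_p(n)=p^{L-1}b_n\ge p^{L-1}e^{-\mu(p)n}$ for every $n\ge1$. Since connecting $0$ to $ne_1$ forces the first edge through which a witnessing open path enters the half-space $H_n$ to lie in $\mathcal P_0(n)$, we have $P_p(n)\ge\tau_p(ne_1)$, and comparing with the definition \eqref{eq:massdef} of the mass yields $\mu(p)\le m(p)$. By Hara's asymptotics $m(p)\sim A_m(p_c-p)^{1/2}$ \eqref{e:masy} together with the monotonicity of $m$, one has $m(p)\le C(p_c-p)^{1/2}$ throughout $[p_c/2,p_c]$, and since $p^{L-1}\ge(p_c/2)^{L-1}$ there, we conclude $P_p(n)\succeq\exp[-C(p_c-p)^{1/2}n]$.

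For the upper bound, grant for the moment the scalar estimate $\E_p|\mathcal P_0|\preceq\eps^{-1/2}$ (writing $\eps=p_c-p$) and recall from the excerpt that $\sum_{n\ge1}P_p(n)\le L\,\E_p|\mathcal P_0|\preceq\eps^{-1/2}$. A pigeonhole argument over the dyadic window $(N,2N]$ with $N\asymp\eps^{-1/2}$ then produces some $n^\ast\asymp\eps^{-1/2}$ with $p^{-L+1}P_p(n^\ast)\le\tfrac12$. Writing an arbitrary $n$ as $n=qn^\ast+s$ with $0\le s<n^\ast$ and iterating \eqref{eq:submult_simplified} gives $P_p(n)\le\bigl(p^{-L+1}P_p(n^\ast)\bigr)^{q}P_p(s)\le 2^{-q}\sup_{m\ge1}P_{p_c}(m)$, where we used $P_p(s)\le P_{p_c}(s)$ and the uniform critical bound $\sup_m P_{p_c}(m)<\infty$ of \cref{prop:totalpioneersexpectation}. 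Since $q\ge n/n^\ast-1$ and $n^\ast\asymp\eps^{-1/2}$, this is $\preceq\exp[-c\,\eps^{1/2}n]$, which is the claimed bound. So the theorem follows once the scalar estimate is in hand.

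The scalar estimate $\E_p|\mathcal P_0|\preceq\eps^{-1/2}$ is the substantive part, and I would obtain it in three steps. (i) At criticality one shows that the expected number of pioneers for a single hyperplane is bounded uniformly in the hyperplane: expanding $P_{p_c}(n)=p_c\sum_{\{y,z\}:\,y_1<n\le z_1}\P_{p_c}\!\left(0\leftrightarrow y\text{ inside }\{w_1<z_1\}\right)$ and inserting the Chatterjee--Hanson half-space two-point estimates \cite{CH20}, whose extra decay near the boundary hyperplane is exactly what is needed, makes the sum over $y$ converge to a constant independent of $n$ (\cref{prop:totalpioneersexpectation}). (ii) Combining this with the Kozma--Nachmias extrinsic one-arm bound $\P_{p_c}(0\leftrightarrow\partial\Lambda_r)\asymp r^{-2}$ \cite{KN11} --- and the elementary observation that connecting $0$ to $H_n$ forces at least $n/L$ pioneers --- yields the critical tail bound $\P_{p_c}(|\mathcal P_0|\ge k)\preceq k^{-2/3}$ (\cref{lem:criticalpioneers}). (iii) Applying the OSSS inequality \cite{OSSS05} to a decision tree that explores the cluster of the origin (say in order of increasing first coordinate) and halts once $k$ pioneers have been identified yields a Menshikov-type differential inequality for $p\mapsto\P_p(|\mathcal P_0|\ge k)$, in the spirit of \cite{Mens86,DRT19,Hutc20} (\cref{lem:differentialinequality}); the mechanism is that the revealment of each edge is bounded by the probability that the partially-revealed cluster reaches it, hence by two-point functions. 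Integrating this differential inequality from $p_c-\eps$ up to $p_c$ with the critical tail as boundary data produces the sharp subcritical tail bounds of \cref{prop:subcritical_pioneers}, whose stretched-exponential correction has rate commensurate with the correlation length $\asymp\eps^{-1/2}$, and summing over $k$ gives $\E_{p_c-\eps}|\mathcal P_0|\preceq\eps^{-1/2}$.

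The main obstacles are steps (i) and (iii): the half-space diagrammatic estimate genuinely requires the improved near-boundary decay of \cite{CH20} rather than just the bulk bound \eqref{eq:twopointassumption}, and one must exhibit a decision tree for the pioneer count with uniformly small revealments in order for the OSSS inequality to deliver a differential inequality of the correct strength. By contrast, the submultiplicativity bookkeeping above is routine, and the resulting estimate on $P_p(n)$ will then feed directly into the proofs of \cref{thm:2pt,thm:main1arm}.
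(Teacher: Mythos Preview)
Your overall architecture matches the paper's exactly: lower bound via Fekete and the mass asymptotics, upper bound via a scalar estimate $\E_{p_c-\eps}|\cP_0|\preceq\eps^{-1/2}$ followed by pigeonhole and iterated submultiplicativity, and the scalar estimate itself via (i) a uniform critical bound $P_{p_c}(n)\preceq 1$, (ii) the $k^{-2/3}$ tail via Kozma--Nachmias, and (iii) an OSSS differential inequality. So the strategy is right.

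There is, however, a real gap in your description of step (i). You assert that inserting the Chatterjee--Hanson half-space bound into $\bar P(n)=\sum_{x\in\Z^{d-1}}\P\bigl((0,x)\xleftrightarrow{H}(n,0)\bigr)$ makes the sum converge to a constant. It does not: the bound \eqref{eq:CH_oneboundary} gives $\langle(n,x)\rangle^{-(d-1)}$, and summing this over $x\in\Z^{d-1}$ with $\|x\|\gg n$ contributes $\sum_{r>n}r^{d-2}\cdot r^{-(d-1)}=\sum_{r>n}r^{-1}$, which diverges. The paper salvages a $\log(n+2)$ bound (\cref{lem:pioneers_expectation_log}) by invoking the \emph{both-endpoints-on-boundary} estimate \eqref{eq:CH_bothboundary} for very large $x$ via Harris--FKG, but this is still not a constant. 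The passage from $\log n$ to $O(1)$ is the genuinely new step: the paper proves a complementary \emph{supermultiplicative} inequality $\bar P^*(2n+\ell)\succeq\bar P^*(n)^2$ (\cref{lem:supermultiplicative}), established by a delicate ``good pivotal vertex'' inclusion-exclusion that shows the triangle-type correction is small when a separation parameter $\ell$ is large. If $\bar P^*$ were unbounded this would force doubly-exponential growth, contradicting the $\log$ bound. Remark~\ref{rk:CH_extension} in the paper makes exactly your point in reverse: a sharper half-space bound \emph{would} allow direct summation, but no such bound is currently available.

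A smaller point on step (iii): the decision forest the paper actually uses is not an exploration of the cluster of the origin halting at $k$ pioneers. It introduces an auxiliary ghost field $\cG$ on edges, takes $g=\mathbbm{1}(\cP_0\cap\cG\neq\emptyset)$, and uses a forest indexed by horizontal edges in which each tree first reveals the ghost bit and only then explores backwards. The revealment of an edge is then governed by $\E_p[1-e^{-h|\cP_0|}]$ rather than by a two-point function, which is what makes the resulting differential inequality close on the tail of $|\cP_0|$ itself. Your sketch ``revealment bounded by two-point functions'' would instead produce a bound involving $\chi(p)$, which is not obviously sufficient to recover the $\eps^{-1/2}$ scaling without further input.
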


The lower bound of \cref{thm:totalpioneersexpectation_subcritical}
is an easy
consequence of
\eqref{eq:submult_simplified}, as follows.  First, by
Fekete's lemma,
$-\lim_{n\to\infty} \frac{1}{n} \log P_p(n)$ is well-defined as an element of
$[-\infty,
\infty]$ and satisfies
\begin{equation}
-\lim_{n\to\infty} \frac{1}{n} \log P_p(n) = \sup_{n\geq 1} -\frac{1}{n}\log \left[ p^{-L+1}P_p(n) \right]
\end{equation}
for every $0 < p \leq 1$.
Also,
$\cP_0(n)$ must be nonempty on the event that $0$ is connected to $n e_1$ and hence by Markov's inequality
$\tau_{p_c}(ne_1) \leq P_p(n)$ for every $n\geq 1$. Using this together with \eqref{e:masy} one may verify that the
exponential decay rate of $P_p(n)$ is equal to the mass
$m(p)$ whenever $p<p_c$, and hence that
\begin{equation}
\label{eq:pioneers_Fekete}
p^{L-1} \exp\left[-m(p)n \right] \leq P_p(n) \leq \exp\left[ - m(p) n+o(n) \right]
\end{equation}
as $n\to \infty$ for each fixed $p<p_c$, where
the subexponential correction in the upper bound may depend on the value of $p<p_c$. (Indeed, explicit upper bounds of this form can be deduced from \eqref{e:tauexpbd} by direct summation.) This is of course consistent with \cref{thm:totalpioneersexpectation_subcritical} since $m(p) \asymp |p-p_c|^{1/2}$ as $p\uparrow p_c$ in the high-dimensional setting \cite{Hara08}.
\cref{thm:totalpioneersexpectation_subcritical} eliminates the sub-exponential term from this upper bound for high-dimensional models at the cost of replacing $m(p)$ with $cm(p)$ for some positive constant $c$.  This will be used to obtain the sharp control of the subexponential terms in \cref{thm:2pt,thm:main1arm}.

\subsection{Proof of Theorems~\ref{thm:2pt}--\ref{thm:main1arm}}

\subsubsection{Proof of Theorem~\ref{thm:2pt} and the upper bound of Theorem~\ref{thm:main1arm}}

We now show how \cref{thm:totalpioneersexpectation_subcritical} easily implies \cref{thm:2pt} and the upper bound of \cref{thm:main1arm}.

\begin{proof}[Proof of \cref{thm:2pt} given \cref{thm:totalpioneersexpectation_subcritical}]
Note that it is enough to prove the result for $p \in [p_c/2,p_c]$ so that Theorem \ref{thm:totalpioneersexpectation_subcritical} applies. Indeed, for $p\leq p_c/2$ we can simply use monotonicity to bound $\tau_p(x)\leq \tau_{p_c/2}(x)$ and deduce a bound of the desired form (with a smaller constant in the exponential).

We may assume without loss of generality that the point $x\in \Z^d$ satisfies $x_1=\langle x \rangle \geq 4L \geq 1$. Let $n=\lfloor x_1/2 \rfloor$. Suppose that the origin is connected to $x$ by some simple open path $\gamma$, and let $\{a,b\}$ be the edge that is crossed by $\gamma$ as it enters the halfspace $H_n$ for the first time, with $a_1<b_1$. Then the portion of $\gamma$ up to and including the edge $\{a,b\}$ and the portion of $\gamma$ after this edge are disjoint witnesses for the events $\{a,b\} \in \cP_0(n)$ and $\{b\leftrightarrow x\}$. Thus, we have by a union bound and the BK inequality that
\begin{align}
\P_p(0\leftrightarrow x) &\leq \sum_{a_1 < b_1} \P_p\bigl(\{a,b\} \in \cP_0(n)\bigr) \P_p(b \leftrightarrow x)
\nnb & \preceq \sum_{a_1 < b_1} \P_p\bigl(\{a,b\} \in \cP_0(n)\bigr) \cdot \langle x-b \rangle^{-d+2},
\end{align}
for every $0\leq p\leq p_c$, where we used \eqref{eq:twopointassumption} in the second inequality. Now, if $\{a,b\} \in \cP_0(n)$ then we must have that $n\leq b_1 \leq n+L-1$ and hence that $\langle x-b \rangle \geq x_1 -b_1 \geq x_1/4$, so that there exists a positive constant $c$ such that
\begin{align}
\P_p(0\leftrightarrow x) &\preceq \langle x \rangle^{-d+2} \sum_{a_1 < b_1} \P_p\bigl(\{a,b\} \in \cP_0(n)\bigr) = \langle x \rangle^{-d+2} P_p(n)
\nnb &\preceq \langle x \rangle^{-d+2} \exp\left[-c(p_c-p)^{1/2} \langle x \rangle \right]
\end{align}
by \cref{thm:totalpioneersexpectation_subcritical}.  This completes the proof of \cref{thm:2pt}.
\end{proof}

\begin{proof}[Proof of upper bound of \cref{thm:main1arm} given \cref{thm:totalpioneersexpectation_subcritical}]
Recall that $H_n$ denotes the halfspace $\{y \in \Z^d : y_1 \geq n\}$.
It suffices by symmetry to prove that there exists a positive constant $c$ such that
\begin{equation}
\P_p(0 \leftrightarrow H_{2n}) \preceq \frac{1}{n^2} \exp\left[-c(p_c-p)^{1/2}n\right]
\end{equation}
for every $n\geq 0$ and $p_c/2 \leq p \leq p_c$.
Let $n \geq 2L \geq 1$, and suppose that the origin is connected to the halfspace $H_{2n}$ by some simple open path $\gamma$. Letting  $\{a,b\}$ with $a_1<b_1$ be the edge that is crossed by $\gamma$ as it enters the halfspace $H_n$ for the first time, we observe that the portion of $\gamma$ up to and including the edge $\{a,b\}$ and the portion of $\gamma$ after this edge are disjoint witnesses for the events $\{a,b\} \in \cP_0(n)$ and $\{b\leftrightarrow H_{2n}\}$. Thus, we have by a union bound and the BK inequality that
\begin{align}
\P_p(0\leftrightarrow H_{2n}) \leq \sum_{a_1 < b_1} \P_p\bigl(\{a,b\} \in \cP_0(n)\bigr) \P_p(b \leftrightarrow H_{2n}).
\end{align}
Since $b_1 \leq n+L-1 \leq 3n/2$, we deduce by the main result of \cite{KN11} (i.e., the $p=p_c$ case of \cref{thm:main1arm}) that
\begin{align}
\P_p(0\leftrightarrow H_{2n}) \preceq n^{-2} \sum_{a_1 < b_1} \P_p\bigl(\{a,b\} \in \cP_0(n)\bigr)  = n^{-2} P_p(n) \preceq n^{-2} \exp\left[-c(p_c-p)^{1/2}n\right]
\end{align}
as claimed, where we used \cref{thm:totalpioneersexpectation_subcritical} in the final inequality.
\end{proof}

\subsubsection{Proof of lower bound of Theorem~\ref{thm:main1arm}}

In this section we apply \cref{thm:totalpioneersexpectation_subcritical} to prove the lower bound of \cref{thm:main1arm}.
We give the proof
for the nearest-neighbour model, the general finite-range proof being similar but requiring more involved notation.

We begin with some definitions.
Recall that $S_r$ denotes the hyperplane $\{x\in \Z^d : x_1=r\}$ for each $r\in \Z$. For each $-\infty \leq n \leq m \leq \infty$, let $S_{n,m}$ denote the slab $S_{n,m}:=\bigcup_{i=n}^m S_i$.
For each $r\geq 0$, let $X_r$ be the number of points in the hyperplane $S_r$ that are connected to the origin by an open path lying within the halfspace $S_{-\infty,r}$ and let $Y_r \leq X_r$ be the number of points  in the hyperplane $S_r=\{x\in \Z^d : x_1=r\}$ that are connected to the origin by an open path lying within the slab $S_{-r,r}$.
Since we are working with nearest-neighbour models, every edge in $\cP_0(r+1)$ must be of the form $\{(r,x),(r+1,x)\}$ for some $x\in \Z^{d-1}$, and the edge $\{(r,x),(r+1,x)\}$ belongs to $\cP_0(r+1)$ if and only if it is open and $(r,x)$ is connected to $0$ inside the halfspace lying to the left of $(r,x)$.
From this it follows that
\begin{equation}
\label{e:EXr}
    \E_p X_r = \frac{1}{p}\E_p |\cP_0(r+1)|
\end{equation}
for every $r\geq 0$ and $0< p \leq 1$.

\begin{proof}[Proof of lower bound of \cref{thm:main1arm}]
Let $r \ge 1$.
The lower bound we wish to prove asserts that
\begin{equation}
    \P_{p} \bigl( 0 \leftrightarrow \partial \Lambda_r\bigr)
    \ge
    \frac{c}{r^2} \exp\left(- C (p_c-p)^{1/2} r \right)
    .
\end{equation}
Since $ \{Y_r>0\} \subset \{0 \conn \partial \Lambda_r\} $, it suffices to prove that
the above lower bound holds with instead
$\P_p(Y_r>0)$ on the left-hand side.  We will prove this via the
Cauchy--Schwarz inequality
\begin{equation}
    \P_p(Y_r>0) \ge \frac{\left(\E_p Y_r\right)^2}{\E_p  \left[Y_r^2 \right]}
\end{equation}
together with suitable estimates on the first and second moments of $Y_r$.

It follows from \eqref{e:EXr} and \cref{thm:totalpioneersexpectation_subcritical} that there exist positive constants $c$ and $C$ such that
\begin{equation}
\label{eq:E_pX_r}
\exp\left[-C(p_c-p)^{1/2}r\right] \preceq \E_p X_r \preceq \exp\left[-c(p_c-p)^{1/2}r\right]
\end{equation}
for every $p_c/2 \leq p \leq p_c$ and $r\geq 0$.
We write $\{x \xleftrightarrow{A} y\}$ to mean that $x$ and $y$ are connected by an open path using only vertices of $A$.
Observe that for each $r\geq 1$ and $x\in S_r$ we have the inclusion of sets
\begin{equation}
\{0 \xleftrightarrow{S_{-\infty,r}} x \} \setminus \{0 \xleftrightarrow{S_{-r,r}} x\} \subseteq \bigcup_{y\in S_{-r}} \{0 \xleftrightarrow{S_{-r,r}} y \} \circ \{y \xleftrightarrow{S_{-\infty,r}} x\}.
\end{equation}
Indeed, if the event on the left-hand side of this inclusion holds, $\gamma$ is an open path connecting $0$ and $x$ in $S_{-\infty,r}$, and $y$ is the first point of $S_{-r}$ visited by $\gamma$ then the portions of $\gamma$ before and after visiting $y$ are disjoint witnesses for the events  $\{0 \xleftrightarrow{S_{-r,r}} y \}$ and $\{y \xleftrightarrow{S_{-\infty,r}} x\}$ as claimed. It follows by a union bound, the BK inequality, and translation and reflection symmetry that
\begin{align}
\E_p X_r \leq \E_p Y_r + \E_p Y_r \cdot \E_p X_{2r}
\end{align}
for every $0\leq p \leq 1$ and $r\geq 0$.
Applying the estimate \eqref{eq:E_pX_r} it follows that $\E_p Y_r \asymp \E_p X_r$ for every $p_c/2 \leq p \leq p_c$ and $r\geq 0$ and hence that
\begin{equation}
\label{eq:E_pY_r}
\exp\left[-C(p_c-p)^{1/2}r\right] \preceq \E_p Y_r \preceq \exp\left[-c(p_c-p)^{1/2}r\right]
\end{equation}
for every $p_c/2 \leq p \leq p_c$ and $r\geq 0$.

We turn now to the second moment of the random variable $Y_r$.
 Suppose that $x$ and $y$ are two points in $S_r$ both of which are connected to the origin in $S_{-r,r}$.
There must exist
 a point $z\in S_{-r,r}$ such that the events $\{0 \xleftrightarrow{S_{-r,r}} z\}$, $\{z \xleftrightarrow{S_{-r,r}} x\}$, and $\{z \xleftrightarrow{S_{-r,r}} y\}$ all occur disjointly. It follows by a union bound and the BK inequality that
\begin{align}
\E_p \left[Y_r^2 \right] \leq \sum_{k=-r}^r \sum_{z\in S_k} \P_p(0\xleftrightarrow{S_{-r,r}} z) \sum_{x,y \in S_r} \P_p(z\xleftrightarrow{S_{-r,r}} x) \P_p(z\xleftrightarrow{S_{-r,r}} y)
\end{align}
and hence by
\eqref{eq:E_pX_r} that
\begin{align}
\E_p \left[Y_r^2 \right] \preceq \sum_{k=-r}^r \sum_{z\in S_k} \P_p(0\xleftrightarrow{S_{-r,r}} z) \exp\left[ - 2c(p_c-p)^{1/2} (r-k)\right]
\end{align}
for every $p_c/2\leq p \leq p_c$.
Our next goal is to bound the resulting sum over $z$ for each $-r \leq k \leq r$.
Suppose that $z \in S_k$ for some $-r \leq k \leq r$ and suppose that the origin is connected to $z$ by a simple open path in $S_{-r,r}$. By considering the right-most point that this path visits, we see that there must exist $0\leq a \leq r$ and $w\in S_a$ such that the events $\{0 \xleftrightarrow{S_{-r,a}} w\}$ and $\{w \xleftrightarrow{S_{-r,a}} z \}$ occur disjointly. Thus, applying a union bound and the BK inequality again as above, we obtain that
\begin{align}
\sum_{z\in S_k} \P_p(0\xleftrightarrow{S_{-r,r}} z) &\leq \sum_{a= k \vee 0}^r \sum_{w\in S_a} \sum_{z\in S_k} \P_p(0\xleftrightarrow{S_{-r,a}} w) \P_p(w\xleftrightarrow{S_{-r,a}} z)\nnb
&\leq \sum_{a= k \vee 0}^r \sum_{w\in S_a} \sum_{z\in S_k} \P_p(0\xleftrightarrow{S_{-\infty,a}} w) \P_p(w\xleftrightarrow{S_{-\infty,a}} z)
\end{align}
and a further application of \eqref{eq:E_pX_r} gives that
\begin{align}
\sum_{z\in S_k} \P_p(0\xleftrightarrow{S_{-r,r}} z)
&\preceq \sum_{a= k \vee 0}^r \exp\left[ - c(p_c-p)^{1/2} a - c(p_c-p)^{1/2} (a-k)\right]\nnb
& \preceq r
\exp\left[ - c(p_c-p)^{1/2} |k| \right],
\end{align}
for every $p_c/2 \leq p \leq p_c$, $r\geq 1$,
and $-r \leq k \leq r$. Putting these estimates together we obtain that
\begin{align}
\E_p \left[Y_r^2 \right] &\preceq r
\sum_{k=-r}^r \exp\left[ - 2c(p_c-p)^{1/2} (r-k)-c(p_c-p)^{1/2} |k|\right]\nnb
& \preceq  r^2
 \exp\left[ - c(p_c-p)^{1/2} r \right]
\end{align}
for every $p_c/2\leq p \leq p_c$ and $r\geq 1$.
Putting this together with the lower bound of \eqref{eq:E_pY_r},
we obtain
\begin{equation}
\P_p(Y_r >0) \geq \frac{\left(\E_p Y_r\right)^2}{\E_p  \left[Y_r^2 \right]} \succeq \frac{1}{r^2} \exp\left[ -(2C-c) (p_c-p)^{1/2} r\right]
\end{equation}
for every $p_c/2 \leq p \leq p_c$ and $r\geq 1$.
This completes the proof.
\end{proof}

\section{Expected number of pioneers:
Proof of Theorem~\ref{thm:totalpioneersexpectation_subcritical}}
\label{sec:pioneers}

In this section we complete the proof of Theorems~\ref{thm:2pt}--\ref{thm:main1arm}
by proving \cref{thm:totalpioneersexpectation_subcritical}.

\subsection{The expected number of critical pioneers}
\label{sec:number-pioneers}

\subsubsection{The critical case of \cref{thm:totalpioneersexpectation_subcritical}}

In this section we
prove the $p=p_c$ case of \cref{thm:totalpioneersexpectation_subcritical}.

\begin{prop}
\label{prop:totalpioneersexpectation}
Let $d>6$ and suppose
 that \eqref{eq:twopointassumption} holds.
There exist positive constants $c$ and $C$ such that
$c\leq P_{p_c}(n) \leq C $
for every $n\geq 1$.
\end{prop}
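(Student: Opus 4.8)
The plan is to rewrite $P_{p_c}(n)=\E_{p_c}|\cP_0(n)|$ as a sum of critical \emph{half-space} two-point functions and then estimate this sum using the bounds of Chatterjee and Hanson~\cite{CH20}. The key reduction is the following: if $\{y,z\}\in\B$ has $y_1<z_1$, then $\{\{y,z\}\in\cP_0(n)\}$ is the intersection of $\{\{y,z\}\text{ open}\}$ with $\{0\xleftrightarrow{\{w_1<z_1\}}y\}$, and since the edge $\{y,z\}$ joins a vertex of $\{w_1<z_1\}$ to a vertex outside it, these two events depend on disjoint edge sets and are independent. Hence $\P_{p_c}(\{y,z\}\in\cP_0(n))=p_c\,\P_{p_c}(0\xleftrightarrow{\{w_1<z_1\}}y)$, and summing over admissible edges,
\[
P_{p_c}(n)=p_c\!\!\sum_{\substack{\{y,z\}\in\B\\ y_1<n\le z_1}}\!\!\P_{p_c}\bigl(0\xleftrightarrow{\{w_1<z_1\}}y\bigr).
\]
Because the models have range $L$, only edges with $n\le z_1\le n+L-1$ contribute, and for each such edge $y$ lies within $\ell_1$-distance $L$ of $S_n$; so every summand is the critical two-point function, \emph{inside} the half-space $\{w_1<z_1\}$, between the origin (at depth $z_1-1\ge n-1$ from the bounding hyperplane) and a vertex $y$ at bounded depth $\le L-1$. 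For the nearest-neighbour model this collapses to $P_{p_c}(n)=p_c\sum_{y\in S_{n-1}}\P_{p_c}(0\xleftrightarrow{\{w_1\le n-1\}}y)$, i.e.\ $p_c$ times the expected number of vertices on the boundary face of a half-space that are joined, within that half-space, to a fixed vertex at depth $n-1$; after a reflection and translation this is precisely a sum of critical half-space two-point functions between an interior point and the boundary layer, to which \cite{CH20} applies directly.

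I would then insert the Chatterjee--Hanson estimates, which say that the critical half-space two-point function between a vertex at depth $a$ and a vertex at depth $b$ from the boundary, at mutual distance $\rho$, is of order $\rho^{-(d-2)}\bigl(\tfrac{(a+1)(b+1)}{\rho^2}\wedge 1\bigr)$, the behaviour of the Dirichlet Green function. For the \textbf{upper bound}, with $a\asymp n$ and $b=O(1)$, each summand is $\preceq n\,\rho^{-d}$ with $\rho$ the distance from $0$ to $y$; splitting the sum over the boundary hyperplane according to whether the lateral displacement $\ell$ from the foot of the perpendicular is $\le n$ or $>n$, the first region contributes $\preceq n^{d-1}\cdot n\cdot n^{-d}\asymp1$ and the second contributes $\preceq n\sum_{\ell>n}\ell^{d-2}\ell^{-d}=n\sum_{\ell>n}\ell^{-2}\asymp1$; hence $P_{p_c}(n)\preceq1$ uniformly in $n$. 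For the \textbf{lower bound}, the matching lower estimate $\P_{p_c}(0\xleftrightarrow{\text{half-space}}y)\succeq n\,\rho^{-d}$, valid for $y$ on the boundary face with lateral displacement $\le Cn$, summed over the $\asymp n^{d-1}$ such $y$, yields $P_{p_c}(n)\succeq1$. (The $n=1$ lower bound is trivial anyway, since the origin lies in $S_0$ and $|\cP_0(1)|\ge\1(\{0,e_1\}\text{ open})$.)

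The genuine difficulty is not in the summations, which are routine, but in matching the geometry here --- a point deep inside a half-space connected, within that half-space, to its boundary layer --- to the precise statements available in \cite{CH20}: reflecting and translating so that the bounding hyperplane becomes $\{w_1=0\}$, tracking the exact position of the first accessible layer, carrying the $y\sim z$ and $n\le z_1\le n+L-1$ sums in the general finite-range case (which cost only a bounded, $L$-dependent factor), and absorbing any ``$\rho$ sufficiently large'' hypotheses into the finitely many small-distance terms, each trivially $O(1)$. I would emphasise that the only analytic input beyond the critical two-point estimate~\eqref{eq:twopointassumption} is the half-space two-point function bound of \cite{CH20}; in particular the one-arm results of \cite{KN11}, used in the sequel to pass from this proposition to a tail bound on the \emph{total} number of pioneers, are not needed for the statement above, and no lace expansion (on $\Z^d$ or on a finite graph) is invoked here.
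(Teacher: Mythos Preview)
Your reduction of $P_{p_c}(n)$ to a sum of half-space two-point functions is correct and matches the paper's \eqref{eq:pioneersP(i)def}--\eqref{eq:Pbardef}. The gap is in the input you take from \cite{CH20}: the bound
\[
\P_{p_c}\bigl(x \xleftrightarrow{H} y\bigr) \preceq \langle x-y\rangle^{-(d-2)}\left(\frac{(x_1+1)(y_1+1)}{\langle x-y\rangle^{2}}\wedge 1\right)
\]
for general $x,y\in H$ is \emph{not} proved in \cite{CH20}. What is actually available there is \eqref{eq:CH_oneboundary}--\eqref{eq:CH_bothboundary}: the decay is $\langle x-y\rangle^{-(d-1)}$ when one point lies on the boundary and $\langle x-y\rangle^{-d}$ when both do. The sharper ``Dirichlet Green function'' estimate with the factor $(x_1+1)(y_1+1)$ is precisely the content of \cref{rk:CH_extension}, where the paper states explicitly that if this bound were proven then \cref{prop:totalpioneersexpectation} would follow by direct summation, but that it remains an open problem; even the strengthened version in \cite{CHS22} is not sharp enough for this purpose.

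With the bound that is actually available, namely $\P_{p_c}(0 \xleftrightarrow{H} y)\preceq \rho^{-(d-1)}$ for $y$ on the boundary, your far-region sum becomes $\sum_{\ell>n}\ell^{d-2}\ell^{-(d-1)}=\sum_{\ell>n}\ell^{-1}$, which diverges logarithmically rather than converging to $O(1)$. This is exactly the content of \cref{lem:pioneers_expectation_log}, which establishes only $\bar P^*(n)\preceq\log(n+2)$. To close the gap the paper needs a second, independent idea: a supermultiplicative-type inequality $\bar P^*(2n+\ell)\geq c\,\bar P^*(n)^2$ (\cref{lem:supermultiplicative}), proved by a diagrammatic ``good pivotal vertex'' construction. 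The combination forces $\bar P^*$ to be bounded, since otherwise it would grow doubly exponentially, contradicting the logarithmic bound. Your proposal misses this second step entirely because it presumes a half-space estimate that would render it unnecessary.
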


Note that the lower bound $P_{p_c}(n) \geq p_{c}^{L-1}$
holds in every dimension by taking $p\uparrow p_c$ in the estimate \eqref{eq:pioneers_Fekete} above; the main content of the proposition is that a matching upper bound holds in the high-dimensional case.

\medskip

To ease notation, we will prove the upper bound of \cref{prop:totalpioneersexpectation} only for nearest-neighbour percolation. The general proof for finite-range models is very similar but substantially more involved
as one must introduce various additional summations to most calculations. This assumption will be in force for the remainder of
Section~\ref{sec:number-pioneers}.
We write $P(n)=P_{p_c}(n)$ and $\P=\P_{p_c}$ to lighten notation.
Recall that $H$ denotes the half-space $\{(n,x) : n \geq 0, x\in \Z^{d-1}\}$,
and that the edge $\{(n-1,x),(n,x)\}$ belongs to $\cP_0(n)$ if and only if it is open and $(n-1,x)$ is connected to $0$ inside the halfspace lying to the left of $(n-1,x)$.
We again
write $\{x \xleftrightarrow{A} y\}$ to mean that $x$ and $y$ are connected by an open path using only vertices of $A$.
By \eqref{e:EXr},
\begin{equation}
    P(n) = \E|\cP_0(n)|
    =
    \sum_{x\in \Z^{d-1}} p_c \cdot
    \P\Bigl((0,0) \xleftrightarrow{S_{- \infty, n-1}} (n-1,x)\Bigr) .
\end{equation}
By translation and reflection symmetry, this gives (with term-by-term equality)
 \begin{align}
    P(n)
	&=
    \sum_{x\in \Z^{d-1}} p_c\cdot\P\Bigl((0,x) \xleftrightarrow{H} (n-1,0) \Bigr)
    .
 \end{align}
A further translation by $(0,-x)$, followed by replacement of $-x$ by $x$, gives
 \begin{align}
\label{eq:pioneersP(i)def}
    P(n)
	&=
    \sum_{x\in \Z^{d-1}} p_c\cdot\P\Bigl((0,0) \xleftrightarrow{H} (n-1,x)\Bigr).
 \end{align}
This equality makes it convenient for us to consider for each $n\geq 0$ the quantity
\begin{equation}
\label{eq:Pbardef}
\bar P(n):= \frac{1}{p_c} P(n+1)
= \sum_{x\in \Z^{d-1}}\P\Bigl((0,x) \xleftrightarrow{H} (n,0)\Bigr)
=  \sum_{x\in \Z^{d-1}}\P\Bigl((0,0) \xleftrightarrow{H} (n,x)\Bigr)
\end{equation}
instead of $P(n)$ itself. A very similar proof to that of \cref{lem:pioneers_submult} yields that $\bar P$ is submultiplicative in the sense that $\bar P(n+m) \leq \bar P(n)\bar P(m)$ for each $n,m \geq 0$.

In order to upper bound $P(n)$, we will prove a complementary supermultiplicative-type estimate on $\bar P(n)$ via diagrammatic methods. For each $n\geq 0$, define
\begin{equation}
    \bar P^*(n)=\max_{0\leq k \leq n} \bar P(k).
\end{equation}
We deduce \cref{prop:totalpioneersexpectation} from the following two estimates.

\begin{prop}
\label{lem:supermultiplicative}
Let $d>6$ and suppose
 that \eqref{eq:twopointassumption} holds.
There exist positive constants $c>0$ and $\ell \in \N$ such that
 \begin{equation}
    \bar P^*(2n+\ell) \geq c\bar P^*(n)^2
 \end{equation}
 for every $n \ge 0$.
\end{prop}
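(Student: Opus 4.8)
The plan is to first reduce the statement and then prove the reduced inequality by gluing two half-space connections across the hyperplane $S_m$, using the Harris--FKG inequality to sidestep the absence of independence. For the reduction, note that $n\mapsto\bar P^*(n)$ is non-decreasing and that $\bar P^*(n)=\bar P(k^\star)$ for some $k^\star\le n$; since $2k^\star+\ell\le 2n+\ell$, it suffices to produce constants $c>0$ and $\ell\in\N$, independent of $m$, with
\[
\bar P(2m+\ell)\ \ge\ c\,\bar P(m)^2\qquad\text{for every }m\ge 0,
\]
and then to apply this with $m=k^\star$. The additive constant $\ell$ is there only to leave a bounded ``buffer'' between the two halves of the construction.

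For the core inequality, let $C$ be the cluster of $(0,0)$ inside $H=H_0$, let $\mathcal R=\{y\in\Z^{d-1}:(m,y)\in C\}$, so that $\E|\mathcal R|=\bar P(m)$ by \eqref{eq:Pbardef}, and for each $y$ let $D_y$ be the cluster of $(m,y)$ in the percolation restricted to the edges lying inside $H_m=\{w_1\ge m\}$. Since $H_m\subseteq H_0$, the inclusion $D_y\subseteq C$ holds whenever $y\in\mathcal R$, so $\bigcup_{y\in\mathcal R}(D_y\cap S_{2m+\ell})\subseteq C\cap S_{2m+\ell}$, and a Bonferroni bound gives
\[
\bar P(2m+\ell)\ =\ \E\big[|C\cap S_{2m+\ell}|\big]\ \ge\ \sum_{y\in\Z^{d-1}}\E\big[|D_y\cap S_{2m+\ell}|\,\1(y\in\mathcal R)\big]\ -\ \sum_{\substack{y,y'\in\Z^{d-1}\\ y\ne y'}}\E\big[|D_y\cap D_{y'}\cap S_{2m+\ell}|\,\1(y,y'\in\mathcal R)\big].
\]
The reason for choosing this particular decomposition is that $\{y\in\mathcal R\}$ is an increasing event and $|D_y\cap S_{2m+\ell}|$ is an increasing random variable, so Harris--FKG bounds the first (positive) sum below by $\sum_y\P(y\in\mathcal R)\,\E|D_y\cap S_{2m+\ell}|=\bar P(m)\,\bar P(m+\ell)\succeq\bar P(m)^2$, where $\E|D_y\cap S_{2m+\ell}|=\bar P(m+\ell)$ follows by translating $H_m$ back to $H_0$. (Passing through a single fixed intermediate point instead of all of $\mathcal R$ would only give $\bar P(2m+\ell)\succeq\bar P(m)$, which is far too weak; the multiplicity of $\mathcal R$ is essential.) Thus the whole matter reduces to showing that the overlap sum is at most, say, $\tfrac12 c\,\bar P(m)^2$.

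Bounding this overlap sum is the main obstacle, and the step where the diagrammatic estimates built on the Chatterjee--Hanson half-space two-point bounds \cite{CH20} are needed. Each term $\E[|D_y\cap D_{y'}\cap S_{2m+\ell}|\,\1(y,y'\in\mathcal R)]$ forces $(0,0)$, $(m,y)$, $(m,y')$ and a common point of $S_{2m+\ell}$ to be connected in a prescribed topology, so by the tree-graph and BK inequalities it is dominated by a sum over branch points of products of two-point functions; carrying out the sum over $y$ and $y'$ produces a half-space ``bubble'' in $\|y-y'\|$ that is summable precisely because $d>6$, multiplied by half-space line-sums of the form $\sum_{z\in S_{2m+\ell}}\P(v\xleftrightarrow{H_m}(2m+\ell,z))$. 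The delicate point is that these line-sums are only polynomially (in $m$) controlled --- and a priori only in terms of $\bar P^*$ itself --- so one must estimate them carefully, via the half-space decay of \cite{CH20} together with the companion estimate of this subsection (bootstrapping if necessary), in order to conclude that the overlap sum is indeed negligible next to $\bar P(m)^2$. With that in hand, the displayed inequality yields $\bar P(2m+\ell)\ge c\,\bar P(m)^2$, and hence the proposition.
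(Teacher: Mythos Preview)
Your reduction to $n$ with $\bar P(n)=\bar P^*(n)$ is the same as the paper's, and your FKG lower bound for the main term $\sum_y \E[|D_y\cap S_{2m+\ell}|\,\1(y\in\mathcal R)]\ge \bar P(m)\bar P(m+\ell)$ is correct. The approach, however, diverges from the paper precisely at the step you flag as ``the main obstacle,'' and there the proposal has a genuine gap.

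The problem is that nothing in your construction makes the overlap term \emph{small} relative to the main term. Observe that $D_y\cap D_{y'}\ne\varnothing$ forces $(m,y)\xleftrightarrow{H_m}(m,y')$, an event whose probability the Chatterjee--Hanson boundary estimate \eqref{eq:CH_bothboundary} controls by $\langle y-y'\rangle^{-d}$; summing this over $y'$ yields a fixed finite constant, not something tending to zero. Your buffer $\ell$ only shifts the target hyperplane and does not penalise the overlap event at all: the overlap lives on $S_m$, not on $S_{2m+\ell}$. Consequently, tracing through the tree-graph/BK bound you sketch gives at best
\[
\text{Overlap}\ \preceq\ C_0\,\bar P^*(m)^2
\]
for some absolute $C_0$, with no mechanism to make $C_0$ small. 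Since your main-term constant is of order $p_c^{\ell}$ (from $\bar P(m+\ell)\ge p_c^{\ell}\bar P(m)$), increasing $\ell$ actually \emph{worsens} the comparison. The appeal to ``bootstrapping if necessary'' does not help: you cannot assume $\bar P(m)$ is large, since boundedness of $\bar P$ is exactly the ultimate goal.

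The paper resolves this by a genuinely different device. Rather than FKG plus inclusion--exclusion on overlapping clusters, it inserts a \emph{non-connection} event $\{0\nxleftrightarrow{H_{-n}}\ell e_1\}$ between the two halves. Conditioning on the cluster $K_{0,n}$ of $0$ then makes the second connection (from $\ell e_1$ to $y$ in $H_\ell$) exactly independent of the first, and the error term becomes the probability that this second connection occurs \emph{only via} $K_{0,n}$. That forces a diagram in which a path from $\ell e_1$ re-enters the first cluster, producing an open-triangle factor that decays like $\ell^{-(d-6)+o(1)}$; choosing $\ell$ large then drives the error below $\tfrac12\bar P^*(n)^2$ (Lemma~\ref{lem:separated_pivotals}). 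A short ``surgery'' argument (Lemma~\ref{lem:ABC}) then converts the non-connection event into a good-pivotal event, which by \eqref{eq:goodpivotalsformula} is dominated by $\bar P(2n+\ell)$. The parameter $\ell$ is thus essential, not a mere buffer: it is the small parameter that separates main term from error.
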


\begin{lemma}
\label{lem:pioneers_expectation_log}
Let $d>6$ and suppose
that \eqref{eq:twopointassumption} holds.  Then
$\bar P^*(n)
 \preceq \log(n+2)$
for every $n\geq 0$.
\end{lemma}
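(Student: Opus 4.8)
Since $\bar P^*(n)=\max_{0\le k\le n}\bar P(k)$ and $t\mapsto\log(t+2)$ is increasing, it suffices to show that $\bar P(n)\preceq\log(n+2)$ for every $n\ge 0$. Note that this already asserts the non-obvious fact that $\bar P(n)<\infty$: bounding the half-space connection by the full-space two-point function gives only $\bar P(n)\le\sum_{x\in\Z^{d-1}}\tau_{p_c}((0,0),(n,x))$, and the right-hand side diverges since $\sum_{x\in\Z^{d-1}}\langle(n,x)\rangle^{-(d-2)}=\infty$. The point is that the restriction to the half-space $H=\{w:w_1\ge 0\}$ suppresses connections emanating from the boundary point $(0,0)$, and it is exactly this suppression --- quantified by the half-space two-point function estimates of Chatterjee and Hanson \cite{CH20} --- that the proof must exploit.

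Directly from the definition \eqref{eq:Pbardef},
\[
\bar P(n)=\sum_{x\in\Z^{d-1}}\P_{p_c}\big((0,0)\xleftrightarrow{H}(n,x)\big)\;\le\;\sum_{x\in\Z^{d-1}}\tau^{H}_{p_c}\big((0,0),(n,x)\big),
\]
where $\tau^{H}_{p_c}$ denotes the half-space two-point function. The plan is then to insert the Chatterjee--Hanson upper bound on $\tau^{H}_{p_c}((0,0),(n,x))$, which is of ``image'' type: it carries a factor reflecting the distance of each endpoint to the boundary hyperplane $S_0$, so that a connection originating at the boundary point $(0,0)$ is penalised relative to $\langle(n,x)\rangle^{-(d-2)}$. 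If it is more convenient to apply that estimate with both endpoints strictly interior to the half-space, one first peels off the initial segment of the path lying inside $S_0$: this segment is a connection within a copy of $\Z^{d-1}$ at the parameter $p_c=p_c(\Z^d)$, which is strictly subcritical for the corresponding $(d-1)$-dimensional model (since $p_c(\Z^d)<p_c(\Z^{d-1})$), so its contribution, summed over the exit point, is an exponentially decaying and hence finite constant; one is then left to estimate $\sum_{x}\P_{p_c}((1,z)\xleftrightarrow{\{w_1\ge 1\}}(n,x))$ by the Chatterjee--Hanson bound applied with a near-boundary source.

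It then remains to carry out the sum over $x\in\Z^{d-1}$. Organising it dyadically according to the transverse distance $|x|$ relative to $n$, the half-space suppression makes each dyadic shell contribute at most a bounded amount, while the number of relevant shells is $O(\log n)$, which gives $\bar P(n)\preceq\log(n+2)$. The main obstacle is precisely this last step: one must import the Chatterjee--Hanson estimate in a form uniform in $n$ and in the transverse coordinate and with the source effectively on the boundary, and then control the transverse summation so that the boundary suppression is fully used and the sum converges (to $O(\log n)$, rather than diverging as the full-space bound would). The bookkeeping of the BK/union-bound decomposition, and for finite-range models the additional local summations, are routine. We remark that a sharper accounting of the transverse sum in fact yields $O(1)$, consistent with Proposition~\ref{prop:totalpioneersexpectation}; the logarithmic bound is all that is needed at this stage, with the constant bound recovered afterwards by combining it with the supermultiplicative estimate of Proposition~\ref{lem:supermultiplicative}.
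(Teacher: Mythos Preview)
Your outline has the right starting point but a genuine gap in the tail of the transverse sum. The Chatterjee--Hanson estimate actually available for a source on the boundary is \eqref{eq:CH_oneboundary}, namely $\P\bigl((0,0)\xleftrightarrow{H}(n,x)\bigr)\preceq \langle (n,x)\rangle^{-(d-1)}$; it is \emph{not} an ``image'' bound carrying separate factors of the distances of the two endpoints to $S_0$. With this bound, each dyadic shell $\{|x|\asymp 2^k n\}$ in $\Z^{d-1}$ has $\asymp (2^k n)^{d-1}$ points, each contributing $\asymp (2^k n)^{-(d-1)}$, so every shell contributes order $1$ --- and there are infinitely many shells as $k\to\infty$, not $O(\log n)$ of them. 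Thus the sum $\sum_{x\in\Z^{d-1}}\langle (n,x)\rangle^{-(d-1)}$ diverges, and your dyadic accounting does not terminate. The full image bound you allude to is precisely the unproven estimate discussed in \cref{rk:CH_extension}; if it were available one would get $\bar P(n)=O(1)$ directly, which is the whole point of that remark.

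What the paper does to close this gap is a separate (non-obvious) argument for the far-field region $|x|>R:=(n+1)^d$. There one writes, by Harris--FKG,
\[
\P\bigl((0,x)\xleftrightarrow{H}(n,0)\bigr)\;\le\;\P\bigl((0,0)\xleftrightarrow{H}(n,0)\bigr)^{-1}\,\P\bigl((0,0)\xleftrightarrow{H}(0,x)\bigr),
\]
and then invokes the \emph{lower} bound \eqref{eq:CH_lower} on the first factor (giving $\succeq n^{-(d-1)}$) together with the \emph{boundary-to-boundary} upper bound \eqref{eq:CH_bothboundary} on the second (giving $\preceq \langle x\rangle^{-d}$). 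The resulting bound $n^{d-1}\langle x\rangle^{-d}$ is summable over $|x|>R$ with total $\preceq n^{d-1}/R\preceq 1$. The logarithm then comes solely from the intermediate annulus $n\le |x|\le R$, where \eqref{eq:CH_oneboundary} gives $\sum_{r=n}^{R}r^{-1}\asymp\log(R/n)\asymp\log n$. Your digression about peeling off the initial segment inside $S_0$ (using $p_c(\Z^d)<p_c(\Z^{d-1})$) is harmless but unnecessary, since \eqref{eq:CH_oneboundary} already applies with the source on $S_0$; it does nothing to address the tail issue above.
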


We now show how \cref{prop:totalpioneersexpectation} follows from
\cref{lem:supermultiplicative,lem:pioneers_expectation_log}.  In brief,
\cref{lem:supermultiplicative} implies that if $\bar P^*$ is unbounded then
it must grow exponentially rapidly.  This contradicts
\cref{lem:pioneers_expectation_log}, so $\bar P^*$ must be bounded, as desired.

\begin{proof}[Proof of \cref{prop:totalpioneersexpectation} given \cref{lem:supermultiplicative,lem:pioneers_expectation_log}]
Let $c>0$ and $\ell \in \N$ be the constants from \cref{lem:supermultiplicative}. If there exists $n\geq \ell$ such that $\bar P^*(n) \geq 2/c$ then we have by induction that
\begin{equation}
\bar P^*(3^kn) \geq \bar P^*(2 \cdot 3^{k-1}n+\ell) \geq \frac{1}{c}2^{2^k}
\end{equation} for every $k\geq 1$. This contradicts \cref{lem:pioneers_expectation_log}, and so we must in fact have that $\bar P^*(n)<2/c$ for every $n\geq \ell$ and hence for every $n\geq 0$ as claimed.
\end{proof}

We now prove \cref{lem:pioneers_expectation_log},
which was used above in the proof of \cref{prop:totalpioneersexpectation}
and which will also be used in the proof of \cref{lem:supermultiplicative}.
The proof is based on the upper bounds
\begin{align}
\P(x \xleftrightarrow{H} y) &\preceq \langle x-y \rangle^{-d+1} &&\text{ for every $x\in \Z^{d}$ with $x_1=0$ and every $y\in H$,}
\label{eq:CH_oneboundary}
\\
\P(x \xleftrightarrow{H} y) &\preceq \langle x-y \rangle^{-d} &&\text{ for every $x,y\in \Z^{d}$ with $x_1=y_1=0$.}
\label{eq:CH_bothboundary}
\end{align}
of Chatterjee and Hanson
\cite[Theorems~7.2 and 1.1(b)]{CH20}, as well as their lower bound \cite[Theorem~1.1(b)]{CH20}
\begin{equation}
\P(x \xleftrightarrow{H} y) \succeq \langle x-y \rangle^{-d+1}
\text{ for every $x\in \Z^{d}$
with $x_1=0$ and every $y\in H$ with $\langle y-x \rangle \leq 2y_1$}.
\label{eq:CH_lower}
\end{equation}
The above bounds are valid for $d>6$ assuming
that \eqref{eq:twopointassumption} holds.

\begin{rk}
\label{rk:CH_extension}
For $d>2$, and given $x,y \in H$ with $y=(y_1,\ldots,y_d)$, we set
$\bar y = (-y_1,y_2,\ldots,y_d)$.
By the method of images (see, e.g., \cite[Proposition~8.1.1]{LL10}),  the half-space
lattice Green function is given by $G_H(x,y)=G(x,y)-G(x,\bar y)$ where the
unrestricted lattice
Green function $G(x,y)$ is asymptotic to a multiple of $|x-y|^{2-d}$.  It is
natural to assume that the critical two-point function has the same behaviour, which suggests
an extended version
\begin{equation}
\P(x \xleftrightarrow{H} y) \preceq \frac{(x_1+1)(y_1+1)}{\langle x-y \rangle^{d}} \qquad \text{ for every $x,y\in H$}
\end{equation}
of the Chatterjee--Hanson bounds
which we believe
to be sharp when
$x_1 \vee y_1 \le K\veee{x-y}$ for some fixed $K>0$.
If this bound were proven, it would be possible to deduce \cref{prop:totalpioneersexpectation} directly by summation.
Although \cite[Theorem~6]{CHS22} proves a strengthened form of the Chatterjee--Hanson half-space two-point function estimate, the
strengthened version is not sharp when both points lie near the boundary, and it remains
an open problem to improve the estimate for the half-space two-point function to an extent where it
could be used to prove our critical pioneers estimate \cref{prop:totalpioneersexpectation} via direct summation.
\end{rk}

\begin{proof}[Proof of \cref{lem:pioneers_expectation_log}]
It suffices to prove that $\bar P(n) \preceq \log (n+2)$ for each
$n \ge 0$.
Let $R=(n+1)^d$.  By \eqref{eq:Pbardef} and \eqref{eq:CH_oneboundary},
\begin{align}
    \bar P(n)
    =
    \sum_{x\in \Z^{d-1}} \P\Bigl((0,x) \xleftrightarrow{H} (n,0)\Bigr)
    &\preceq \sum_{x \in \Lambda_n^{d-1}} (n+1)^{-d+1}
    +
    \sum_{x \in \Lambda_{R}^{d-1} \setminus \Lambda_n^{d-1}} \langle x\rangle^{-d+1}
    \nnb & \qquad
    + \sum_{x \in \Z^{d-1} \setminus \Lambda_{R}^{d-1}} \P\Bigl((n,0) \xleftrightarrow{H} (0,x)\Bigr).
\end{align}
To control the final term, we use the Harris-FKG inequality, \eqref{eq:CH_bothboundary} and \eqref{eq:CH_lower} to obtain that
\begin{align}
\sum_{x \in \Z^{d-1} \setminus \Lambda_{R}^{d-1}} \P\Bigl((0,x) \xleftrightarrow{H} (n,0)\Bigr) &\leq
\sum_{x \in \Z^{d-1} \setminus \Lambda_{R}^{d-1}}
\P\Bigl((0,0) \xleftrightarrow{H} (n,0)\Bigr)^{-1}
\cdot \P\Bigl((0,0) \xleftrightarrow{H} (0,x)\Bigr)\nnb
&\preceq \sum_{x \in \Z^{d-1} \setminus \Lambda_{R}^{d-1}} (n+1)^{d-1}\langle x \rangle^{-d}.
\end{align}
Putting these bounds together and using that $|\{x \in \Z^{d-1}:\langle x \rangle =r\}|=O((r+1)^{d-2})$ for every $r\geq 0$, we deduce that
\begin{align}\bar P(n)=\sum_{x\in \Z^{d-1}} \P\Bigl((0,x) \xleftrightarrow{H} (n,0)\Bigr)
&\preceq 1 +
\sum_{r=n}^R r^{-1}
+ \sum_{r=R}^\infty (n+1)^{d-1}r^{-2}\nnb
&\preceq 1+\log \frac{R+1}{n+1} + \frac{(n+1)^{d-1}}{R} \preceq \log(n+2),
\end{align}
and the proof is complete.
\end{proof}

\subsubsection{Proof of \cref{lem:supermultiplicative}}

In this section, we prove \cref{lem:supermultiplicative}.  As a first step,
we make the following definition.

\begin{defn}
Let $e_1=(1,0,\ldots,0)$ be the unit vector in the horizontal direction. Recall that for each $k\in \Z$, $S_k$ denotes the hyperplane $S_k = \{(k,x) : x\in \Z^{d-1}\}=\{x\in \Z^d :x_1=k\}$ and $H_k$ denotes the halfspace $H_k = \bigcup_{i\geq k} S_i$.
Given $0 \leq k < n$, $x\in S_k$ and $y\in S_{n}$, we say that $x$ is a \emph{good pivotal vertex} for the event $\{0 \xleftrightarrow{H_0} y\}$ if the following hold:
\begin{enumerate}
\item The edge $\{x,x+e_1\}$ is open.
\item $0$ is connected to $x$ in $H_0$ off of the edge $\{x,x+e_1\}$.
\item $x+e_1$ is connected to $y$ in $H_{k+1}$.
\item $0$ is not connected to $y$ in $H_0$ off of the edge $\{x,x+e_1\}$.
\end{enumerate}
\end{defn}

We claim that if $0$ is connected to $y$ in $H_0$ then for each $0\leq k<n$ there is at most one good pivotal vertex $x\in S_k$ for the event  $\{0 \xleftrightarrow{H_0} y\}$. Indeed, if $x$ is a good pivotal vertex then any open path from $0$ to $y$ in $H_0$ must pass through the edge $\{x,x+e_1\}$.
If $x,z\in S_k$ were distinct good pivotal vertices then there would exist simple open paths $\gamma_1$ and $\gamma_2$ connecting $0$ to $y$ in $H_0$ such that $\gamma_1$ visits $S_k$ for the last time at $x$ and $\gamma_2$ visits $S_k$ for the last time at $z$. The concatenation of the portion of $\gamma_1$ up until its visit to $z$ with the portion of $\gamma_2$ after it visits $z$ would therefore be an open simple path connecting $0$ and $y$ in $H_0$ that avoids $x$, contradicting the assumption that $x$ is a good pivotal vertex.

The fact that there is at most one good pivotal vertex implies by \eqref{eq:Pbardef} that
\begin{align}
\bar P(n)=\sum_{y\in S_{n}} \P(0 \xleftrightarrow{H_0} y) &\geq \sum_{x\in S_k} \sum_{y\in S_{n}}  \P(0 \xleftrightarrow{H_0} y,\, \text{$x$ a good pivotal vertex for this event})\nonumber\\
&= \frac{p_c}{1-p_c}\sum_{x\in S_k}\sum_{y\in S_{n}}\P(0 \xleftrightarrow{H_0} x,\, x \nxleftrightarrow{H_{0}} x+e_1, \text{ and } x+e_1 \xleftrightarrow
{H_{k+1}} y)
\end{align}
for every $0 \leq k < n$.
By symmetry,
we have equivalently that
\begin{align}
\label{eq:goodpivotalsformula}
\bar P(n+k)
&\geq \frac{p_c}{1-p_c}\sum_{y\in S_{n}}\sum_{x\in S_{-k}}\P(x \xleftrightarrow{H_{-k}} 0,\, 0 \nxleftrightarrow{H_{-k}} e_1, \text{ and } e_1 \xleftrightarrow{H_{1}} y)
\end{align}
for every $n \geq 1$ and $k\geq 0$.

\medskip

To make use of this inequality, we will first prove the following lemma.
Like many results in high-dimensional percolation, its proof relies on
a bound on the open \emph{triangle diagram}
\begin{equation}
    {\sf T}_p(x) = \sum_{y,z\in \Z^d} \tau_p(y)\tau_p(z-y)\tau_p(x-z)
\end{equation}
at the critical value $p=p_c$.  The triangle diagram was introduced by Aizenman and
Newman in 1984 \cite{AN84} and the finiteness of ${\sf T}_{p_c}(x)$ was proved
in \cite{HS90a} for sufficiently large $d$ for the nearest-neighbour model and for $d>6$
for sufficiently spread-out models, and extended in \cite{FH17} to the nearest-neighbour model
in dimensions $d \ge 11$.  Although historically the
proof of
\eqref{eq:twopointassumption} relied on this finiteness of the triangle diagram, a posteriori
\eqref{eq:twopointassumption} yields (for $d>6$)
\begin{equation}
\label{e:triangle-bd}
    {\sf T}_{p_c}(x) \preceq \sum_{y,z\in \Z^d} \veee{y}^{2-d}\veee{z-y}^{2-d}\veee{x-z}^{2-d}
    \preceq \xvee^{6-d}
\end{equation}
via the elementary convolution estimate \cite[Proposition~1.7]{HHS03}.

Indeed, \cite[Proposition~1.7]{HHS03} states more generally that for each $a,b>0$ with $a+b<d$ there exists a constant $C=C(d,a,b)$ such that
\begin{equation}
\label{e:ab_convolution}
    \sum_{y\in \Z^d} \veee{y}^{a-d}\veee{x-y}^{b-d}
    \leq  C\xvee^{a+b-d}
\end{equation}
for every $x\in \Z^d$, and it follows by applying this estimate twice that
for each $a,b,c>0$ with $a+b+c<d$ there exists a constant $C=C(d,a,b,c)$ such that
\begin{equation}
\label{e:abc_convolution}
    \sum_{y,z\in \Z^d} \veee{y}^{a-d}\veee{z-y}^{b-d} \veee{x-z}^{c-d}
    \leq  C\xvee^{a+b+c-d}
\end{equation}
for every $x\in \Z^d$. The following proof will in fact apply \eqref{e:ab_convolution} with $a,b,c = 2+\eps$ rather than the usual triangle estimate \eqref{e:triangle-bd}.

\begin{lemma}
\label{lem:separated_pivotals}
Let $d>6$ and suppose that \eqref{eq:twopointassumption} holds.
There exists a positive constant $\ell$ such that
\begin{equation}
\label{eq:goodpivotals3}
\sum_{x\in S_{-n}}\sum_{y\in S_{n+\ell}}
\P(x \xleftrightarrow{H_{-n}} 0,\, 0 \nxleftrightarrow{H_{-n}} \ell e_1, \,
\ell e_1 \xleftrightarrow{H_{\ell}} y)
\geq \frac{1}{2} \bar P(n)^2
\end{equation}
for every $n\geq 0$ such that $\bar P (n)= \bar P^*(n)$.
\end{lemma}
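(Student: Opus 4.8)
The plan is an inclusion--exclusion argument that isolates the cost of the middle non-connection constraint. First I would use \eqref{eq:Pbardef} together with the translation and reflection symmetries of the model to rewrite
\[
\bar P(n) = \sum_{x\in S_{-n}}\P\bigl(x\xleftrightarrow{H_{-n}}0\bigr) = \sum_{y\in S_{n+\ell}}\P\bigl(\ell e_1\xleftrightarrow{H_\ell}y\bigr),
\]
and abbreviate $A_x = \{x\xleftrightarrow{H_{-n}}0\}$ and $C_y = \{\ell e_1\xleftrightarrow{H_\ell}y\}$, writing $B$ for the middle event $\{0\nxleftrightarrow{H_{-n}}\ell e_1\}$ appearing in \eqref{eq:goodpivotals3} and $B^c$ for its complement. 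Since $A_x$ and $C_y$ are increasing events, the Harris--FKG inequality gives $\P(A_x\cap C_y)\ge\P(A_x)\P(C_y)$, so that $\sum_{x,y}\P(A_x\cap C_y)\ge\bar P(n)^2$. Writing $\P(A_x\cap B\cap C_y) = \P(A_x\cap C_y)-\P(A_x\cap B^c\cap C_y)$ and summing, the left-hand side of \eqref{eq:goodpivotals3} is at least $\bar P(n)^2 - E$, where $E := \sum_{x\in S_{-n}}\sum_{y\in S_{n+\ell}}\P(A_x\cap B^c\cap C_y)$. It therefore suffices to choose the constant $\ell$ so that $E\le\tfrac12\bar P(n)^2$ whenever $\bar P(n) = \bar P^*(n)$.

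The heart of the argument is the bound on $E$. On the event $A_x\cap B^c\cap C_y$ the vertices $x$, $0$, $\ell e_1$, $y$ lie in one cluster, joined by the chain of connections $x\leftrightarrow 0$, $0\leftrightarrow\ell e_1$, $\ell e_1\leftrightarrow y$ in the relevant half-spaces, with the $0$--$\ell e_1$ connection forced to avoid the bridging segment. I would apply the BK inequality twice, splitting at the shared vertex $0$ and then at the shared vertex $\ell e_1$, to produce branch vertices $u$ and $v$ and a disjoint-occurrence decomposition of the schematic form
\[
\P\bigl(A_x\cap B^c\cap C_y\bigr)\le\sum_{u,v}\P\bigl(x\xleftrightarrow{H_{-n}}u\bigr)\,\tau_{p_c}(u)\,\tau_{p_c}(v-u)\,\tau_{p_c}(\ell e_1-v)\,\P\bigl(v\xleftrightarrow{H_\ell}y\bigr),
\]
where some of the interior two-point functions may instead be kept as half-space ones and estimated via the Chatterjee--Hanson bounds \eqref{eq:CH_oneboundary}--\eqref{eq:CH_bothboundary}. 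Performing the sums over $x\in S_{-n}$ and $y\in S_{n+\ell}$ first, the identities of the previous paragraph together with translation show that $\sum_{x\in S_{-n}}\P(x\xleftrightarrow{H_{-n}}u)$ and $\sum_{y\in S_{n+\ell}}\P(v\xleftrightarrow{H_\ell}y)$ are values of $\bar P$; by the submultiplicativity $\bar P(n+m)\le\bar P(n)\bar P(m)$ recorded after \eqref{eq:Pbardef}, by the hypothesis $\bar P(n) = \bar P^*(n)$, and by the logarithmic bound of \cref{lem:pioneers_expectation_log}, each of these two sums is at most $\bar P^*(n)$ times a factor growing at most logarithmically in the first coordinate of $u$ (respectively $v$). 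Absorbing those logarithms into slightly larger polynomial exponents --- i.e.\ applying the convolution estimates \eqref{e:ab_convolution}--\eqref{e:abc_convolution} with exponents $2+\eps$ in place of $2$ --- the residual sum over $u,v$ is a triangle-type convolution evaluated at $\ell e_1$, hence bounded by $C\langle\ell e_1\rangle^{6+O(\eps)-d} = C\ell^{6+O(\eps)-d}$. Since $d>6$, taking $\eps$ small gives $E\le C\ell^{-\alpha}\bar P^*(n)^2 = C\ell^{-\alpha}\bar P(n)^2$ for some $\alpha>0$, and choosing $\ell$ large enough that $C\ell^{-\alpha}\le\tfrac12$ completes the proof.

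I expect this diagrammatic estimate to be the main obstacle. The delicate point is to track which half-space or slab restriction applies to each two-point-function factor so that the sums over the ``far'' variables $x$ and $y$ genuinely collapse to controlled multiples of $\bar P^*(n)$ --- a naive replacement of half-space two-point functions by full-space ones produces divergent hyperplane sums --- and to confirm that the branch vertices are localized enough that the leftover convolution really decays as a negative power of $\ell$. This is the standard but fiddly bookkeeping of high-dimensional percolation, with the Chatterjee--Hanson half-space bounds and the elementary convolution estimates \eqref{e:ab_convolution}--\eqref{e:abc_convolution} doing the essential work.
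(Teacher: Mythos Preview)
Your overall strategy---write the left side as $\bar P(n)^2$ minus an error and show the error decays in $\ell$---matches the paper's, but the two proofs diverge at the crucial point of how the error term is produced and bounded.

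The paper does \emph{not} use FKG followed by a tree-graph BK bound on $E=\sum_{x,y}\P(A_x\cap B^c\cap C_y)$. Instead it conditions on the cluster $K_{0,n}$ of $0$ in $H_{-n}$: when $\ell e_1\notin K_{0,n}$, the event $\{\ell e_1\xleftrightarrow{H_\ell}y\}$ decomposes exactly into ``off $K_{0,n}$'' (independent of $K_{0,n}$) and ``only via $K_{0,n}$''. This yields an \emph{exact identity}
\[
\P(A_x\cap B\cap C_y)=\P(A_x)\P(C_y)-\sum_{A\ni x}\P(K_{0,n}=A)\,\P(\ell e_1\xleftrightarrow{H_\ell}y\text{ only via }A),
\]
so the error term $T_{n,\ell}$ is automatically smaller than your $E$ (indeed $E-T_{n,\ell}=\sum_{x,y}\operatorname{Cov}(\1_{A_x},\1_{C_y})\ge 0$). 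More importantly, the ``only via $A$'' structure forces a re-entry vertex $a\in H_\ell$ lying on an open $H_\ell$-path from $\ell e_1$, so the factor $\P(\ell e_1\xleftrightarrow{H_\ell}a)$ is a genuine boundary-to-interior half-space connection to which the Chatterjee--Hanson bound \eqref{eq:CH_oneboundary} applies, contributing an exponent $-(d-1)$ rather than $-(d-2)$. This extra power is not cosmetic: after absorbing the $(i+1)\bar P^*(i)$ factor from the $y$-sum, the triple convolution has total exponent $6+O(\eps)$ and closes for all $d>6$.

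Your schematic, by contrast, writes the $\ell e_1$--$v$ factor as a full-space $\tau_{p_c}(\ell e_1-v)$. With that choice, once you combine with the $y$-sum factor $\sum_{y\in S_{n+\ell}}\P(v\xleftrightarrow{H_\ell}y)\preceq (v_1-\ell+1)\bar P^*(v_1-\ell)\bar P^*(n)$, the effective exponent on $\langle \ell e_1-v\rangle$ is only $-(d-3-\eps)$, and the triple convolution has total exponent $7+O(\eps)$, which fails for $d=7$. You note that ``some of the interior two-point functions may instead be kept as half-space ones,'' and this is exactly what is required---but producing that half-space factor through a direct BK/tree-graph decomposition of $A_x\cap B^c\cap C_y$ is precisely the delicate point. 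Your ``split at $0$, then at $\ell e_1$'' does not obviously yield a branch vertex $v$ for which the $\ell e_1$--$v$ connection stays inside $H_\ell$ and is edge-disjoint from the other four pieces; the path $\gamma_2:0\to\ell e_1$ in $H_{-n}$ and the path $\gamma_3:\ell e_1\to y$ in $H_\ell$ can interact in ways that do not fit the single tree topology you display (and the other two four-leaf topologies must be handled as well). The cluster-conditioning device the paper borrows from Kozma--Nachmias is what manufactures the needed half-space factor cleanly; without it, the ``standard but fiddly bookkeeping'' you allude to becomes the whole problem.
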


\begin{proof}
Fix $n\geq 0$. We follow a variation on the strategy of \cite[Lemma 3.2]{KN09}, illustrated in Figure~\ref{fig:inclusion-exclusion}.
Let $K_{0,n}$ denote the cluster of $0$ in $H_{-n}$ and let $\sC_0$ be the set of finite connected subsets of $\Z^d$ containing $0$.
By conditioning on $K_{0,n}$, we see
that
\begin{equation}
\P(x \xleftrightarrow{H_{-n}} 0,\,
0 \nxleftrightarrow{H_{-n}}
\ell e_1, \,
\ell e_1 \xleftrightarrow{H_{\ell}} y) =
\sum_{\substack{A \in \sC_0 \\ A \ni x }}
\P(K_{0,n} = A, \ell e_1 \xleftrightarrow{H_{\ell}} y)
\mathbbm{1}(\ell e_1 \notin A)
\end{equation}
for each $\ell\geq 1$, $x\in S_{-n}$, and $y\in S_{n+\ell}$. Note moreover that if $A \in \sC_0$ is such that $y \notin A$ then
\begin{equation}
\P(K_{0,n} = A, \ell e_1 \xleftrightarrow{H_{\ell}} y)\mathbbm{1}(\ell e_1 \notin A) = \P(K_{0,n} = A, \ell e_1 \xleftrightarrow{H_{\ell}} y \text{ off $A$})
\end{equation}
where we write ``$\ell e_1 \xleftrightarrow{H_{\ell}} y$ off $A$'' to mean that there is an open path from $\ell e_1$ to $y$ in $H_\ell$ that does not visit any vertex of $A$, including at its endpoints. Since the events $\{K_{0,n} = A\}$ and $\{\ell e_1 \xleftrightarrow{H_{\ell}} y \text{ off $A$}\}$ depend on disjoint sets of edges (namely, those edges with at least one endpoint in $A$ and those edges with neither endpoint in $A$), these two events are independent and we deduce that
\begin{equation}
\P(K_{0,n} = A, \ell e_1 \xleftrightarrow{H_{\ell}} y)\mathbbm{1}(\ell e_1 \notin A) = \P(K_{0,n} = A)\P(\ell e_1 \xleftrightarrow{H_{\ell}} y \text{ off $A$}).
\end{equation}
Next, we observe that
\begin{equation}
\P(\ell e_1 \xleftrightarrow{H_{\ell}} y \text{ off $A$}) = \P(\ell e_1 \xleftrightarrow{H_{\ell}} y) - \P(\ell e_1 \xleftrightarrow{H_{\ell}} y \text{ only via $A$}),
\end{equation}
where we write ``$\ell e_1 \xleftrightarrow{H_{\ell}} y$ only via $A$'' to mean that there is an open path from $\ell e_1$ to $y$ in $H_\ell$ but every such path must visit a vertex of $A$. (This holds in particular if $\ell e_1$ is connected to $y$ in $H_\ell$ and belongs to the set $A$.) It follows that
\begin{align}
\P(x \xleftrightarrow{H_{-n}} 0,\,
0 \nxleftrightarrow{H_{-n}}
\ell e_1, \,
\ell e_1 \xleftrightarrow{H_{\ell}} y)
& =
\sum_{\substack{A \in \sC_0 \\ A \ni x
}} \P(K_{0,n} = A)\P(\ell e_1 \xleftrightarrow{H_{\ell}} y)
\nnb & \qquad
-\sum_{\substack{A \in \sC_0 \\
A \ni x
}} \P(K_{0,n} = A)\P(\ell e_1 \xleftrightarrow{H_{\ell}} y \text{ only via $A$})
\end{align}
and hence that
\begin{align}
\label{eq:onlyon_expansion}
\P(x \xleftrightarrow{H_{-n}} 0,\,
0 \nxleftrightarrow{H_{-n}}
\ell e_1,
\ell e_1 \xleftrightarrow{H_{\ell}} y)
&= \P(
x \xleftrightarrow{H_{-n}} 0  ) \P(\ell e_1 \xleftrightarrow{H_\ell} y)\nnb
& \qquad
-\sum_{\substack{A \in \sC_0 \\
A \ni x
}} \P(K_{0,n} = A)\P(\ell e_1 \xleftrightarrow{H_{\ell}} y \text{ only via $A$})
\end{align}
for every $\ell \geq 1$, $x\in S_{-n}$, and $y\in S_{n+\ell}$.

\begin{figure}[t]
\centering
\includegraphics[width=\textwidth]{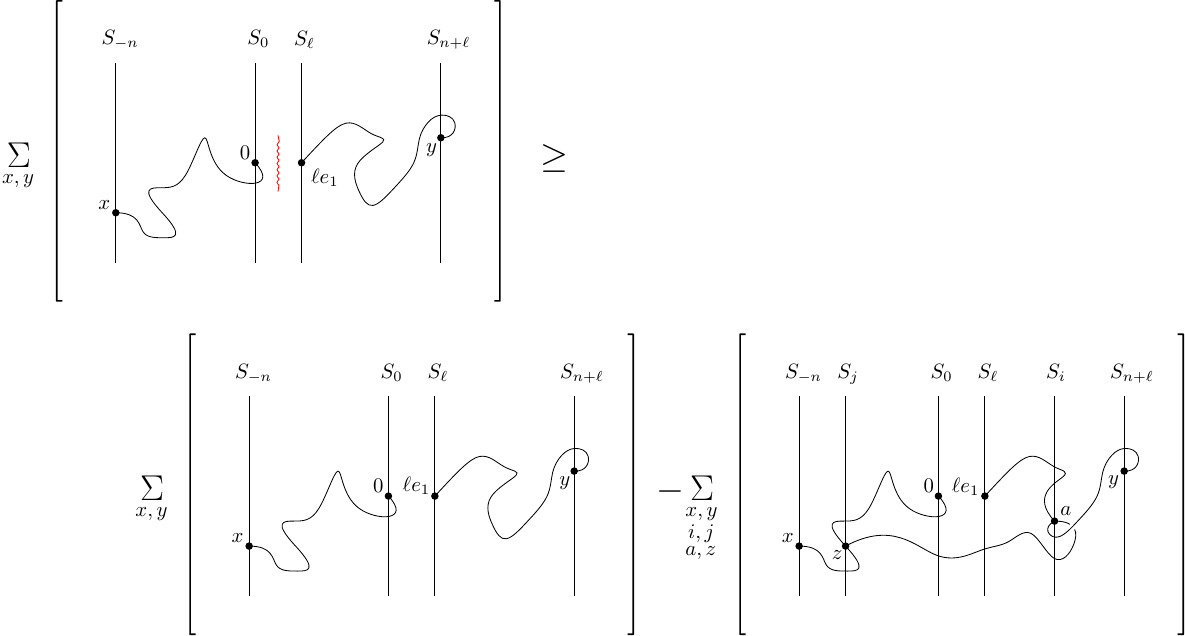}
\caption{Schematic illustration of the diagrammatic estimate used to prove \cref{lem:separated_pivotals}. The squiggly red line indicates that $0$ and $\ell e_1$ are not connected by an open path in the half-space $H_{-n}$. To prove the lemma, it suffices to prove that the second diagrammatic sum on the right hand side is much smaller than the first when the separation parameter $\ell$ is large.}
\label{fig:inclusion-exclusion}
\end{figure}

Our goal is to prove that the sum over $x\in S_{-n}$ and $y\in S_{n+\ell}$
of the left-hand side of \eqref{eq:onlyon_expansion} is bounded below by
$\frac 12 \bar P^*(n)^2$, assuming that $\bar P(n) = \bar P^*(n)$.
For the first term on the right-hand side, it follows from \eqref{eq:Pbardef}
that
\begin{equation}
    \sum_{x\in S_{-n}} \sum_{y\in S_{n+\ell}}
    \P(
    x \xleftrightarrow{H_{-n}} 0
    ) \P(\ell e_1 \xleftrightarrow{H_\ell} y)
    =
    \bar P(n)^2.
\end{equation}
It therefore suffices to prove that we can choose $\ell$ large in order to obtain
\begin{equation}
\label{e:pivgoal}
    \sum_{x\in S_{-n}} \sum_{y\in S_{n+\ell}}
    \sum_{\substack{A \in \sC_0 \\ A \ni x}} \P(K_{0,n} = A)\P(\ell e_1 \xleftrightarrow{H_{\ell}} y \text{ only via $A$})
    \le \frac 12 \bar P(n)^2
\end{equation}
for every $n \ge 0$ such that $\bar P(n) = \bar P^*(n)$.
The remainder of the proof is devoted to establishing \eqref{e:pivgoal}.

As a first step, we observe by the BK inequality that
\begin{align}
\P(\ell e_1 \xleftrightarrow{H_{\ell}} y \text{ only via $A$})
&\leq \sum_{a\in A}\P\left(\{\ell e_1 \xleftrightarrow{H_{\ell}} a\} \circ\{a \xleftrightarrow{H_{\ell}} y\}\right)
\nnb & \leq
\sum_{a\in A}\P\left(\ell e_1 \xleftrightarrow{H_{\ell}} a\right)\P\left(a \xleftrightarrow{H_{\ell}} y\right)
\end{align}
for every $\ell \geq 1$ and $y\in S_{n+\ell}$. Indeed, if the event on the left-hand side occurs then there must exist a simple open path connecting $\ell e_1$ to $y$ in $H_\ell$ that passes through $A$ at some point $a$, and the portions of this path before and after visiting $a$ are disjoint witnesses for the events
$\{\ell e_1 \xleftrightarrow{H_{\ell}} a\}$ and $\{a \xleftrightarrow{H_{\ell}} y\}$. It follows that
\begin{align}
&\sum_{\substack{A \in \sC_0 \\
A \ni x
}} \P(K_{0,n} = A)\P(\ell e_1 \xleftrightarrow{H_{\ell}} y \text{ only via $A$})
\nnb & \qquad
\leq \sum_{\substack{A \in \sC_0 \\
A \ni x
}} \P(K_{0,n} = A) \sum_{a\in A} \P\left(\ell e_1 \xleftrightarrow{H_{\ell}} a\right)\P\left(a \xleftrightarrow{H_{\ell}} y\right)
\nonumber
\\
& \qquad
=\sum_{a\in H_{\ell}}  \P\left(0 \xleftrightarrow{H_{-n}} x , \,
0 \xleftrightarrow{H_{-n}} a \right)\P\left(\ell e_1 \xleftrightarrow{H_{\ell}} a\right)\P\left(a \xleftrightarrow{H_{\ell}} y\right)
\label{e:piv1}
\end{align}
for each $\ell \geq 1$, $x\in S_{-n}$, and $y \in S_{n+\ell}$.
Now, if $0$ is connected to both $x$ and $a$ in $H_{-n}$ there must exist $z\in H_{-n}$ such that the events $\{0 \xleftrightarrow{H_{-n}} z\}$, $\{z \xleftrightarrow{H_{-n}} x\}$, and $\{z \xleftrightarrow{H_{-n}} a\}$ all occur disjointly, so it follows by the BK inequality that
\begin{equation}
\label{e:piv2}
    \P\left(0 \xleftrightarrow{H_{-n}} x , \,
    0 \xleftrightarrow{H_{-n}} a \right)
    \le
    \sum_{z\in H_{-n}}
    \P\left(0 \xleftrightarrow{H_{-n}} z \right)
    \P\left(z \xleftrightarrow{H_{-n}} x \right)
    \P\left(z \xleftrightarrow{H_{-n}} a \right).
\end{equation}

We insert \eqref{e:piv2} into \eqref{e:piv1} and insert the result into \eqref{e:pivgoal}.
The sums over $x$ and $y$ can then be performed explicitly, since these variables each
appear in just one factor.
For the sum over $x$, we use the fact that for $z \in S_{j}$ with $j \ge -n$ we have
\begin{equation}
    \sum_{x\in S_{-n}}
    \P\left(z \xleftrightarrow{H_{-n}} x \right)
    = \bar P (n+j).
\end{equation}
For the sum over $y$, we use that
\begin{multline}
\sum_{y\in S_{r}}\P(0 \xleftrightarrow{H_{-m}} y) \leq \sum_{y\in S_{r}} \sum_{k = -(r \wedge 0)}^m \sum_{w \in S_{-k}} \P(\{0 \xleftrightarrow{H_{-k}} w\} \circ \{w \xleftrightarrow{H_{-k}} y\}) \\\leq \sum_{k= -(r\wedge 0)}^m \bar P(k) \bar P({r}+k)
\end{multline}
for every $m \geq 0$  and $r\geq -m$, which follows by decomposing a simple open path from $0$ to $y$ according to its left-most point and using the BK inequality.
The result is
\begin{align}
\label{e:piv3}
    &\sum_{x\in S_{-n}} \sum_{y\in S_{n+\ell}}
    \sum_{\substack{A \in \sC_0 \\ A \ni x}} \P(K_{0,n} = A)\P(\ell e_1 \xleftrightarrow{H_{\ell}} y \text{ only via $A$})
    \nnb & \quad \le
\sum_{i=\ell}^\infty \sum_{a\in S_i} \sum_{j=-n}^\infty \sum_{z\in S_j} \P\left(0 \leftrightarrow z \right) \bar P(n+j)
\P\left(z \leftrightarrow a \right)\P\left(\ell e_1 \xleftrightarrow{H_{\ell}} a\right)
\nnb &\hspace{8.5cm}
\cdot \sum_{k= 0 \vee (i-n-\ell)}^{i-\ell} \hspace{-1em}\bar P(k) \bar P(n+\ell-i+k)
\nnb & \quad
\leq
 \bar P^*(n)^2 \sum_{i=\ell}^\infty \sum_{a\in S_i} \sum_{j=-n}^\infty \sum_{z\in S_j} \P\left(0 \leftrightarrow z \right) \P\left(z \leftrightarrow a \right)
\P\left(\ell e_1 \xleftrightarrow{H_{\ell}} a\right) (i+1)\bar P(j\vee 0) \bar P^*(i)
,
\end{align}
where in the last step we used $\bar P(k) \le \bar P^*(i)$ and $\bar P(n+\ell-i+k) \le \bar P^*(n)$ for $k\leq i -\ell$,
as well as the submultiplicative property of $\bar P$ to see
that $\bar P(n+j ) \le \bar P^*(n) \bar P(j \vee 0)$.

To estimate the right-hand side of \eqref{e:piv3}, we use
\cref{lem:pioneers_expectation_log} to bound $\bar P(j\vee 0)$ and $\bar P^*(i)$, and
\eqref{eq:twopointassumption} to bound $\P\left(0 \leftrightarrow z \right)$ and
$\P\left(z \leftrightarrow a \right)$.  Also, we use the
half-space
estimate \eqref{eq:CH_oneboundary} to see that
\begin{align}
\P\left(\ell e_1 \xleftrightarrow{H_{\ell}} a\right) (i+1) \bar P^*(i)
&\preceq \langle \ell e_1 - a \rangle^{-d+1} (i+1) \log (i \vee 2).
\end{align}
Here $i \ge \ell$, so $i \le \ell + \veee{\ell e_1-a}$ and therefore
\begin{align}
\label{e:piv4}
\P\left(\ell e_1 \xleftrightarrow{H_{\ell}} a\right) (i+1) \bar P^*(i)
 & \preceq  \langle \ell e_1 - a \rangle^{-d+2} \log(\langle \ell e_1 - a \rangle  \vee 2) + \ell \log ( \ell \vee 2) \langle \ell e_1 -a \rangle^{-d+1}
\nonumber
 \\
 &\preceq \langle \ell e_1 - a \rangle^{-d+ 2+1/4} + \ell^{5/4} \langle \ell e_1 -a \rangle^{-d+1}.
\end{align}
Thus, with the left-hand side of our goal \eqref{e:pivgoal} temporarily written as $T_{n,\ell}$,
using the crude bound $\bar P(j \vee 0) \preceq \log (j\vee 2) \preceq \langle z \rangle^{1/4}$ yields that
\begin{multline}
    T_{n,\ell} \preceq
   \bar P^*(n)^2 \sum_{i=\ell}^\infty \sum_{a\in S_i} \sum_{j=-n}^\infty \sum_{z\in S_j}
   \veee{z}^{-d+2+1/4} \veee{z-a}^{-d+2}
   \langle \ell e_1-a\rangle^{-d+2+1/4}
\\
   +\ell^{5/4}\bar P^*(n)^2\sum_{i=\ell}^\infty \sum_{a\in S_i} \sum_{j=-n}^\infty \sum_{z\in S_j} \langle z \rangle^{-d+2+1/4} \langle z-a\rangle^{-d+2} \langle \ell e_1-a\rangle^{-d+1}
\end{multline}
from which for $d \ge 7$ \eqref{e:abc_convolution} yields
\begin{equation}
    T_{n,\ell} \preceq
    \bar P^*(n)^2 \left(
    \langle \ell e_1 \rangle^{-d+6+1/2} + \ell^{5/4} \langle \ell e_1 \rangle^{-d+5+1/4}\right) \preceq
    \ell^{-d+6+1/2} P^*(n)^2 \leq
    \ell^{-1/2} P^*(n)^2.
\end{equation}
Since this bound holds uniformly over $n\geq 0$ and $\ell \geq 1$, and since the prefactor $\ell^{-1/2}$
tends to zero as $\ell \to \infty$, we deduce that there exists a constant $\ell$ such that
\begin{equation}
    T_{n,\ell} \leq \frac{1}{2}\bar P^*(n)^2.
\end{equation}
This proves \eqref{e:pivgoal} and therefore completes the proof.
\end{proof}

Finally, we deduce \cref{lem:supermultiplicative} from \eqref{eq:goodpivotalsformula} and \cref{lem:separated_pivotals}.
In preparation for this,
inspired by \cite[Section~4]{Hutc20_locality} we define three events and prove a lemma
relating them, as follows.
 Fix any $n \ge 0$, $x\in S_{-n}$ and $y \in S_{n+\ell}$, where
$\ell$ is fixed as in \cref{lem:separated_pivotals}.  We define the event
\begin{equation}
    \sA(x,y)
    = \{x \xleftrightarrow{H_{-n}} 0,\, 0 \nxleftrightarrow{H_{-n}} \ell e_1, \,
    \ell e_1 \xleftrightarrow{H_{\ell}} y \}.
\end{equation}
Then \eqref{eq:goodpivotals3} can be rewritten more compactly as
\begin{equation}
\label{eq:goodpivotals3A}
\sum_{x\in S_{-n}}\sum_{y\in S_{n+\ell}}
\P(\sA(x,y))
\geq \frac{1}{2} \bar P(n)^2
\end{equation}
for every $n\geq 0$ such that $\bar P (n)= \bar P^*(n)$.

 Let $\eta$ be the left-directed horizontal geodesic connecting $\ell e_1$ to $0$, and for each $1\leq i \leq \ell$ let $\eta_i$ be the $i$th
 edge crossed by $\eta$.
  Given a Bernoulli bond percolation configuration $\omega$
  on $\Z^d$,
 let $\omega^i$ be the configuration obtained from $\omega$ by setting
\begin{equation}
    \omega^i(e) =\begin{cases} 1 & e \in \{ \eta_{j}
                                                            : 1\leq j  \leq i\}\\
\omega(e) & e \notin \{\eta_j : 1\leq j  \leq i\}.
\end{cases}
\end{equation}
In particular, $\omega^0=\omega$.
For each $1\leq i \leq \ell$, let $\sB_{i}(x,y)$ be the event that that $0$ and $\ell e_1$ are connected in $H_{-n}$ in $\omega^i$ but not in $\omega^{i-1}$,
 $0$ is connected to $x$ in $H_{-n}$ in $\omega^{i-1}$, and $\ell e_1$ is connected to $y$ in $H_\ell$ in  $\omega^{i-1}$.
Finally, for each $1\leq i \leq \ell$
let
\begin{equation}
\sC_{i}(x,y)= \left\{x \xleftrightarrow{H_{-n}} (\ell-i) e_1,\, (\ell-i)e_1 \nxleftrightarrow{H_{-n}} (\ell+1-i)e_1,
\,
(\ell+1-i)e_1 \xleftrightarrow{H_{\ell+1-i}} y\right\}.
\end{equation}
The events $\sB_i(x,y)$ and $\sC_i(x,y)$ are depicted in Figure~\ref{fig:BC}.

\begin{figure}[t]
\includegraphics[width=0.47\textwidth]{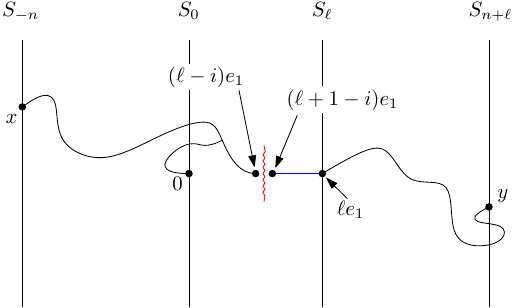}\hfill \includegraphics[width=0.47\textwidth]{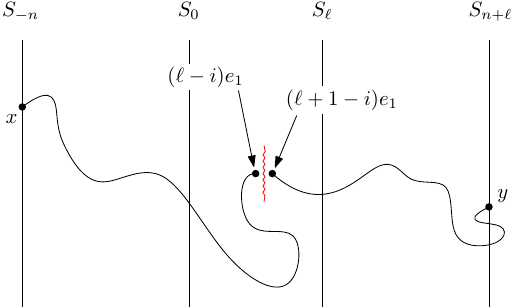}
\caption{Schematic illustrations of the events $\sB_i(x,y)$ (left) and $\sC_i(x,y)$ (right). The blue edges represent those that are forced to be open in $\omega^{i-1}$. The squiggly red line indicates that $(\ell-i)e_1$ and $(\ell+1-i)e_1$ lie in distinct clusters in the half-space $H_{-n}$.}
\label{fig:BC}
\end{figure}

\begin{lemma}
\label{lem:ABC}
With the above setup, and with $p=p_c$,
\begin{equation}
\P\left( \sA(x,y) \right) \leq \sum_{i=1}^\ell p_c^{-i+1} \P\bigl(\sC_{i}(x,y)) .
\end{equation}
\end{lemma}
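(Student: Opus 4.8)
The plan is to prove the lemma by a union bound over ``the first edge of the geodesic $\eta$ whose forcing creates the forbidden connection'', followed by an elementary change-of-measure estimate that pays a factor $p_c$ for each previously forced edge. Concretely I would proceed in three steps.

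First I would establish the set inclusion $\sA(x,y)\subseteq\bigcup_{i=1}^\ell\sB_i(x,y)$, which immediately yields $\P(\sA(x,y))\le\sum_{i=1}^\ell\P(\sB_i(x,y))$ by a union bound. On $\sA(x,y)$ the vertices $0$ and $\ell e_1$ are not connected in $H_{-n}$ in $\omega=\omega^0$, whereas in $\omega^\ell$ they are connected by the now-open geodesic $\eta$, all of whose vertices lie in $H_0\subseteq H_{-n}$. Since $\omega^0\le\omega^1\le\cdots\le\omega^\ell$ pointwise and the event $\{0\xleftrightarrow{H_{-n}}\ell e_1\}$ is increasing, there is a first index $i\in\{1,\dots,\ell\}$ at which this connection appears, so it is absent in $\omega^{i-1}$ and present in $\omega^i$. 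The two ``spectator'' events $\{0\xleftrightarrow{H_{-n}}x\}$ and $\{\ell e_1\xleftrightarrow{H_\ell}y\}$ are increasing and hold in $\omega\le\omega^{i-1}$, hence hold in $\omega^{i-1}$ too; together these three facts are precisely $\sB_i(x,y)$.

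Next I would show that $\sB_i(x,y)$ forces the (non-monotone) event $\sC_i(x,y)$ to hold \emph{in the modified configuration} $\omega^{i-1}$. In $\omega^{i-1}$ the edges $\eta_1,\dots,\eta_{i-1}$ are open, so $\ell e_1$ is connected to $(\ell+1-i)e_1$ through vertices all of first coordinate $\ge\ell+1-i$; in particular this connection takes place both in $H_{-n}$ and in $H_{\ell+1-i}$. Passing from $\omega^{i-1}$ to $\omega^i$ opens the single additional edge $\eta_i=\{(\ell+1-i)e_1,(\ell-i)e_1\}$ and, by hypothesis, this creates a connection between $0$ and $\ell e_1$ in $H_{-n}$ where there was none. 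A short cluster-surgery argument then shows this is possible only if, in $\omega^{i-1}$, the vertex $(\ell-i)e_1$ lies in the $H_{-n}$-cluster of $0$ while $(\ell+1-i)e_1$ lies in the (necessarily distinct) $H_{-n}$-cluster of $\ell e_1$. Combining with the spectator connections, and using $H_\ell\subseteq H_{\ell+1-i}$ for the connection from $\ell e_1$ to $y$, one reads off that $\omega^{i-1}$ satisfies $x\xleftrightarrow{H_{-n}}(\ell-i)e_1$, $(\ell-i)e_1\nxleftrightarrow{H_{-n}}(\ell+1-i)e_1$, and $(\ell+1-i)e_1\xleftrightarrow{H_{\ell+1-i}}y$; that is, $\omega^{i-1}\in\sC_i(x,y)$.

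Finally I would convert this into a probability bound. Set $F_i=\{\omega:\omega^{i-1}\in\sC_i(x,y)\}$, so Step 2 gives $\sB_i(x,y)\subseteq F_i$. The event $F_i$ does not depend on the values $\omega(\eta_1),\dots,\omega(\eta_{i-1})$, since $\omega^{i-1}$ ignores them, and on $F_i\cap\{\eta_1,\dots,\eta_{i-1}\text{ all open}\}$ one has $\omega=\omega^{i-1}\in\sC_i(x,y)$. Hence, by independence of these two events under $\P_{p_c}$,
\[
\P_{p_c}\bigl(\sC_i(x,y)\bigr)\ \ge\ \P_{p_c}\bigl(F_i\cap\{\eta_1,\dots,\eta_{i-1}\text{ all open}\}\bigr)\ =\ p_c^{\,i-1}\,\P_{p_c}(F_i)\ \ge\ p_c^{\,i-1}\,\P_{p_c}\bigl(\sB_i(x,y)\bigr),
\]
so $\P_{p_c}(\sB_i(x,y))\le p_c^{-i+1}\P_{p_c}(\sC_i(x,y))$; summing over $i$ and combining with Step 1 gives the claimed inequality. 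I expect the only real subtlety to be Step 2: verifying rigorously, via the clusters in $\omega^{i-1}$, that opening $\eta_i$ can produce the $0$–$\ell e_1$ connection in $H_{-n}$ \emph{only} by splicing the cluster of $0$ onto the cluster of $\ell e_1$ at exactly the two endpoints of $\eta_i$, and keeping disciplined track of which half-space each of the several required connections lives in. Everything else is bookkeeping with monotonicity, a union bound, and a one-line independence argument.
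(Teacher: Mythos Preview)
Your proposal is correct and follows essentially the same approach as the paper's proof: the paper also establishes the inclusion $\sA(x,y)\subseteq\bigcup_{i=1}^\ell\sB_i(x,y)$ via the first index $i$ at which the connection appears, then uses the inclusion $\sC_i(x,y)\supseteq\sB_i(x,y)\cap\{\omega(\eta_j)=1\text{ for }1\le j\le i-1\}$ together with independence of the two events on the right. Your Step~2 spells out the cluster-surgery argument (identifying which endpoint of $\eta_i$ lies in which cluster) more explicitly than the paper, which simply asserts the inclusion, but the logic is identical.
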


\begin{proof}
Given a configuration $\omega$, let $i$ be minimal such that $0$
and $\ell e_1$ are connected in $\omega^i$. When the event $\sA(x,y)$ holds,
$i$ cannot be zero, and hence must be between $1$ and $\ell$. Since the
clusters of $0$ and $\ell e_1$ are both larger in $\omega^{i-1}$ than they are in
$\omega$, we must have that $0$ is connected to $x$ in $H_{-n}$ in $\omega^{i-1}$, and
$\ell e_1$ is connected to $y$ in $H_{\ell}$ in $\omega^{i-1}$, which means that
$\sB_i(x,y)$ holds.
It follows that
\begin{equation}
\label{eq:ABsetinclusion}
\sA(x,y) \subseteq \bigcup_{i=1}^{\ell}
\sB_{i}(x,y).
\end{equation}
Since we also have the inclusion of events $\sC_{i}(x,y) \supseteq \sB_{i}(x,y) \cap \{\omega(\eta_j)=1$ for every  $1\leq j \leq i-1\}$,
 and since the two events on the right of this inclusion are independent, we have that
\begin{equation}
\P\bigl(\sC_{i}(x,y)) \ge
p_c^{i-1} \P\bigl( \sB_{i}(x,y)) .
\end{equation}
With \eqref{eq:ABsetinclusion}, this completes the proof.
\end{proof}

\begin{proof}[Proof of \cref{lem:supermultiplicative}]
It suffices to prove that there exist positive constants $c>0$ and $\ell \in \N$ such
that $\bar P^*(2n+\ell) \geq c\bar P^*(n)^2$  for every $n\geq 0$.
Let $\ell$ be as in \cref{lem:separated_pivotals},
and suppose that $n\geq 0$
has $\bar P(n)= \bar P^*(n)$.
Constants in this proof are permitted to depend on $\ell$.
In view of \eqref{eq:goodpivotals3A},
the desired inequality will follow, for such $n$, if we show that
\begin{align}
\label{e:sm1}
\sum_{x\in S_{-n}}\sum_{y\in S_{n+\ell}}
\P\left( \sA(x,y) \right)
 \preceq (\ell+1) \bar P^*(2n +\ell-1).
\end{align}
However this is in fact sufficient for general $n \geq 0$,
since we may take $0
\leq n'\leq n$ such that $\bar P(n')= \bar P^*(n)$ to then deduce that
\begin{equation}
\bar P^*(2n+\ell) \geq \bar P^*(2n'+\ell) \succeq \bar P(n')^2 = \bar P^*(n)^2
\end{equation}
for every $n\geq 0$ as claimed.

It remains to prove \eqref{e:sm1}.  Since both sides of the inequality are positive and the right hand side is finite by \cref{lem:pioneers_expectation_log}, it suffices to consider the case $n\geq 1$. By Lemma~\ref{lem:ABC},
\begin{equation}
\P\left(\sA(x,y)\right)  \preceq \sum_{i=1}^\ell \P\bigl(\sC_{i}(x,y)).
\end{equation}
By translation invariance applied to the event $\sC_i(x,y)$, this gives
\begin{align}
\label{eq:aftersurgery}
&\sum_{x\in S_{-n}}\sum_{y\in S_{n+\ell}}\P\left(\sA(x,y)  \right)
\nnb & \quad
\preceq \sum_{i=1}^\ell \sum_{x \in S_{-n-\ell+i}} \sum_{y\in S_{n+i-1}}\P\left(x \xleftrightarrow{H_{-n-\ell+i}} 0 ,\, 0 \nxleftrightarrow{H_{-n-\ell+i}} e_1,
\,
e_1 \xleftrightarrow{H_{1}} y \right)
 .
\end{align}
Using the assumption that $n\geq 1$, we have by \eqref{eq:goodpivotalsformula} that the right-hand side of \eqref{eq:aftersurgery} is bounded above by
\begin{align}
 &
 \sum_{i=1}^\ell \frac{1-p_c}{p_c} \bar P(2n+ \ell- i)
 \preceq (\ell+1) \bar P^*(2n+\ell-1)
\end{align}
This proves \eqref{e:sm1} and therefore completes the proof.
\end{proof}

\subsection{Proof of \cref{thm:totalpioneersexpectation_subcritical}}
\label{sec:subcritical_pioneers}

\subsubsection{Randomised algorithms and the OSSS inequality}
\label{sec:OSSS}

Our deduction of \cref{thm:totalpioneersexpectation_subcritical} from \cref{prop:totalpioneersexpectation} relies crucially on the OSSS inequality of O'Donnell, Saks, Schramm, and Servedio \cite{OSSS05}, which we now briefly review. This inequality has recently been recognised as a powerful and flexible tool in the study of critical and near-critical percolation models following the breakthrough work of Duminil-Copin, Raoufi, and Tassion \cite{DRT19}.
We build in particular on the techniques developed to apply this inequality to prove inequalities between critical exponents in
\cite{Hutc20}.

Let $E$ be a countable set.
  Informally, a \emph{decision tree} is a deterministic procedure for querying the values of $\omega \in \{0,1\}^E$ that starts by querying the value of some fixed element of $E$ and chooses which element of $E$ to query at each subsequent step as a function of the values it has already observed. Formally, a \emph{decision tree} is a function
  $T:\{0,1\}^E \to E^\N$ from subsets of $E$ to infinite $E$-valued sequences
  $T=(T_1,T_2,\ldots)$ such that
$T_1(\omega) = e_1$ for some $e_1 \in E$ not depending on $\omega$, and such that for each $n \geq 2$ there exists a function $S_n : (E\times \{0,1\})^{n-1} \to E$ such that
\begin{equation}
    T_n(\omega) = S_n\left[\left(T_i,\omega(T_i)\right)_{i=1}^{n-1}\right],
\end{equation}
where we think of $T_n(\omega)$ as the element of $E$ that is queried at time $n$ when given $\omega$ as an input to the procedure.

Let $\mu$ be a probability measure on $\{0,1\}^E$ and let $\omega$ be a random variable with law $\mu$.
For each decision tree $T$ and $n\geq 1$ we define $\cF_n(T)$ to be the $\sigma$-algebra generated by the random  variables $\{T_i(\omega) : 1 \leq i \leq n\}$ and define $\cF(T) =\bigcup_{n\geq 1} \cF_n(T)$. We say that $T$ \emph{computes} a measurable function $f:\{0,1\}^E\to \R$ if $f(\omega)$ is measurable with respect to the $\mu$-completion of the $\sigma$-algebra $\cF(T)$. This is equivalent by L\'evy's 0-1 law to the statement that
\begin{equation}
\mu\left[f(\omega) \mid \cF_n(T)\right] \xrightarrow[n\to \infty]{} f(\omega) \qquad \mu\text{-a.s.}
\end{equation}

To allow for exploration algorithms that are naturally described as parallel rather than serial algorithms, it is convenient to introduce the slightly more general notion of \emph{decision forests}.
A \emph{decision forest} is defined to be a collection of decision trees $F=\{T^i : i \in I\}$ indexed by a countable set $I$. Given a decision forest $F=\{T^i : i \in I\}$ and a probability measure $\mu$ on $\{0,1\}^E$ we let $\cF(F)$ be the smallest $\sigma$-algebra containing all of the $\sigma$-algebras $\cF(T^i)$ and say that a measurable function $f : \{0,1\}^E\to \R$ is \emph{computed by} $F$ if it is measurable with respect to the $\mu$-completion of the $\sigma$-algebra $\cF(F)$.

Let $E$ be a countable set, let $\mu$ be a probability measure on $E$, and let $F=\{T^i:i\in I\}$ be a decision forest on $E$.
For each $e\in E$, we define the \emph{revealment probability}
\begin{equation}
    \delta_e(F,\mu)
 = \mu\left(\text{there exists $i\in I$ and $n \geq 1$ such that } T_{n}^i(\omega)=e\right),
\end{equation}
 so that $\delta_e(F,\mu)$ is the probability that the status of $e$ is ever queried when implementing the decision forest $F$ on a sample from the measure $\mu$.
Finally, we define for each probability measure $\mu$ on $\{0,1\}^E$ and each pair of measurable functions $f,g:\{0,1\}^E\to \R$ the quantity
\begin{equation}
\CoVr_\mu[f,g] = (\mu \otimes \mu) \left[|f(\omega_1)-g(\omega_2)|\right]- \mu  \left[|f(\omega_1)-g(\omega_1)|\right]
\end{equation}
where $\omega_1,\omega_2$ are drawn independently from the measure $\mu$.
Thus,  if $f$ and $g$ are $\{0,1\}$-valued then
\begin{equation}
\label{eq:CoVr_Cov}
\CoVr_\mu[f,g] = 2 \Cov_\mu[f,g] = 2\mu\bigl(f(\omega)=g(\omega)=1\bigr)-2\mu\bigl(f(\omega)=1\bigr)\mu\bigl(g(\omega)=1\bigr).
\end{equation}

We are now ready to state the version of the OSSS inequality that we will use, which is a special case of \cite[Corollary~2.4]{Hutc20}.

\begin{theorem}[OSSS for decision forests]
\label{cor:OSSS_forest}
Let $E$ be a finite or countably infinite set and let $\mu$ be a product measure on $\{0,1\}^E$. Then for every pair of measurable, $\mu$-integrable functions $f,g : \{0,1\}^E \to \R$ and every decision forest $F$ computing $g$ we have that
\begin{equation}
\label{eq:OSSS}
 \sum_{e\in E} \delta_e(F,\mu) \Cov_\mu\left[f,\omega(e)\right] \geq \frac{1}{2}\left|\CoVr_\mu\left[f,g\right]\right|.
\end{equation}
\end{theorem}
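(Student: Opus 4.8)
The plan is to deduce \cref{cor:OSSS_forest} from the classical single-tree OSSS inequality of \cite{OSSS05} by \emph{serialising} the forest into one decision tree. Recall the single-tree statement in the form we need: if $T$ is a decision tree computing $g$ and $\mu$ is a product measure on $\{0,1\}^E$, then
\[
\sum_{e\in E}\delta_e(T,\mu)\,\bigl|\Cov_\mu[f,\omega(e)]\bigr|\;\ge\;\tfrac12\bigl|\CoVr_\mu[f,g]\bigr|
\]
for every pair of $\mu$-integrable $f,g\colon\{0,1\}^E\to\R$. This is the inequality of \cite{OSSS05} written for arbitrary (not necessarily uniform) product measures and in the $\CoVr$ normalisation used in \cite{DRT19,Hutc20}; it is proved by the usual hybrid-configuration telescoping, where one runs $T$ on $\omega$, progressively overwrites an independent fresh sample $\omega'$ with the observed values of $\omega$ on the coordinates queried by $T$, uses the fact that $T$ follows the same path on the two configurations to conclude that the value of $g$ is unchanged, and then estimates the telescoped increments one revealed coordinate at a time. (The covariances on the left of \eqref{eq:OSSS} are to be read in absolute value; this is immaterial for us since $f$ is increasing in all the applications we make of the inequality.)

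Given a forest $F=\{T^i:i\in I\}$ computing $g$ (recall that $I$ is countable), I would build a single decision tree $\widehat T$ by \emph{dovetailing} the trees $T^i$: after fixing an enumeration of $I$, one fixes a schedule --- for instance the usual diagonal one over the pairs $(i,k)$ --- that interleaves the queries of the $T^i$ so that, for each fixed $i$, infinitely many of the queries performed by $\widehat T$ are queries of $T^i$. The point to check is that $\widehat T$ obeys the recursive definition of a decision tree: when $\widehat T$ is about to perform the $k$-th query of some $T^i$, all of the first $k-1$ queries of $T^i$ together with their observed values have already been performed by $\widehat T$, so $T^i_k(\omega)$ is a function of the part of $\widehat T$'s history revealed so far, exactly as required. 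Since $\widehat T$ queries precisely the coordinates queried by the trees of $F$, one has $\delta_e(\widehat T,\mu)=\delta_e(F,\mu)$ for every $e\in E$; and the information generated by the queries of $\widehat T$ is the same as that generated by the queries of $F$, so $\cF(\widehat T)=\cF(F)$ and hence $\widehat T$ computes $g$ because $F$ does. Applying the single-tree inequality to $\widehat T$ and substituting these two identities yields \eqref{eq:OSSS}.

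Taking the single-tree OSSS inequality as a black box, there is no probabilistic estimate left to make, and I expect the only real work to be the bookkeeping of the serialisation when $I$ is infinite and some of the trees $T^i$ may never terminate: one should write out the diagonal schedule, verify at each stage that the coordinate to be queried next is indeed determined by the already-revealed portion of $\widehat T$, and be slightly careful about the $\mu$-completion in the definition of ``computes'' when identifying $\cF(\widehat T)$ with $\cF(F)$. This is the only obstacle; the statement is exactly \cite[Corollary~2.4]{Hutc20}.
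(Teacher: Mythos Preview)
The paper does not prove this theorem: it is simply quoted as ``a special case of \cite[Corollary~2.4]{Hutc20}'' and used as a black box, so there is no paper proof to compare against. Your serialisation argument---dovetailing the countable family of trees into a single decision tree $\widehat T$, checking that $\delta_e(\widehat T,\mu)=\delta_e(F,\mu)$ and $\cF(\widehat T)=\cF(F)$, and then invoking the single-tree OSSS inequality---is correct and is indeed the standard way to pass from trees to forests; you have also correctly flagged the absolute-value issue on the left-hand side (the paper's statement is as written, but in every application $f$ is increasing so the covariances are nonnegative).
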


See \cite{DRT19}
for an extension
of the OSSS inequality to \emph{monotonic measures} such as the law of the Fortuin-Kastelyn random-cluster model.

\subsubsection{Differential inequalities for Dini derivatives}

In order to discuss how the OSSS inequality leads to differential inequalities in the infinite-volume setting (without any need for finite-volume approximation and limit),
it is convenient to introduce the notion of \emph{Dini derivatives}; see, e.g., \cite{KK96} for further background. The \emph{lower-right Dini derivative} of a function $f:[a,b] \to \R$ is defined to be
\begin{equation}
\lrDini{x}f(x) =  \liminf_{\eps \downarrow  0} \frac{f(x+\eps)-f(x)}{\eps}
\end{equation}
for each $x \in [a,b)$.
In our setting, it is a classical and elementary fact \cite[Theorem 2.34]{Grim99} that if $A$ is an event depending on at most finitely many edges then $\P_p(A)$ is a polynomial in $p$ with derivative
\begin{equation}
\frac{d}{dp}\P_p(A) = \frac{1}{p(1-p)} \sum_{e\in E} \Cov\left[\omega(e),\1(A)\right].
\end{equation}
If $A$ is an \emph{increasing} event depending possibly on infinitely many edges, we still have the lower-right Dini derivative bound
\begin{equation}
\label{e:Dinilb}
\lrDini{p}\P_p(A) \geq \frac{1}{p(1-p)} \sum_{e\in E} \Cov\left[\omega(e),\1(A)\right].
\end{equation}
A detailed proof of this is given in \cite[Proposition 2.1]{Hutc20}.
Thus, the OSSS inequality allows us to prove lower bounds on derivatives of increasing events by exhibiting decision forests that compute these events and have small maximum revealment.

Lower bounds on the lower-right Dini derivatives of monotone functions can often be used in much the same way as bounds on the classical derivative of a differentiable function.
For example, the usual
logarithmic derivative formula
\begin{equation}
\label{e:Dinilog}
\lrDini{x} \log f(x) = \frac{1}{f(x)}\lrDini{x}f(x)
\end{equation}
remains valid.  Also,
if $f:[a,b]\to \R$ is increasing then
\begin{equation}
\label{e:DiniFTC}
f(b) - f(a) \geq \int_{a}^b \lrDini{x}f(x)  d x.
\end{equation}
Since every measurable function has measurable Dini derivatives
\cite[Theorem~3.6.5]{KK96}, the above integral is well-defined.

\subsubsection{Slightly subcritical pioneers: Proof of \cref{thm:totalpioneersexpectation_subcritical}}

\medskip

Our goal in this section is to study the distribution of the total number of $0$-pioneers $|\cP_0|$ in critical and slightly subcritical percolation.
The
main result is the following proposition, which strengthens \cref{thm:totalpioneersexpectation_subcritical}.

\begin{prop}
\label{prop:subcritical_pioneers}
Let $d>6$ and suppose that \eqref{eq:twopointassumption} holds.
There exists a positive constant $c$ such that
\begin{equation}
\E_{p_c-\eps}
|\cP_0| \preceq  \eps^{-1/2} \qquad \text{ and } \qquad
\P_{p_c-\eps}\bigl(
|\cP_0| \geq k\bigr) \preceq  k^{-2/3} \exp \left[ -c \eps^{3/2} k \right]
\end{equation}
for every $0 \leq \eps \leq p_c/2$ and $k\geq 1$.
\end{prop}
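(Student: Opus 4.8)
The plan is to prove \cref{prop:subcritical_pioneers} by establishing a differential inequality for the distribution of $|\cP_0|$ and then integrating it, using the critical estimate \cref{prop:totalpioneersexpectation} as the boundary input. First I would use \cref{prop:totalpioneersexpectation} to control the number of pioneers at $p_c$ more precisely: since $\E_{p_c}|\cP_0(n)| \asymp 1$ for every $n$, and since $\cP_0(n)$ is nonempty whenever $0 \leftrightarrow \partial\Lambda_n$, the Kozma--Nachmias one-arm bound $\P_{p_c}(0 \leftrightarrow \partial\Lambda_n) \asymp n^{-2}$ should be combined with a second-moment or BK-type argument to show that $\P_{p_c}(|\cP_0| \geq k) \preceq k^{-2/3}$ (this is \cref{lem:criticalpioneers} referenced in the overview). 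The heuristic is that $|\cP_0|$ is typically of order (radius)$^{?}$ and the radius has a $2/3$-power relationship with the pioneer count through the one-arm exponent; concretely, conditioned on reaching radius $n$, the expected number of pioneers near scale $n$ is $\asymp 1$, while reaching radius $n$ forces $\asymp n$ pioneers to exist but the probability of this is $\asymp n^{-2}$, giving the tail exponent $2/3$ after optimising $k \asymp n^3$-type bookkeeping.

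Second I would set up the OSSS/decision-forest machinery. For a truncated functional such as $f_k(\omega) = \1(|\cP_0| \geq k)$ (or better, work with $|\cP_0| \wedge k$ to keep things increasing and bounded), I would exhibit a decision forest that computes this event with small maximum revealment. The natural forest explores the cluster of the origin (and perhaps the clusters of a few auxiliary starting points or halfspace slabs) in a way that reveals an edge $e$ only if it is plausibly relevant; the revealment of an edge far out is then bounded by the probability that the origin's cluster reaches near $e$, which decays. Feeding this into \cref{cor:OSSS_forest} together with the Dini-derivative lower bound \eqref{e:Dinilb} and the logarithmic-derivative rule \eqref{e:Dinilog}, one obtains a differential inequality of the schematic form
\begin{equation}
\lrDini{p}\, \P_p(|\cP_0| \geq k) \;\succeq\; \frac{\P_p(|\cP_0| \geq k)}{\max_e \delta_e} \;\cdot\; \bigl(\text{something}\bigr),
\end{equation}
and, after the standard manipulations (as in \cite{DRT19,Hutc20}, and paralleling Menshikov's radius inequality \cite{Mens86}), this takes the form
\begin{equation}
\lrDini{p} \log \P_p\bigl(|\cP_0| \geq k\bigr) \;\geq\; \frac{c\, k}{\sum_{j=1}^{k} \P_p(|\cP_0| \geq j)} \;-\; C
\end{equation}
or a close variant (\cref{lem:differentialinequality}). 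The key is that the reciprocal of the maximum revealment is controlled by (a tail-sum or truncated expectation of) $|\cP_0|$ itself, so the inequality is self-improving.

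Third I would integrate the differential inequality from $p_c$ down to $p_c-\eps$. Writing $M_p(k) = \sum_{j=1}^k \P_p(|\cP_0|\geq j) \leq \E_p[|\cP_0|\wedge k]$ and using monotonicity in $p$, the bound $\P_{p_c}(|\cP_0|\geq j) \preceq j^{-2/3}$ gives $M_{p_c}(k) \preceq k^{1/3}$, and a Grönwall/bootstrap argument — first deducing $\E_{p_c-\eps}|\cP_0| \preceq \eps^{-1/2}$ from the $k\to\infty$ version (this matches the scaling $m(p)\asymp \eps^{1/2}$ since a pioneer count of order $\eps^{-1/2}$ corresponds to reaching radius $\asymp\eps^{-1/2}$), then inserting $M_p(k)\preceq \min(k^{1/3}, \eps^{-1/2})$ back into the differential inequality — yields $\log \P_{p_c-\eps}(|\cP_0|\geq k) \leq -c\eps^{3/2}k + O(\log k)$ in the regime $k \gtrsim \eps^{-3/2}$, and the trivial bound $\P(|\cP_0|\geq k)\leq \P_{p_c}(|\cP_0|\geq k)\preceq k^{-2/3}$ handles $k \lesssim \eps^{-3/2}$; combining gives the stated $k^{-2/3}\exp[-c\eps^{3/2}k]$. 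Finally, \cref{thm:totalpioneersexpectation_subcritical} follows because $P_p(n) = \E_p|\cP_0(n)| \leq \E_p|\cP_0|$ together with the submultiplicativity \eqref{eq:submult_simplified}: the bound $\E_{p_c-\eps}|\cP_0|\preceq \eps^{-1/2}$ forces the exponential decay rate of $P_p(n)$ in $n$ to be at least $c\eps^{1/2}$ (if $P_p(n_0) \leq 1/2$ for some $n_0 \preceq \eps^{-1/2}$, iterate), removing the subexponential correction present in \eqref{eq:pioneers_Fekete}.

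I expect the main obstacle to be the construction and revealment analysis of the decision forest computing $|\cP_0|$ (or its truncation), and in particular making the revealment bound quantitatively good enough — one needs the maximum revealment to be controlled by $\E_p[|\cP_0| \wedge k]$ (up to constants and polynomial factors that do not spoil the exponent), which requires care because pioneers are defined via a halfspace-entry condition that is not simply a function of the radius; the exploration must be organised (likely slab-by-slab in the $e_1$ direction, or via several trees indexed by the hyperplanes $S_n$) so that whether an edge is queried is governed by an event whose probability is comparable to a pioneer-count contribution. A secondary technical point is the first step, \cref{lem:criticalpioneers}: deducing the sharp $k^{-2/3}$ critical tail from the $\asymp 1$ layer-wise expectation and the $n^{-2}$ one-arm bound requires a clean argument (e.g. decomposing $|\cP_0|$ over scales and using that conditioning on reaching radius $n$ does not blow up the per-scale pioneer expectation), but this is more routine than the OSSS step.
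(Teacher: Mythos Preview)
Your outline matches the paper's proof: the critical tail $\P_{p_c}(|\cP_0|\geq k)\preceq k^{-2/3}$ is obtained exactly by the Markov/one-arm split you describe (take $n=\lceil k^{1/3}\rceil$ in $\P(|\cP_0|\geq k)\leq n/k + \P(0\leftrightarrow H_n)$), the differential inequality of \cref{lem:differentialinequality} has precisely the form you wrote, and the integration is the two-pass bootstrap you sketch (first bound the tail sum by $Ck^{1/3}$ to get the intermediate estimate $k^{-2/3}e^{-c\eps k^{2/3}}$, sum it for $\E_{p_c-\eps}|\cP_0|\preceq\eps^{-1/2}$, then reinsert this into the simplified inequality and integrate over $[p_c-2\eps,p_c-\eps]$ for the sharp $e^{-c\eps^{3/2}k}$).

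The point you correctly flag as the main obstacle --- building a forest with revealment controlled by $k^{-1}\E_p[|\cP_0|\wedge k]$ --- is resolved not by a slab-by-slab exploration of the origin's cluster (which would have revealment $1$ near the origin) but by a \emph{ghost field on edges}, exactly as in \cite{Hutc20}. One introduces an independent random set $\cG\subset\B$ with inclusion probability $1-e^{-1/k}$, takes $g(\omega,\cG)=\1(\cP_0\cap\cG\neq\emptyset)$, and lets the forest, for each horizontal edge $e$, first query $\cG(e)$ and then (only if $e$ is green) explore the halfspace cluster of the left endpoint of $e$. The percolation-revealment of any edge is then at most $1-e^{-1/k}+2\E_p[1-e^{-|\cP_0|/k}]\preceq k^{-1}\sum_{i=0}^k\P_p(|\cP_0|\geq i)$, and since $f=\1(|\cP_0|\geq k)$ is independent of $\cG$ the two-function OSSS of \cref{cor:OSSS_forest} gives the inequality directly.
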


The exponential tail bound on $|\cP_0|$ is not needed for the proofs of the main theorems but is included since
it
may be of independent interest.
Before proving this proposition, we show
how it implies \cref{thm:totalpioneersexpectation_subcritical}.

\begin{proof}[Proof of \cref{thm:totalpioneersexpectation_subcritical} given \cref{prop:subcritical_pioneers}]
Recall that $P_p(n) = \E_p |\cP_0(n)|$.
We have already observed that the lower bound of \cref{thm:totalpioneersexpectation_subcritical}
holds,
and we have already proved the desired upper bound when $p=p_c$
in \cref{prop:totalpioneersexpectation}.  It therefore
suffices to prove
that there exists a positive constant $c$ such that
\begin{equation}
 P_p(n) \preceq \exp\left[ - c(p_c-p)^{1/2} n \right]
\end{equation}
for every $n\geq 1$ and $p_c/2\leq p < p_c$.
Fix $p$ in this interval.
As discussed
below Definition~\ref{def:pioneer},
$\cP_0(n) \cap \cP_0(m) = \emptyset$ when $|n-m| \geq L$ and hence
by Proposition~\ref{prop:subcritical_pioneers}
\begin{equation}
\frac{1}{N}\sum_{n = 1}^N  P_p(n)
\leq \frac{L}{N} \E_p |\cP_0| \preceq \frac{1}{(p_c-p)^{1/2} N}
\end{equation}
for every
$N\geq 1$. It follows that there exists a constant $C$ such that
there exists $n_p$ with $1\leq n_{p}
\leq C(p_c-p)^{-1/2}$ such that $P_p(n_p) \leq p^{L-1} /2$.
 It follows inductively by the submultiplicativity estimate \eqref{eq:submult_simplified} that $P_p(kn_p) \leq p^{L-1} 2^{-k}$ for every $k\geq 1$. Since we also have that $P_p(n) \leq P_{p_c}(n) \preceq 1$,
 it follows by another application of \eqref{eq:submult_simplified} that
$P_p(kn_p+r) \preceq 2^{-k}$ for every
$k\geq 1$ and $0 \leq r < n_p$.
If now we write arbitrary $n$ as $n=\lfloor \frac{n}{n_p} \rfloor  n_p +r $,
then we see that the above gives $P_p(n) \preceq  2^{-\lfloor n/n_p \rfloor}$
and the desired exponential estimate follows from the upper bound $n_p \preceq (p_c-p)^{-1/2}$.
\end{proof}

To begin the proof of \cref{prop:subcritical_pioneers} we first note that
 \cref{prop:totalpioneersexpectation}, together with the
 result of Kozma and Nachmias \cite{KN11} that Theorem~\ref{thm:main1arm} holds
 for $p=p_c$, yield
 the following important corollary describing the distribution of the total number of pioneers \emph{at criticality}.

\begin{lemma}
\label{lem:criticalpioneers}
Let $d>6$ and suppose that \eqref{eq:twopointassumption} holds.
Then $\P_{p_c}(|\mathcal{P}_x| \geq k) \preceq k^{-2/3}$ for every $x\in \Z^d$ and $k\geq 1$.
\end{lemma}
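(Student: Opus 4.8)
The plan is to combine the uniform bound $P_{p_c}(n)\asymp 1$ supplied by \cref{prop:totalpioneersexpectation} with the critical one-arm estimate of Kozma and Nachmias (the $p=p_c$ case of \cref{thm:main1arm}), splitting according to the extrinsic radius of the cluster of $x$. By translation-invariance we may assume $x=0$ throughout.

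First I would observe that every $0$-pioneer has \emph{both} endpoints in the cluster $K_0$ of the origin: if $\{y,z\}\in\cP_0(n)$ then $\{y,z\}$ is open and $0$ is connected to $y$ by an open path, so $y,z\in K_0$; moreover $z_1\geq n$, so $\cP_0(n)=\emptyset$ whenever $K_0\subseteq\Lambda_{n-1}$. Writing $R=\max\{\|z\|_\infty : z\in K_0\}$ for the extrinsic radius of $K_0$, it follows that $|\cP_0|\leq\sum_{n\geq 1}|\cP_0(n)|=\sum_{n=1}^{R}|\cP_0(n)|$, and hence that for every integer $m\geq 1$ we have the inclusion of events
\[
\{|\cP_0|\geq k\}\ \subseteq\ \{R\geq m+1\}\ \cup\ \Bigl\{\textstyle\sum_{n=1}^{m}|\cP_0(n)|\geq k\Bigr\}.
\]
On the first event, $\{R\geq m+1\}\subseteq\{0\leftrightarrow\partial\Lambda_m\}$, so by the main result of \cite{KN11} we get $\P_{p_c}(R\geq m+1)\preceq m^{-2}$. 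On the second event, Markov's inequality together with $\E_{p_c}|\cP_0(n)|=P_{p_c}(n)\preceq 1$ from \cref{prop:totalpioneersexpectation} yields
\[
\P_{p_c}\Bigl(\textstyle\sum_{n=1}^{m}|\cP_0(n)|\geq k\Bigr)\ \leq\ \frac{1}{k}\sum_{n=1}^{m}P_{p_c}(n)\ \preceq\ \frac{m}{k}.
\]
Adding these two bounds and optimising by taking $m=\lceil k^{1/3}\rceil$ gives $\P_{p_c}(|\cP_0|\geq k)\preceq m^{-2}+m/k\preceq k^{-2/3}$ for every $k\geq 1$, which is the claim.

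There is no real obstacle internal to this lemma: the only point requiring a little care is the (routine) observation that pioneers for a far hyperplane force the cluster of the origin to have large extrinsic radius, after which everything reduces to two facts that we are entitled to assume, namely \cref{prop:totalpioneersexpectation} and the Kozma--Nachmias one-arm bound. The genuine difficulty sits upstream, in the proof of \cref{prop:totalpioneersexpectation}.
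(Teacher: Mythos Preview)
Your proof is correct and follows essentially the same approach as the paper: both split according to whether the cluster of the origin reaches far (the paper uses the halfspace event $\{0\leftrightarrow H_n\}$ while you use the extrinsic radius $\{R\geq m+1\}$), bound the ``far'' event by the Kozma--Nachmias one-arm estimate $\preceq m^{-2}$, bound the ``near'' event via Markov's inequality and \cref{prop:totalpioneersexpectation} by $\preceq m/k$, and then optimise with $m=\lceil k^{1/3}\rceil$. The only difference is cosmetic.
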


\begin{proof}
It suffices to consider the case $x=0$.
Let $n,k \geq 1$, where $n$ is a parameter we will optimise over shortly.
By Markov's inequality,
\begin{align}\P_{p_c}(|\mathcal{P}_0| \geq k) &\leq \P_{p_c}(\text{$|\mathcal{P}_0| \geq k$ and $0$ is not connected to $H_n$} ) + \P_{p_c}(0 \text{ is connected to } H_n )
\\
& \leq \frac{1}{k} \E_{p_c} \left[\sum_{i=0}^n |\cP_0(i)| \right] + \P_{p_c}(0 \text{ is connected to } H_n )\preceq \frac{n}{k} +  \frac{1}{n^2},\end{align}
 for every $k,n \geq 1$, where the  first bound follows from \cref{prop:totalpioneersexpectation} and the second follows from
 the aforementioned result of Kozma and Nachmias. The claim follows by taking $n=\lceil k^{1/3} \rceil$.
 \end{proof}

We now apply the OSSS inequality to deduce \cref{prop:subcritical_pioneers} from \cref{lem:criticalpioneers}. We follow closely the proof of \cite[Theorem 1.1]{Hutc20}.
For each $p\in [0,1]$ and $h\geq 0$, we write $\P_{p,h}$ for the law of the pair $(\omega,\cG)$ where $\omega$ is distributed as Bernoulli-$p$ bond percolation and $\cG$ is a \emph{ghost field} independent of $\omega$, that is, a random subset of the edge set $\B$ in which each edge is included independently at random with inclusion probability $1-e^{-h}$. We call an edge \emph{green} if it belongs to $\cG$. (While it is more standard to consider ghost fields to be random sets of \emph{vertices}, it is more convenient for our purposes to take them to be random sets of edges.)  As a first and key step, we use
the OSSS inequality to prove a differential inequality.

\begin{lemma}
\label{lem:differentialinequality}
The differential inequality
\begin{equation}
\label{eq:pioneersdiffineq}
\lrDini{p} \log \P_p(|\cP_0| \geq k) \geq  \frac{1}{2p(1-p)}
\left[\frac{k(1-e^{-1})}{\sum_{i=0}^{k}
\P_{p}
(|\cP_0| \geq i)}-1\right]
\end{equation}
holds for every $k\geq 1$ and $0<p<1$.
\end{lemma}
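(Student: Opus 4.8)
The plan is to apply the OSSS inequality for decision forests (\cref{cor:OSSS_forest}) to the increasing event $A_k := \{|\cP_0|\ge k\}$ together with a ghost field, following the method developed for cluster volumes in \cite{Hutc20}. Fix $0<p<1$ and $k\ge 1$, work under $\P_{p,h}$ with $h=1/k$, and write $M_k := \sum_{i=0}^k\P_p(|\cP_0|\ge i) = 1+\E_p[|\cP_0|\wedge k]$. First I would record that $A_k$ is an increasing event: each indicator $\1(\{y,z\}\in\cP_0)$ is increasing in $\omega$, hence so is $|\cP_0|$. I would then take $f=\1(|\cP_0|\ge k)$, which depends on $\omega$ only, and $g = \1(\cP_0\cap\cG\neq\emptyset)$ (``some pioneer of $0$ is green''), which is increasing in $(\omega,\cG)$. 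Conditioning on $\omega$ gives $\P_{p,h}(g=1\mid\omega) = 1-e^{-|\cP_0|/k}$, so $\P_{p,h}(fg=1)\ge (1-e^{-1})\P_p(A_k)$ (using $|\cP_0|\ge k$ on $A_k$), while $1-e^{-x/k}\le\tfrac1k(x\wedge k)$ gives $\P_{p,h}(g=1) = \E_p[1-e^{-|\cP_0|/k}] \le \tfrac1k M_k$. Since $f,g\in\{0,1\}$, \eqref{eq:CoVr_Cov} then yields $\CoVr_{p,h}[f,g] = 2\Cov_{p,h}[f,g] \ge 2\P_p(A_k)\bigl[(1-e^{-1}) - \tfrac1k M_k\bigr]$.

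The heart of the proof is to exhibit a decision forest $F$ computing $g$ under $\P_{p,h}$ with revealments $\delta_e(F,\P_{p,h})\le \tfrac2k M_k$ for every $\omega$-edge $e$. The natural candidate is the forest $\{T^f:f\in\B\}$ in which $T^f$ first queries whether $f$ is green and halts at once if not, and if $f=\{y,z\}$ is green with the orientation of a possible pioneer (say $y_1<z_1$) then queries $\omega(f)$ and, when $f$ is open, runs an exploration of the cluster of $y$ inside $\{w:w_1<z_1\}$ to decide whether $0$ is connected to $y$ there, i.e.\ whether $f\in\cP_0$; run in parallel over all $f$, this computes $g$. For the revealment bound one checks that the event ``some $T^f$ queries $e$'' forces $f$ to be green and open and forces a half-space connection of the type counted by $\bar P$ between a neighbourhood of $e$ and the level of $f$; summing the resulting probabilities over $f$ using reflection symmetry, the Chatterjee--Hanson bounds \eqref{eq:CH_oneboundary}--\eqref{eq:CH_bothboundary}, and the estimates $\bar P(n)\preceq\log(n+2)$ (\cref{lem:pioneers_expectation_log}) and $P_{p_c}(n)\asymp1$ (\cref{prop:totalpioneersexpectation}), one aims at a bound $\delta_e(F,\P_{p,h})\preceq \tfrac1k\sum_{i=0}^k\P_p(|\cP_0|\ge i)$, which with the right constants is $\le\tfrac2k M_k$. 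The hard part will be implementing this correctly: the un-truncated exploration just described in fact has \emph{divergent} revealment at $p=p_c$ because $\sum_n\bar P(n)=\infty$ there, so the exploration must be truncated after ``$k$ layers'' while still computing $g$ correctly, which forces one to treat separately the pioneers at levels exceeding $k$ --- this is where the layered structure $\cP_0=\bigcup_n\cP_0(n)$ enters in an essential way, and getting the truncation right, in the spirit of \cite{Hutc20}, is the main obstacle.

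Granting the revealment bound, the rest is assembly. The ground set of \cref{cor:OSSS_forest} carries one copy of $\B$ for $\omega$ and one for $\cG$; since $f$ does not depend on $\cG$, only the $\omega$-edges contribute to the left-hand side of \eqref{eq:OSSS}, and there $\Cov_{p,h}[f,\omega(e)] = \Cov_p[\omega(e),\1(A_k)]\ge 0$ because $A_k$ is increasing. Hence, using \eqref{e:Dinilb},
\[
\sum_{e}\delta_e(F,\P_{p,h})\,\Cov_{p,h}[f,\omega(e)] \;\le\; \tfrac2k M_k\sum_{e\in\B}\Cov_p[\omega(e),\1(A_k)] \;\le\; \tfrac2k M_k\,p(1-p)\,\lrDini{p}\P_p(A_k).
\]
Combining with $\tfrac12\CoVr_{p,h}[f,g]\ge \P_p(A_k)\bigl[(1-e^{-1})-\tfrac1k M_k\bigr]$ from above,
\[
\tfrac2k M_k\,p(1-p)\,\lrDini{p}\P_p(A_k) \;\ge\; \P_p(A_k)\Bigl[(1-e^{-1}) - \tfrac1k M_k\Bigr].
\]
Since $\P_p(A_k)>0$ for every $0<p<1$ (the horizontal open path from $0$ to $ke_1$ alone produces $k$ distinct pioneers of $0$), I can divide by $\P_p(A_k)$ and by $\tfrac2k M_k\,p(1-p)$ and use \eqref{e:Dinilog} to conclude
\[
\lrDini{p}\log\P_p(|\cP_0|\ge k) \;\ge\; \frac{k(1-e^{-1}) - M_k}{2p(1-p)M_k} \;=\; \frac{1}{2p(1-p)}\left[\frac{k(1-e^{-1})}{\sum_{i=0}^{k}\P_p(|\cP_0|\ge i)} - 1\right],
\]
which is the claim; when the right-hand side is negative there is nothing to prove, since $\P_p(A_k)$ is nondecreasing in $p$ and so $\lrDini{p}\log\P_p(A_k)\ge0$.
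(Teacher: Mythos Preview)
Your setup, choice of $f$ and $g$, decision forest, lower bound on $\CoVr[f,g]$, and final assembly all match the paper's proof. The gap is in the revealment bound, which you flag as the ``main obstacle'' and propose to attack via Chatterjee--Hanson estimates, the bound $\bar P(n)\preceq \log(n+2)$, and a truncation of the exploration to $k$ layers. None of this is needed, and indeed \cref{lem:differentialinequality} is stated (and proved in the paper) for \emph{all} $d$ and all $0<p<1$, with no high-dimensional input whatsoever; the route you sketch would at best yield a lemma restricted to dimensions where \eqref{eq:twopointassumption} holds.

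The point you are missing is that one must \emph{not} bound $\delta_e$ by a union bound over the trees $T^f$. Instead, observe that if $T^f$ (with $f=\{y,z\}$, $y_1<z_1$, $f\neq e$) ever queries $\omega(e)$, then $f$ is green, $f$ is open, and some endpoint $a$ of $e$ lies in the cluster of $y$ inside $\{w:w_1<z_1\}$; by the very definition of pioneer this says $f\in\cP_a$. Hence the \emph{union} over all $f$ of the events $\{T^f$ reveals $\omega(e)\}$ is contained in the single event $\{e\in\cG\}\cup\{\cP_a\cap\cG\neq\emptyset\}\cup\{\cP_b\cap\cG\neq\emptyset\}$, where $a,b$ are the endpoints of $e$. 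By translation invariance and conditioning on $\omega$,
\[
\delta_e(F,\P_{p,h}) \;\le\; (1-e^{-h}) + 2\,\E_p\bigl[1-e^{-h|\cP_0|}\bigr],
\]
which with $h=1/k$ and $1-e^{-x/k}\le \tfrac1k(x\wedge k)$ gives exactly your target $\delta_e\le \tfrac2k M_k$. No divergence arises because you never sum probabilities over $f$; the ghost field does the ``truncation'' automatically through the factor $1-e^{-h|\cP_0|}$, which is bounded by $1$ even when $|\cP_0|=\infty$. With this one-line fix your proof is complete and coincides with the paper's.
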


\begin{proof}
By \eqref{e:Dinilb} and \eqref{e:Dinilog},
\begin{equation}
    \lrDini{p}\log \P_p(|\cP_0| \geq k)
    \geq
    \frac{1}{\P_p(|\cP_0| \geq k) }
    \frac{1}{p(1-p)} \sum_{e\in \B} \Cov\left[\omega(e), \1(|\cP_0| \geq k) \right]
\end{equation}
where $\B$ is the set of edges.
It therefore suffices to prove that
\begin{equation}
\label{e:di-goal}
    \sum_{e\in \B} \Cov\left[\omega(e), \1(|\cP_0| \geq k) \right]
    \ge
    \frac{1}{2}
    \left[\frac{k(1-e^{-1})}{\sum_{i=0}^{k} \P_{p}(|\cP_0| \geq i)}-1\right]
    \P_p(|\cP_0| \geq k)
    .
\end{equation}
For this we will use Theorem~\ref{cor:OSSS_forest}.

For the setup for Theorem~\ref{cor:OSSS_forest}, we let
$(\omega,\cG)$ have law $\P_{p,h}$, where we think of $\P_{p,h}$ as a product measure on $\{0,1\}^{\B \times \{\mathrm{perc},\mathrm{ghost}\}}$. Consider the two Boolean functions
\begin{equation}f(\omega,\cG)=f(\omega) = \mathbbm{1}(|\cP_0| \geq k)
\qquad\text{and}\qquad
g(\omega,\cG) = \mathbbm{1}(\cP_0 \cap \cG \neq \emptyset).
\end{equation}
We say that an edge is \emph{horizontal} if its endpoints have distinct first coordinates.
We can determine the value of $g$ by first revealing the value of the ghost field at each horizontal edge and then exploring the cluster of each green horizontal edge in the halfspace lying strictly to the left of its rightmost endpoint. This exploration process can be encoded as a decision forest $F=\{T^e : e\in \B\}$ in which the decision tree $T^e$ first queries the status of the ghost field at the edge $e$, halting if it discovers that $\cG(e)=0$. If the decision tree discovers that $\cG(e)=1$, it next checks whether $e$ is open in $\omega$, halting if it is closed and otherwise exploring the cluster of the leftmost endpoint of $e$ in the halfspace lying strictly to the left of the rightmost endpoint of $e$. See the proof of \cite[Proposition 3.1]{Hutc20} to see how such a decision forest may be defined formally.
 This decision forest clearly computes $g$.

 Its revealments satisfy
\begin{align}
\delta_{e,\mathrm{perc}}(F,\P_{p,h}) &\leq \P_{p,h}(\text{$e\in \cG$ or at least one of the endpoints of $e$ has a pioneer in $\cG$})
\end{align}
\begin{align}
\delta_{e,\mathrm{ghost}}(F,\P_{p,h}) &= 1
\end{align}
for each $e\in \B$.
We can bound the revealment probabilities of edges by the union bound
\begin{align}
\delta_{e,\mathrm{perc}}(F,\P_{p,h}) &\leq \P_{p,h}(e\in \cG) + 2 \P_{p,h}(\text{$0$ has a pioneer in $\cG$})
\nnb & = 1-e^{-h}+ 2 \E_{p,h}\left[1-e^{-h |\cP_0|}\right].
\end{align}
It therefore follows from the OSSS inequality \cref{cor:OSSS_forest} that
\begin{align}
\Cov \left[f,g\right] &= \frac{1}{2}|\CoVr[f,g]|
\nnb
&\leq \sum_{e\in \B} \delta_{e,\mathrm{perc}}(F,\P_{p,h}) \Cov\left[f,\omega(e)\right] + \sum_{e\in \B} \delta_{e,\mathrm{ghost}}(F,\P_{p,h}) \Cov\left[f,\cG(e)\right]
\nonumber
\\
&=
\sum_{e\in \B} \delta_{e,\mathrm{perc}}(F,\P_{p,h}) \Cov\left[f,\omega(e)\right] \nnb
&\leq \left(1-e^{-h} + 2 \E_{p,h}\left[1-e^{-h |\cP_0|}\right]\right) \sum_{e\in \B} \Cov\left[f,\omega(e)\right],
\end{align}
where we used that $f(\omega,\cG)=f(\omega)$ is independent of the ghost field $\cG$ in the equality on the second line.
On the other hand, we can also compute that
\begin{align}
\Cov[f,g]
 &= \P_{p,h} \left(|\cP_0| \geq k, \; |\cP_0 \cap \cG| \geq 1 \right)  -  \P_p \left(|\cP_0| \geq k \right) \P_{p,h}\left(|\cP_0 \cap \cG| \geq 1\right)\nonumber
\\
&= \E_{p} \left[\left(1-e^{-h|\cP_0|}\right)\mathbbm{1}(|\cP_0| \geq k)\right]  - \E_{p} \left[1-e^{-h |\cP_0|}\right] \P_p\left(|\cP_0| \geq k\right)\nonumber\\
 &\geq (1-e^{-hk}) \P_p\left(|\cP_0| \geq k\right) - \E_p \left[1-e^{-h|\cP_0|}\right]
\P_p\left(|\cP_0| \geq k\right),
\end{align}
so that
\begin{equation}
\sum_{e\in \B}\Cov\left[f,\omega(e)\right] \geq \frac{(1-e^{-hk}) - \E_p \left[1-e^{-h|\cP_0|}\right] }{1-e^{-h} + 2 \E_p\left[1-e^{-h|\cP_0|}\right]}\P_p\left(|\cP_0| \geq k\right)
\end{equation}
for every $k\geq 1$, $0 \leq p \leq 1$ and $h \geq 0$.
The claimed
inequality \eqref{e:di-goal}
follows by taking $h=1/k$
and using the elementary fact that
\begin{equation}1-e^{-1/k} + \E_{p}\left[1-e^{-|\cP_0|/k}\right]
\le
\frac{1}{k}+\frac{1}{k}\E_{p}\left[\min\{k,|\cP_0|\}\right]=\frac{1}{k} \sum_{i=0}^k \P_p(|\cP_0| \geq i).
\end{equation}
This completes the proof.
\end{proof}

\begin{proof}[Proof of \cref{prop:subcritical_pioneers}]
We now analyse the differential inequality \eqref{eq:pioneersdiffineq} to prove the desired slightly subcritical bounds.
We begin with the proof of the inequality $\E_{p}
|\cP_0| \preceq  (p_c-p)^{-1/2}$.
Since $\P_p(|\cP_0| \geq k)$ is an increasing
function of $p$, we have by \cref{lem:differentialinequality} and
\cref{lem:criticalpioneers} that there exist positive constants $c_1$ and $C_1$ such that
\begin{equation}
\lrDini{p} \log \P_p(|\cP_0| \geq k) \geq  \frac{1}{2p(1-p)}
\left[\frac{k(1-e^{-1})}{C_1\sum_{i=0}^{k} (i+1)^{-2/3}}-1\right]
\geq  \frac{1}{2p(1-p)}
\left[c_1 k^{2/3} -1\right]
\end{equation}
for every $0<p \leq p_c$ and $k\geq 1$.
Integration of this inequality over the interval $[p,p_c]$, together with
\eqref{e:DiniFTC}, shows
that there exist positive constants $c_2$ and $C_2$ such that
\begin{align}
\P_p(|\cP_0| \geq k) &\leq \P_{p_c}(|\cP_0| \geq k) \exp\left(-\int_p^{p_c} \frac{1}{2q(1-q)} \left[c_1 k^{2/3} -1\right] d q \right)
\nonumber
\\
&\leq C_2 k^{-2/3} \exp\left(-c_2 (p_c-p) k^{2/3}\right)
\end{align}
for every $p_c/2 \leq p \leq p_c$ and $k \geq 1$. It follows by calculus that there exists a positive constant $C_3$ such that
\begin{equation}
\E_p |\cP_0| \leq  C_2 \sum_{k=1}^\infty k^{-2/3} e^{-c_2 (p_c-p) k^{2/3}} \leq C_3 (p_c-p)^{-1/2}
\end{equation}
for every $p_c/2\leq p < p_c$ as claimed.

Finally, we prove that $\P_{p}(
|\cP_0| \geq k) \preceq  k^{-2/3} \exp [ -c (p_c-p)^{3/2} k ]$.
The case of $p=p_c$ has been proved already in Lemma~\ref{lem:criticalpioneers},
so we can restrict attention here to $p_c/2\le p < p_c$.
The differential inequality \eqref{eq:pioneersdiffineq} implies the simplified inequality
\begin{equation}
\label{eq:pioneersdiffineq2}
\lrDini{p} \log \P_p(|\cP_0| \geq k) \geq  \frac{1}{2p(1-p)}
\left[\frac{k(1-e^{-1})}{1+\E_p|\cP_0|}-1\right].
\end{equation}
We again integrate the above inequality, and
conclude that there exist positive constants $c_3$, $c_4$ and $C_3$ such that if $0<\eps \leq p_c/4$ then
\begin{align}
\P_{p_c-2\eps}(|\cP_0| \geq k) &\leq \P_{p_c-\eps}(|\cP_0| \geq k) \exp\left( -\int_{p_c-2\eps}^{p_c-\eps} \frac{1}{2q(1-q)}
\left[c_3 \eps^{1/2} k-1\right] d q\right)
\nonumber
\\
&\leq C_3 k^{-2/3} \exp\left(-c_4\eps^{3/2}k\right)
\end{align}
for every $k\geq 1$. This completes the proof.
\end{proof}

\section{
Plateau below the window:  Proof of Theorem~\ref{thm:plateau}
}
\label{sec:below_window}

In this section, we apply our bound on the slightly subcritical $\Z^d$ two-point function from \cref{thm:2pt} to prove the plateau estimates for the torus two-point function
below the scaling window in \cref{thm:plateau}.
As a corollary, we also prove the torus triangle condition, \cref{thm:tricon}.

\subsection{Preliminaries}

We start by recording some preliminary estimates that we will use repeatedly in the rest of the section.
For each $x \in \Z^d$ and $0\leq p \leq 1$,
the $\Z^d$ \emph{open bubble} and \emph{open triangle diagrams} $\bubble_p(x)$
and $\triangle_p(x)$ are defined by
\begin{align}
    \bubble_p(x) &= \sum\limits_{u\in \Z^d}
		\tau_p(u) \tau_p(x-u)
    =
    (\tau_p*\tau_p)(x),
\\
	\triangle_p(x) &= \sum\limits_{u,v \in \Z^d}
		\tau_p(u) \tau_p(v-u)\tau_p(x-v)
    =
    (\tau_p*\tau_p*\tau_p)(x).	
\end{align}
Upper bounds on these two quantities are given in the next lemma.
The notation $p_c$ always refers to the critical value for $\Z^d$.

\begin{lemma}
\label{lem:tri_bubble_bounds}
Let $d>6$ and suppose that \eqref{eq:twopointassumption} holds on $\Z^d$.
There exist positive constants $C_1,C_2$ such that
\begin{align}
	\bubble_p(x)
	&\leq \frac{C_1}{\langle x \rangle^{d-4}}e^{-c_1m(p)\|x\|_\infty} , \\
	\triangle_p(x)
	&\leq \frac{C_2}{\langle x \rangle^{d-6}}e^{-c_1m(p)\|x\|_\infty}  ,
\label{e:Tbd}
\end{align}
for every $x \in \Z^d$ and every $p \le p_c$.
\end{lemma}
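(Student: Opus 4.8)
The plan is to reduce both inequalities to the single pointwise estimate
\begin{equation}
\tau_p(x)\ \le\ \frac{C}{\langle x\rangle^{d-2}}\,e^{-c\,m(p)\,\|x\|_\infty}\qquad (p\le p_c,\ x\in\Z^d),\tag{$\ast$}
\end{equation}
and then to read $\bubble_p$ and $\triangle_p$ off from $(\ast)$ via the elementary convolution bounds \eqref{e:ab_convolution}--\eqref{e:abc_convolution}. The only novelty over \cref{thm:2pt} is that $(\ast)$ carries the mass $m(p)$ in the exponent rather than $(p_c-p)^{1/2}$; interchanging the two is the one step that needs any care, because the equivalence $m(p)\asymp(p_c-p)^{1/2}$ supplied by \eqref{e:masy} holds only as $p\uparrow p_c$.

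To prove $(\ast)$ I would split the range of $p$. By \eqref{e:masy} there exist $\eta>0$ and $A<\infty$ with $m(p)\le A(p_c-p)^{1/2}$ for $p\in[p_c-\eta,p_c]$. Let $c'$ be the constant of \cref{thm:2pt} and fix $c>0$ with $c\le\min\{\tfrac12,\,c'/A\}$. For $p\in[p_c-\eta,p_c]$ the estimate $(\ast)$ then follows directly from \cref{thm:2pt}, since $\langle x\rangle\ge\|x\|_\infty$ and $c\,m(p)\le c'(p_c-p)^{1/2}$. For $0<p\le p_c-\eta$ we have, by monotonicity of $\tau_p$ in $p$ and \cref{thm:2pt}, that $\tau_p(x)\le\tau_{p_c-\eta}(x)\le C\langle x\rangle^{2-d}e^{-c'\eta^{1/2}\|x\|_\infty}$, and also $\tau_p(x)\le e^{-m(p)\|x\|_\infty}$ with $m(p)\in(0,\infty)$ by \eqref{eq:massbds}. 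If $c\,m(p)\le c'\eta^{1/2}$ the first bound already gives $(\ast)$; if instead $c\,m(p)>c'\eta^{1/2}$, then from $e^{-m(p)\|x\|_\infty}=e^{-c\,m(p)\|x\|_\infty}e^{-(1-c)m(p)\|x\|_\infty}$ and $e^{-(1-c)m(p)\|x\|_\infty}\le e^{-(1-c)(c'\eta^{1/2}/c)\|x\|_\infty}\preceq\langle x\rangle^{2-d}$ (a fixed positive exponential rate dominates any polynomial) we again obtain $(\ast)$. The case $p=0$ is trivial. This little case analysis for $p$ bounded away from $p_c$ is the main obstacle, and it is a mild one; the remainder is routine.

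Given $(\ast)$, the triangle inequality $\|x\|_\infty\le\|u\|_\infty+\|x-u\|_\infty$ collapses the two exponential factors and gives
\begin{equation}
\bubble_p(x)=\sum_{u\in\Z^d}\tau_p(u)\tau_p(x-u)\ \preceq\ e^{-c\,m(p)\|x\|_\infty}\sum_{u\in\Z^d}\langle u\rangle^{2-d}\langle x-u\rangle^{2-d}\ \preceq\ \frac{e^{-c\,m(p)\|x\|_\infty}}{\langle x\rangle^{d-4}},
\end{equation}
the last step being \eqref{e:ab_convolution} with exponents $a=b=2$ (legitimate since $d>4$). Since $\triangle_p=\tau_p*\bubble_p$, inserting $(\ast)$ for the $\tau_p$ factor and the bound just proved for the $\bubble_p$ factor, again collapsing the exponentials by the triangle inequality, and applying \eqref{e:ab_convolution} with exponents $a=2$ and $b=4$ (legitimate since $d>6$) yields $\triangle_p(x)\preceq\langle x\rangle^{6-d}e^{-c\,m(p)\|x\|_\infty}$; equivalently one may apply \eqref{e:abc_convolution} with all three exponents equal to $2$ directly to $\tau_p*\tau_p*\tau_p$. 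Taking $c_1=c$ and renaming the implied multiplicative constants as $C_1$ and $C_2$ completes the argument.
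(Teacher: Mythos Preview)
Your proof is correct and follows essentially the same approach as the paper: insert \cref{thm:2pt} into the convolutions, pull out the exponential via the triangle inequality, and finish with the polynomial convolution estimates \eqref{e:ab_convolution}--\eqref{e:abc_convolution}. The paper's proof is a three-line sketch that does exactly this.

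The one genuine difference is that you are more careful than the paper about the passage from $(p_c-p)^{1/2}$ in the exponent (as in \cref{thm:2pt}) to $m(p)$ (as in the lemma statement). The paper simply asserts this, but as you correctly observe, the equivalence $m(p)\asymp(p_c-p)^{1/2}$ from \eqref{e:masy} is only asymptotic as $p\uparrow p_c$, and in fact $m(p)/(p_c-p)^{1/2}\to\infty$ as $p\to 0$. Your case analysis for $p$ bounded away from $p_c$---combining the monotone bound $\tau_p\le\tau_{p_c-\eta}$ with the pure-exponential bound \eqref{eq:massbds} according to whether $cm(p)$ is small or large---cleanly fills what is, strictly speaking, a small gap in the paper's argument if one wants the lemma to hold for \emph{all} $p\le p_c$ as stated.
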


\begin{proof}
We insert the bound of \cref{thm:2pt} into the convolutions defining the bubble and
triangle diagrams.  By the triangle inequality, the exponential factors are bounded above
by an overall factor $e^{-c_1m(p)|x|}$.  For the powers, let
$f(x) = \langle x\rangle^{-(d-2)}$.  Since $d>6$ we have by
\eqref{e:ab_convolution}--\eqref{e:abc_convolution}  that
$(f*f)(x) \preceq \langle x \rangle^{-(d-4)}$ and $(f*f*f)(x) \preceq \langle x \rangle^{-(d-6)}$.
Together, this gives the desired result.
\end{proof}

Observe that
if $x \in \T_r^d$ is regarded as a point in $[-\frac r2, \frac r2)^d \cap \Z^d$
then $\langle x+ru \rangle \asymp r\langle u\rangle$ uniformly in nonzero $u\in \Z^d$ since
\begin{align}
\label{e:xulb}
    \|x+ru\|_\infty \ge \|ru\|_\infty - \frac r2 &\ge  \|ru\|_\infty - \frac 12\|ru\|_\infty
    = \frac 12 \|ru\|_\infty
\end{align}
and
\begin{align}
\label{e:xuub}
    \|x+ru\|_\infty \le  \frac r2 + \|ru\|_\infty  &\le  \frac 12\|ru\|_\infty + \|ru\|_\infty
    = \frac 32 \|ru\|_\infty.
\end{align}
The following elementary lemma will be useful with $\nu = cm(p)$.

\begin{lemma}
\label{lem:unifmassint}
Let $r \ge 2$, $a > 0 $ and $\nu >0$.
Then
\begin{align}
	\sum\limits_{u \in\Z^d : u \neq 0}
    \frac{1}{\|x + r u\|_\infty^{d-a}}e^{-\nu \|x+ru\|_\infty}
	& \preceq_a
     \frac{1}{\nu^{a}r^{d}} e^{-\frac 14 \nu r}
\end{align}
for every $x\in \T_r^d  \equiv [-\frac r2, \frac r2)^d \cap \Z^d$.
\end{lemma}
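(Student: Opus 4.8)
The plan is to reduce the lattice sum over $u\in\Z^d\setminus\{0\}$ to a one-dimensional geometric-type sum and then recognise it as a standard Gamma tail sum. First I would invoke the two-sided comparison established in \eqref{e:xulb}--\eqref{e:xuub}, namely $\tfrac r2\|u\|_\infty \le \|x+ru\|_\infty \le \tfrac{3r}2\|u\|_\infty$ for every nonzero $u\in\Z^d$ and every $x\in[-\tfrac r2,\tfrac r2)^d\cap\Z^d$; since this is a two-sided bound on a positive ratio it gives $\|x+ru\|_\infty^{-(d-a)}\asymp_a (r\|u\|_\infty)^{-(d-a)}$ irrespective of the sign of $d-a$. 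For the exponential weight I would use only the lower bound $\|x+ru\|_\infty\ge\tfrac r2\|u\|_\infty$ together with $\|u\|_\infty\ge 1$: splitting $\tfrac{\nu r}2\|u\|_\infty=\tfrac{\nu r}4\|u\|_\infty+\tfrac{\nu r}4\|u\|_\infty\ge\tfrac{\nu r}4+\tfrac{\nu r}4\|u\|_\infty$ yields $e^{-\nu\|x+ru\|_\infty}\le e^{-\nu r/4}\,e^{-\frac{\nu r}{4}\|u\|_\infty}$, which already peels off the claimed global exponential prefactor.

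Combining these two bounds, the left-hand side is at most an $a$-dependent constant times
$$r^{-(d-a)}\,e^{-\nu r/4}\sum_{u\in\Z^d:\,u\neq 0}\frac{1}{\|u\|_\infty^{d-a}}\,e^{-\frac{\nu r}{4}\|u\|_\infty}.$$
Grouping the remaining sum by the value $k=\|u\|_\infty\ge 1$ and using the elementary surface bound $\#\{u\in\Z^d:\|u\|_\infty=k\}\preceq k^{d-1}$, it is bounded by a constant times $\sum_{k\ge1}k^{a-1}e^{-\lambda k}$ with $\lambda:=\nu r/4$.

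It then remains to prove the elementary estimate $\sum_{k\ge1}k^{a-1}e^{-\lambda k}\preceq_a\lambda^{-a}$, valid for every $\lambda>0$; substituting back $\lambda=\nu r/4$ gives $r^{-(d-a)}e^{-\nu r/4}\,(\nu r)^{-a}\asymp_a\nu^{-a}r^{-d}e^{-\nu r/4}$, which is the assertion. I would prove this estimate by splitting on $\lambda$: for $\lambda\le 1$, compare the sum to $\int_0^\infty t^{a-1}e^{-\lambda t}\,dt=\Gamma(a)\lambda^{-a}$ (directly when $0<a\le 1$, since the integrand is then decreasing, and with an additional peak-value term of order $\lambda^{-(a-1)}\le\lambda^{-a}$ when $a>1$); for $\lambda\ge 1$, use $e^{-\lambda k}\le e^{-\lambda}e^{-(k-1)}$ to get $\sum_{k\ge1}k^{a-1}e^{-\lambda k}\preceq_a e^{-\lambda}$, which is $\preceq_a\lambda^{-a}$ because $\sup_{\lambda\ge1}\lambda^a e^{-\lambda}<\infty$.

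The lemma is genuinely routine and I do not anticipate any real obstacle; the only point needing a little care is that the bound on $\sum_{k\ge1}k^{a-1}e^{-\lambda k}$ must be uniform over all $\lambda>0$ — in particular it must still read $\lambda^{-a}$ in the regime $\nu r\gg 1$ where it is the exponential, not the power, that controls the sum — which is exactly why the argument is split into the ranges $\nu r\lesssim 1$ and $\nu r\gtrsim 1$.
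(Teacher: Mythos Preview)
Your proof is correct and follows the same route as the paper's: compare $\|x+ru\|_\infty$ to $r\|u\|_\infty$, extract the factor $e^{-\nu r/4}$, group by shells $\|u\|_\infty=k$, and bound $\sum_{k\ge1}k^{a-1}e^{-\lambda k}$ by $\lambda^{-a}$. You are in fact slightly more careful than the paper on two minor points---using the two-sided comparison so the power bound is valid regardless of the sign of $d-a$ (the lemma allows $a>d$ though the applications never need it), and arguing the final one-dimensional estimate directly by splitting on $\lambda\lessgtr 1$ rather than via a sum-to-integral comparison $\int_1^\infty$ that is a touch loose when $\nu r$ is large---but the argument is otherwise identical.
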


\begin{proof}
Let $a>0$. It follows
from
\eqref{e:xulb} that for any nonzero $u \in \Z^d$ and $r\geq 2$,
$\veee{x + r u} \geq \frac12 \|ru\|_\infty$ and thus
\begin{align}
	\sum\limits_{u \neq 0} \frac{1}{\|x + r u\|_\infty^{d-a}}e^{-\nu \veee{x+ru}}
	&\leq
    \sum\limits_{u \neq 0} \frac{1}{  (\frac 12 \|ru\|_\infty)^{d-a}}
    e^{-\frac{1}{2} \nu \|ru\|_\infty}\nnb
	&\le 2^{d-a}e^{-\frac 14 \nu r}\sum\limits_{N = 1}^\infty \sum\limits_{u:\|u\|_\infty =N}
    \frac{1}{\|ru\|_\infty^{d-a}}e^{-\frac 14 \nu \|ru\|_\infty}
    \nnb
    &\preceq_a r^{a-d}e^{-\frac 14 \nu r}
    \sum\limits_{N = 1}^\infty N^{d-1 - d + a }e^{-\frac 14 \nu rN}
	.
\end{align}
We bound the sum on the right-hand side by an integral to obtain an upper bound
which is a constant multiple of
\begin{align}
	r^{a-d}e^{-\frac 14 \nu r}
    \int_1^\infty u^{a-1} e^{-\frac 14 \nu ru} \D u
    & =
    \frac{1}{\nu^a r^d} e^{-\frac 14 \nu r} \int_{\nu r}^\infty t^{a-1}e^{-t/4} \D t
    .
\end{align}
The integral is uniformly bounded since $a>0$.
This concludes the proof.
\end{proof}

\begin{rk}
\label{rk:mchi}
Bounds expressed in terms of the mass $m(p)$, such as the one in Lemma~\ref{lem:unifmassint} with $\nu  = cm(p)$,
can also be expressed in terms of the
susceptibility $\chi(p)$ since
\begin{equation}
\label{e:mchi}
    \frac{1}{m(p)^2}  \preceq  \chi(p).
\end{equation}
To prove \eqref{e:mchi}, we first
fix any $p_1 \in (0,p_c)$.  For $p \le p_1$, since $m$ is decreasing and
since $1=\chi(0) \le \chi(p)$, we have $m(p)^{-2} \le m(p_1)^{-2} \le
m(p_1)^{-2}\chi(p)$ and the desired upper bound
follows for $p \in (0,p_1]$.
We can choose $p_1$ close enough to $p_c$ that $m(p)^{-2}$ and $\chi(p)$ are
comparable for $p\in (p_1,p_c)$, since both are asymptotic to $(1-p/p_c)^{-1}$. In particular
for $p_1$ close enough to $p_c$ there exists $C$ such that $m(p)^{-2} \leq C \chi(p)$ for those $p\in[p_1,p_c)$.
\end{rk}

The following three estimates will be useful.

\begin{lemma}
\label{lem:integr_tri_bubble_bounds}
Let $d>6$ and suppose that \eqref{eq:twopointassumption} holds on $\Z^d$.
For $r \ge 2$, $x\in \T^d_r$
and $0 \le p < p_c$,
\begin{align}
\label{e:plateau-ub}
	\sum_{u\in \Z^d}\tau_p(x+ru) &\leq \tau_p(x) + C\frac{\chi(p)}{V}e^{-\frac{c}{4} m(p)r}, \\
	\sum_{u\in\Z^d} \bubble_p(x+ru) &\leq \bubble_p(x) + C\frac{\chi(p)^2}{V}e^{-\frac{c}{4} m(p)r},  \\
	\sum_{u\in\Z^d} \triangle_p(x+ru) &\leq \triangle_p(x) + C\frac{\chi(p)^3}{V}e^{-\frac{c}{4} m(p)r}.
\end{align}
\end{lemma}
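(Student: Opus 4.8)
The plan is to peel off the $u=0$ term from each of the three sums on the left---which produces exactly $\tau_p(x)$, $\bubble_p(x)$, or $\triangle_p(x)$, since by our convention $x$ is viewed as an element of $[-r/2,r/2)^d\cap\Z^d$---and to estimate the contribution of the nonzero $u$ by inserting the available pointwise upper bounds for $\tau_p$, $\bubble_p$, $\triangle_p$ and then summing with the periodic geometric-series estimate \cref{lem:unifmassint}. The observation that makes this clean is that for $u\neq 0$ and $r\geq 2$ we have $\|x+ru\|_\infty\geq\tfrac12\|ru\|_\infty\geq\tfrac r2\geq 1$ by \eqref{e:xulb}, so that $\langle x+ru\rangle=\|x+ru\|_\infty$ for every nonzero $u$ and \cref{lem:unifmassint} applies with no case distinction.

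For the triangle, \cref{lem:tri_bubble_bounds} gives $\triangle_p(x+ru)\preceq\|x+ru\|_\infty^{-(d-6)}e^{-c_1 m(p)\|x+ru\|_\infty}$ for every $u\neq 0$ and every $p\leq p_c$, so summing over $u\neq 0$ and applying \cref{lem:unifmassint} with $a=6$ and $\nu=c_1 m(p)$ bounds the sum by a constant times $m(p)^{-6}r^{-d}e^{-\frac{c_1}{4}m(p)r}$. Since $V=r^d$ and $m(p)^{-2}\preceq\chi(p)$ by \cref{rk:mchi} (hence $m(p)^{-6}\preceq\chi(p)^3$), this is at most $C\chi(p)^3V^{-1}e^{-\frac{c_1}{4}m(p)r}$, which together with the $u=0$ term gives the claimed triangle bound for all $0\le p<p_c$. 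The bubble bound is obtained identically, using the $\langle\cdot\rangle^{-(d-4)}$ bound of \cref{lem:tri_bubble_bounds}, \cref{lem:unifmassint} with $a=4$, and $m(p)^{-4}\preceq\chi(p)^2$.

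For the two-point function the only point requiring attention is that \cref{thm:2pt} is stated with $(p_c-p)^{1/2}$ rather than $m(p)$ in the exponent. When $p\in[p_c/2,p_c)$ this is harmless: $p\mapsto m(p)$ is non-increasing and positive on $[0,p_c)$ and $m(p)\sim A_m(p_c-p)^{1/2}$ as $p\uparrow p_c$ by \eqref{e:masy}, so $m(p)\asymp(p_c-p)^{1/2}$ throughout $[p_c/2,p_c)$; thus \cref{thm:2pt} yields $\tau_p(x+ru)\preceq\|x+ru\|_\infty^{-(d-2)}e^{-cm(p)\|x+ru\|_\infty}$ for $u\neq 0$, and \cref{lem:unifmassint} with $a=2$, $\nu=cm(p)$, together with $m(p)^{-2}\preceq\chi(p)$, gives $\sum_{u\neq 0}\tau_p(x+ru)\preceq\chi(p)V^{-1}e^{-\frac c4 m(p)r}$. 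When $p\in[0,p_c/2)$ I would instead use only the crude bound $\tau_p(y)\leq e^{-m(p)\|y\|_\infty}$ of \eqref{eq:massbds}: since $\|x+ru\|_\infty\geq\tfrac14 r+\tfrac14\|ru\|_\infty$ for $u\neq 0$,
\begin{equation}
  \sum_{u\neq 0}\tau_p(x+ru)\leq e^{-\frac14 m(p)r}\sum_{u\neq 0}e^{-\frac14 m(p)\|ru\|_\infty}\preceq e^{-\frac14 m(p)r}\,e^{-\frac14 m(p_c/2)r}\preceq\frac1V\,e^{-\frac14 m(p)r},
\end{equation}
where the middle step sums the series using $m(p)\geq m(p_c/2)>0$ and the last step uses that $r^{d}e^{-\frac14 m(p_c/2)r}$ is bounded for $r\geq 2$; since $\chi(p)\geq\tau_p(0)=1$, this is again of the required form. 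Relabelling the constants so that a single pair $c,C$ serves all three displays completes the proof.

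I do not anticipate a serious obstacle here: the substantive content is entirely in \cref{lem:tri_bubble_bounds}, \cref{lem:unifmassint}, and \cref{rk:mchi}, and the only bookkeeping that needs care is matching the power of $\chi(p)$ to the order of the diagram via $m(p)^{-2}\preceq\chi(p)$, and handling the two-point bound for $p$ bounded away from $p_c$, where $m(p)$ and $(p_c-p)^{1/2}$ are no longer comparable and a direct crude estimate must be used instead of \cref{thm:2pt}.
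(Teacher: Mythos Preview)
Your proposal is correct and follows essentially the same approach as the paper: separate the $u=0$ term, apply the pointwise bounds (\cref{thm:2pt} for $\tau_p$, \cref{lem:tri_bubble_bounds} for $\bubble_p$ and $\triangle_p$), sum via \cref{lem:unifmassint} with $a=2,4,6$, and convert $m(p)^{-a}$ to $\chi(p)^{a/2}$ via \cref{rk:mchi}.

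You are in fact more careful than the paper on one point. The paper tacitly writes the exponential factor from \cref{thm:2pt} as $e^{-c_1 m(p)\|x\|_\infty}$ (this already appears in the statement of \cref{lem:tri_bubble_bounds}), whereas \cref{thm:2pt} literally gives $e^{-c(p_c-p)^{1/2}\langle x\rangle}$. These are interchangeable only where $m(p)\asymp (p_c-p)^{1/2}$, i.e.\ near $p_c$; as $p\downarrow 0$ one has $m(p)\to\infty$ while $(p_c-p)^{1/2}$ stays bounded, so no uniform constant makes $c(p_c-p)^{1/2}\geq c_1 m(p)$ on all of $(0,p_c)$. Your explicit split into $p\in[p_c/2,p_c)$ (where \eqref{e:masy} gives comparability) and $p\in[0,p_c/2)$ (where you use the crude bound $\tau_p(y)\le e^{-m(p)\|y\|_\infty}$ from \eqref{eq:massbds} together with $m(p)\ge m(p_c/2)>0$) cleanly closes this gap for the two-point function. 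The paper's proof simply does not address this, so your version is the more complete one.
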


\begin{proof}
For the first inequality, we separate the $u = 0$ term from the sum
and apply \cref{thm:2pt}, as well as Lemma~\ref{lem:unifmassint}
with $a=2$, to obtain that there exist positive constants $c$, $C_1$, and $C_2$ such that
\begin{align}
	\sum_{u\in \Z^d}\tau_p(x+ru) &\leq  \tau_p(x) + \sum_{u \neq 0}
	\frac{C_1}{\langle x+ru\rangle^{d-2}}e^{-c m(p)\|x+ru\|_\infty} \\
	&\leq \tau_p(x) + C_2\frac{\chi(p)}{V}e^{-\frac{c}{4} m(p)r}.
\end{align}
For the bubble and triangle diagrams, in place of \cref{thm:2pt} we instead use
the bounds of Lemma~\ref{lem:tri_bubble_bounds}, which modify the power $d-2$ in the
above inequality to $d-4$ for the bubble and $d-6$ for the triangle.
We then apply Lemma~\ref{lem:unifmassint} and Remark~\ref{rk:mchi} with $a=4$ and with $a=6$ to complete
the proof.
\end{proof}

\subsection{Upper bound on the torus two-point function}

\begin{proof}[Proof of \eqref{e:plateau_upper}]
The proof is as in \cite{Slad20_wsaw}.
For each $0\leq p <p_c$ and $x\in \T^d_r \equiv [-\frac r2, \frac r2)^d \cap \Z^d$ we define
\begin{equation}
\label{e:psidef}
    \psi_{r,p}(x) = \sum_{u\in \Z^d: u \neq 0} \tau_p(x+ru),
\end{equation}
which is finite only when $p<p_c$. It follows by a simple coupling argument
originating in the work of
Benjamini and Schramm \cite{BS96} and further developed
in \cite[Proposition~2.1]{HHI07} that
\begin{equation}
\label{eq:2ptcoupling}
    \tau^{\T}_p(x) \le \tau_p(x) + \psi_{r,p}(x)
\end{equation}
for every $r>2$, $x\in \T^d_r$ and $0\leq p\leq 1$.
By \cref{lem:integr_tri_bubble_bounds},
\begin{align}
\label{e:plateau-ub2}
    \psi_{r,p}(x)
    &
	\leq C \frac{\chi(p)}{V}
    e^{-\frac{c}{4} m(p)r},
\end{align}
and with \eqref{eq:2ptcoupling} this immediately yields the upper bound \eqref{e:plateau_upper}.
\end{proof}

\subsection{Lower bound on the torus two-point function below the window}

We now turn to the proof of the lower bound \eqref{e:plateau_lower}
on the torus two-point function for $p$ below the scaling window.
This proof is model dependent and although it follows the general strategy
used for weakly self-avoiding walk in \cite{Slad20_wsaw}, it differs
in details.

We seek a lower bound of the form $r^{-d}\chi$ for the difference
\begin{equation}
    \psi_{r,p}^{\T}(x) =
    \tau^{\T}_{p}(x) - \tau_{p}(x)
    \qquad (x \in \T^d)
    .
\end{equation}
To this end we first make the decomposition
\begin{equation}
\label{e:psidecomp}
    \psi_{r,p}^{\T}(x) =  \psi_{r,p}(x) - (\psi_{r,p}(x) -\psi^{\T}_{r,p}(x) ).
\end{equation}
We can then deduce the lower bound \eqref{e:plateau_lower} as an immediate consequence of the following two lemmas.

\begin{lemma}
\label{lem:psigoal1}
Let $d>6$ and suppose that \eqref{eq:twopointassumption} holds on $\Z^d$.
There exist positive constants $A_2$ and $c_\psi$ such that if $r > 2$ and
$p_c-A_2r^{-2} \le p < p_c$
then
\begin{equation}
\label{e:psi}
    \psi_{r,p}(x) \ge c_\psi \frac{\chi(p)}{V}
\end{equation}
for every  $x\in \T^d_r$.
\end{lemma}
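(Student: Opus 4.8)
The plan is to prove \eqref{e:psi} directly for every $x$ by summing the contributions to $\psi_{r,p}(x)=\sum_{u\neq 0}\tau_p(x+ru)$ coming from a whole window of scales $1\le \|u\|_\infty\le M$ with $M\asymp (m(p)r)^{-1}$. The crucial input is a near-critical lower bound on the $\Z^d$ two-point function in the ``plateau regime'': there is a constant $\epsilon_0>0$ depending only on $d$ and $L$ such that $\tau_p(y)\ge \tfrac12 c_1\langle y\rangle^{-d+2}$ whenever $m(p)\langle y\rangle\le\epsilon_0$, where $c_1>0$ is the constant from the lower bound half of \eqref{eq:twopointassumption}. To obtain this, a standard computation with Russo's formula and the BK inequality yields the ``continuity'' estimate $\tau_{p_c}(y)-\tau_p(y)\preceq (p_c-p)\langle y\rangle^{-d+4}$ uniformly in $p\le p_c$ and $y\in\Z^d$: one bounds $\frac{d}{dp}\tau_p(y)$ by a near-neighbour sum of terms $\tau_p(a)\tau_p(y-b)\le\tau_{p_c}(a)\tau_{p_c}(y-b)$, applies the convolution bound \eqref{e:ab_convolution} (which converges since $d>6$), and integrates over $[p,p_c]$. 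Combining with the lower bound $\tau_{p_c}(y)\ge c_1\langle y\rangle^{-d+2}$ gives $\tau_p(y)\ge \langle y\rangle^{-d+2}\bigl(c_1-C(p_c-p)\langle y\rangle^2\bigr)$, and since $p_c-p\asymp m(p)^2$ as $p\uparrow p_c$ by \eqref{e:masy}, the bracket is at least $c_1/2$ once $m(p)\langle y\rangle$ lies below a suitable absolute constant $\epsilon_0$.

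With this in hand, fix $x\in\T_r^d\equiv[-\tfrac r2,\tfrac r2)^d\cap\Z^d$ and recall from \eqref{e:xulb}--\eqref{e:xuub} that $\tfrac12 r\|u\|_\infty\le\|x+ru\|_\infty\le\tfrac32 r\|u\|_\infty$ for every nonzero $u\in\Z^d$, uniformly in $x$. Put $M=\lfloor 2\epsilon_0/(3m(p)r)\rfloor$. Choosing $A_2$ small enough that $m(p)r\le 2\epsilon_0/3$ for all $p\in[p_c-A_2 r^{-2},p_c)$ — which holds since $m(p)\preceq (p_c-p)^{1/2}\le A_2^{1/2}r^{-1}$ by \eqref{e:masy} — guarantees $M\ge 1$. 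Then for every nonzero $u$ with $\|u\|_\infty=N\le M$ we have $m(p)\langle x+ru\rangle\le\tfrac32 m(p)Nr\le\tfrac32 m(p)Mr\le\epsilon_0$, so the near-critical lower bound applies and $\tau_p(x+ru)\ge\tfrac12 c_1(\tfrac32 Nr)^{-d+2}\succeq (Nr)^{-d+2}$. Summing over the $\asymp N^{d-1}$ vectors $u$ with $\|u\|_\infty=N$ and then over $1\le N\le M$,
\begin{equation*}
	\psi_{r,p}(x)\ \ge\ \sum_{N=1}^{M}\ \sum_{\|u\|_\infty=N}\tau_p(x+ru)\ \succeq\ r^{-d+2}\sum_{N=1}^{M}N\ \asymp\ r^{-d+2}\bigl(m(p)r\bigr)^{-2}\ =\ \frac{1}{r^d\,m(p)^2}.
\end{equation*}
Finally, $m(p)^{-2}\asymp (p_c-p)^{-1}\asymp\chi(p)$ as $p\uparrow p_c$ by \eqref{e:masy} and \eqref{e:chiasy}, so shrinking $A_2$ once more so that both asymptotics hold throughout $[p_c-A_2 r^{-2},p_c)$ gives $\psi_{r,p}(x)\succeq\chi(p)/r^d=\chi(p)/V$, which is \eqref{e:psi} for a suitable $c_\psi$. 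All constants produced depend only on $d$ and $L$.

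The main obstacle is the near-critical continuity estimate $\tau_{p_c}(y)-\tau_p(y)\preceq (p_c-p)\langle y\rangle^{-d+4}$ with a constant uniform in both $p\le p_c$ and $y$: this is exactly what upgrades the known lower bound on $\tau_{p_c}$ into a lower bound on $\tau_p$ that survives throughout the plateau window $\langle y\rangle\lesssim\xi(p)$, and its proof uses the $d>6$ convolution/bubble bound in an essential way (some care is also needed to make the Russo-type differentiation rigorous, e.g.\ by working in finite volume and passing to the limit). Everything else — the scale decomposition of $\psi_{r,p}(x)$, the uniform two-sided comparison $\|x+ru\|_\infty\asymp r\|u\|_\infty$ from \eqref{e:xulb}--\eqref{e:xuub}, and the relation $m(p)^{-2}\asymp\chi(p)$ near $p_c$ — is routine bookkeeping.
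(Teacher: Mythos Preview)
Your proof is correct and follows essentially the same approach as the paper's: both derive the continuity estimate $\tau_{p_c}(y)-\tau_p(y)\preceq (p_c-p)\langle y\rangle^{-d+4}$ via Russo's formula and the BK inequality (this is the paper's \cref{lem:Gamma}), combine it with the lower bound of \eqref{eq:twopointassumption} to get a pointwise lower bound on $\tau_p(y)$, and then sum over a shell $1\le\|u\|_\infty\le R$ of appropriate size. The only cosmetic difference is that you parametrise the cutoff via the mass as $M\asymp (m(p)r)^{-1}$ and first pass to the pointwise bound $\tau_p(y)\ge\tfrac{c_1}{2}\langle y\rangle^{-d+2}$, whereas the paper keeps $p_c-p$ as the parameter, retains both terms in \eqref{eq:tau_decomp}, and chooses $R^2=(2C_3(p_c-p)r^2)^{-1}$ to balance them; since $m(p)\asymp(p_c-p)^{1/2}$ these are equivalent.
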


\begin{lemma}
\label{lem:psigoal2}
Let $d>6$ and suppose that \eqref{eq:twopointassumption} holds on $\Z^d$.
Let $c_\psi$ be as in \cref{lem:psigoal1}. There exist positive constants $A_1$ and $M$ such that if $r>2$ and $0 \le p \le p_c-A_1 V^{-1/3}$
then
\begin{equation}
\label{e:psigoal2-a}
    \psi_{r,p}(x) -\psi^{\T}_{r,p}(x)
    \le
    \frac 12 c_\psi \frac{\chi(p)}{V}
\end{equation}
for every $x\in \T^d_r$ with $\| x \|_\infty \geq M$.
\end{lemma}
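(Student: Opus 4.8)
The plan is to establish the equivalent bound $\sum_{u\in\Z^d}\tau_p(x+ru)-\tau^\T_p(x)\le \tfrac12 c_\psi\,\chi(p)/V$, whose left-hand side equals $\psi_{r,p}(x)-\psi^\T_{r,p}(x)$ and is nonnegative by \eqref{eq:2ptcoupling}. Write $E_u=\{0\conn x+ru\}$. The coupling of \cite{BS96,HHI07}, used in the direction complementary to \eqref{eq:2ptcoupling}, gives $\tau^\T_p(x)\ge \P_p\big(\bigcup_{u\in\Z^d}E_u\big)$, so with $N:=\#\{u\in\Z^d:0\conn x+ru\}$ and Bonferroni's inequality,
\[
 \sum_{u\in\Z^d}\tau_p(x+ru)-\tau^\T_p(x)\ \le\ \E_p\big[(N-1)_+\big]\ \le\ \sum_{\{u,v\}:\,u\neq v}\P_p\big(0\conn x+ru,\ 0\conn x+rv\big).
\]
I would bound this last double sum (over unordered pairs) by $\tfrac12 c_\psi\chi/V$, splitting it according to whether $0\in\{u,v\}$.

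For the pairs containing $0$, use the tree-graph inequality $\P_p(0\conn x,\,0\conn x+rv)\le\sum_z\tau_p(z)\tau_p(x-z)\tau_p(x+rv-z)$ and split the $z$-sum at $\|z\|_\infty=r/4$. Because $x$ is identified with a point of $[-\tfrac r2,\tfrac r2)^d$, one has $\|x+rv-z\|_\infty\ge\tfrac12\|x+rv\|_\infty$ when $\|z\|_\infty\le r/4$ and $v\neq0$; extracting mass decay from $\tau_p(x+rv-z)$ via \cref{thm:2pt} and then performing the $z$-sum yields a bound of order $\bubble_p(x)\sum_{v\neq0}\langle x+rv\rangle^{-(d-2)}e^{-cm(p)\|x+rv\|_\infty}$. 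When $\|z\|_\infty>r/4$ one instead extracts a global factor $e^{-cm(p)r/8}$ from $\tau_p(z)$ and estimates the residual star-shaped diagram by a constant multiple of $M^{-(d-4)}\langle rv\rangle^{-(d-2)}e^{-cm(p)\|rv\|_\infty}$, using $\|x\|_\infty\ge M$. In both cases the $v$-sum is $\preceq \chi(p)/V$ by \cref{lem:unifmassint} (with $a=2$) and Remark~\ref{rk:mchi}, while the accompanying factor is $\preceq\bubble_p(x)\preceq M^{-(d-4)}$ by \cref{lem:tri_bubble_bounds} and $\|x\|_\infty\ge M$; so this contribution is $\le CM^{-(d-4)}\chi/V$.

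For pairs $\{u,v\}$ with $u,v\neq 0$, the essential point is to retain the \emph{diagonal removal} in the tree-graph sum: with $a_w:=\tau_p((x-z)+rw)$,
\[
 \sum_{\substack{u\neq v\\ u,v\neq 0}}a_u a_v\ \le\ \Big(\sum_{w\in\Z^d}a_w\Big)^2-\sum_{w\in\Z^d}a_w^2\ \le\ 2\,\tau_p(\overline{x-z})\,\psi_{r,p}(\overline{x-z})+\psi_{r,p}(\overline{x-z})^2,
\]
where $\overline{y}$ denotes the representative of $y$ in $[-\tfrac r2,\tfrac r2)^d\cap\Z^d$ and we used $\sum_w a_w=\tau_p(\overline{x-z})+\psi_{r,p}(\overline{x-z})$ together with $\sum_w a_w^2\ge\tau_p(\overline{x-z})^2$. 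Since $\psi_{r,p}(\,\cdot\,)\le C\chi(p)/V$ by \eqref{e:plateau-ub2}, this already carries an explicit factor $\chi/V$; summing $z$ and using \cref{lem:integr_tri_bubble_bounds} to handle the periodization of $\tau_p$ — and $\|x\|_\infty\ge M$ to see that the residual ``torus bubble'' $\sum_z\tau_p(z)\tau_p(\overline{x-z})\preceq\bubble_p(x)+\chi(p)^2/V\preceq M^{-(d-4)}+\chi(p)^2/V$ — gives a bound of order $\big(M^{-(d-4)}+\chi(p)^2/V\big)\chi/V+\chi(p)^3/V^2$. By \eqref{e:chiasy}, $\chi(p)\preceq V^{1/3}$ for $p\le p_c-A_1 V^{-1/3}$, so the terms with $\chi^2/V$ are $\preceq V^{-1/3}\chi/V$. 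Assembling the two parts and taking $M$, then $r_0$, sufficiently large yields the lemma.

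The main obstacle is exactly this bookkeeping. If one drops the diagonal-removal term, or estimates the $u=0$ sum crudely (e.g.\ $\tau_p(x+rv-z)\le 1$), the error picks up a contribution of order $\bubble_p(x)\asymp M^{-(d-4)}$, a fixed constant independent of $V$ and therefore enormously larger than the target $\chi(p)/V$. The estimate is rescued by the fact that each of the two sums contains a genuine factor $\chi/V$ — from $\psi_{r,p}$ in the ``both nonzero'' sum, and from the half-mass periodic sum $\sum_{v\neq 0}\langle x+rv\rangle^{-(d-2)}e^{-cm(p)\|x+rv\|_\infty}$ in the ``$u=0$'' sum — so that the remaining bubble-type quantity need only be made smaller than a fixed constant, which $\|x\|_\infty\ge M$ supplies; both the restriction $\|x\|_\infty\ge M$ and $p\le p_c-A_1V^{-1/3}$ enter here in an essential way. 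This is the part that occupies the bulk of the section, and it is the analogue of the lace-expansion bookkeeping used for weakly self-avoiding walk on the torus in \cite{MS22}, the diagonal removal playing the role of self-avoidance.
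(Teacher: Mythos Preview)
Your proof has a genuine gap at the very first step. You claim that the coupling of \cite{BS96,HHI07} gives $\tau^\T_p(x)\ge \P_p\bigl(\bigcup_u E_u\bigr)$, i.e.\ that $\{0\conn_\Z x+ru\}$ implies $\{0\conn_\T x\}$ under the coupling. This is the wrong direction. The exploration coupling of \cite[Proposition~2.1]{HHI07} is built so that the \emph{torus} cluster of the origin is contained in the \emph{projection} of the $\Z^d$ cluster, i.e.\ $\{0\conn_\T x\}\subseteq\bigcup_u E_u$; this is precisely what yields \eqref{eq:2ptcoupling}. The reverse inclusion can fail: in the exploration, a $\Z^d$ edge whose torus projection has already been examined (and found closed) is resampled independently and may be open, so the $\Z^d$ cluster can reach a lift of $x$ while the torus cluster does not reach $x$. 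Consequently
\[
\psi_{r,p}(x)-\psi^\T_{r,p}(x)=\E_p[N]-\tau^\T_p(x)\ \ge\ \E_p[N]-\P_p\Bigl(\bigcup_u E_u\Bigr)=\E_p[(N-1)_+],
\]
which is the opposite of the inequality you need.

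This is exactly why the paper's starting point \eqref{eq:HHI07} carries the additional term $\sum_u\P(\{0\conn x+ru\}\cap\{0\conn_\T x\}^c)$: it accounts for the event that a $\Z^d$ connection to a lift of $x$ exists without a corresponding torus connection. That term is bounded in \cref{lem:2ndterm} via a further diagrammatic decomposition (the inclusion \eqref{eq:HHI07diagram}), and it is this term that forces the appearance of $\triangle_p(x)$ and $\chi(p)^3/V$ in the final estimate rather than merely $\bubble_p(x)$ and $\chi(p)^2/V$. Your diagonal-removal bookkeeping for $\sum_{u\neq v}\P(E_u\cap E_v)$ is essentially the content of \cref{lem:1st-term} and is fine (and indeed your split according to whether $0\in\{u,v\}$ is unnecessary, since the ``both nonzero'' argument already bounds the full off-diagonal sum). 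But it handles only half of the problem; the second half requires the coupling analysis that you have bypassed with an incorrect inequality.
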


\begin{proof}[Proof of \eqref{e:plateau_lower} subject to \cref{lem:psigoal1,lem:psigoal2}]
By definition,
\begin{equation}
    \tau_p^\T(x) = \tau_p(x) + \psi_{r,p}(x) - [ \psi_{r,p}(x) -\psi^{\T}_{r,p}(x)].
\end{equation}
By \cref{lem:psigoal1,lem:psigoal2}, if $\|x\|_\infty \ge M$ and $p_c-A_2r^{-2} \le p
\le p_c-A_1 V^{-1/3}$, then we have the lower bound
\begin{equation}
    \tau_p^\T(x)
    \ge
    \tau_p(x) + c_\psi \frac{\chi(p)}{V} - \frac 12 c_\psi \frac{\chi(p)}{V}
    =
    \tau_p(x) +  \frac 12 c_\psi \frac{\chi(p)}{V},
\end{equation}
which is the desired estimate.
\end{proof}

\subsubsection{Proof of \cref{lem:psigoal1}}

In this section we prove the lower bound on $\psi_{r,p}(x)$ stated in
\cref{lem:psigoal1}.
We begin with the following simple observation.

\begin{lemma}
\label{lem:Gamma}
Let $d>6$ and suppose that \eqref{eq:twopointassumption} holds on $\Z^d$.
The inequality
\begin{align}
\label{e:Gamma4}
    \tau_{p_c}(x)-\tau_p(x) & \preceq
    \frac{p_c-p}
    {\langle x\rangle^{d-4}}.
\end{align}
holds for every $p_c/2 \le  p \le p_c$ and $x\in \Z^d$.
\end{lemma}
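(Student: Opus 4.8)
The plan is to write $\tau_{p_c}(x)-\tau_p(x)$ as an integral in $p$ of the derivative of the two-point function and to control that derivative by a bubble diagram. Since the event $\{0\leftrightarrow x\}$ may depend on infinitely many edges, I would first run the argument in finite volume: for each $N$ set $\tau_q^{\Lambda_N}(x)=\P_q(0\xleftrightarrow{\Lambda_N}x)$, which is a polynomial in $q$, so that by Russo's formula $\frac{d}{dq}\tau_q^{\Lambda_N}(x)=\sum_{e}\P_q(e\text{ pivotal for }\{0\xleftrightarrow{\Lambda_N}x\})$, the sum being over edges $e=\{u,v\}$ of $\Lambda_N$. If such an $e$ is pivotal and $0$ is currently connected to $x$, then the open clusters of $0$ and of $x$ in $\omega\setminus e$ (within $\Lambda_N$) are vertex-disjoint and contain $u$ and $v$ one apiece, so that after possibly swapping $u$ and $v$ the event $\{0\leftrightarrow u\}\circ\{v\leftrightarrow x\}$ occurs; the BK inequality together with monotonicity in the volume then yields the standard pivotal bound $\P_q(e\text{ pivotal for }\{0\xleftrightarrow{\Lambda_N}x\})\le\tau_q(u)\tau_q(x-v)+\tau_q(v)\tau_q(x-u)$.

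Summing this over all edges $e=\{u,v\}$ with $0<\|u-v\|_1\le L$ and writing $v=u+w$, each term collapses to a shifted bubble diagram, $\sum_u\tau_q(u)\tau_q(x-u-w)=\bubble_q(x-w)$ and $\sum_u\tau_q(u+w)\tau_q(x-u)=\bubble_q(x+w)$, so there is a constant $C_L$ depending only on $d$ and $L$ with $\frac{d}{dq}\tau_q^{\Lambda_N}(x)\le C_L\sum_{w\in\Z^d:\,\|w\|_\infty\le L}\bubble_q(x-w)$ uniformly in $N$. Since $\bubble_q$ is increasing in $q$ we have $\bubble_q(y)\le\bubble_{p_c}(y)$, and the $p=p_c$ case of \cref{lem:tri_bubble_bounds} (equivalently \eqref{eq:twopointassumption} and the convolution estimate \eqref{e:ab_convolution}, which is also finite at $y=0$ since $d>6$) gives $\bubble_{p_c}(y)\preceq\langle y\rangle^{-(d-4)}$. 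Together with the elementary comparison $\langle x-w\rangle\asymp\langle x\rangle$ uniformly over $\|w\|_\infty\le L$ and $x\in\Z^d$ (treating $\langle x\rangle\ge2L$ and $\langle x\rangle<2L$ separately and using $d>4$), this shows $\frac{d}{dq}\tau_q^{\Lambda_N}(x)\preceq\langle x\rangle^{-(d-4)}$ uniformly in $q\le p_c$, $x$, and $N$.

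Integrating over $q\in[p,p_c]$ gives $\tau_{p_c}^{\Lambda_N}(x)-\tau_p^{\Lambda_N}(x)\preceq(p_c-p)\langle x\rangle^{-(d-4)}$, and letting $N\to\infty$, so that $\tau_q^{\Lambda_N}(x)\uparrow\tau_q(x)$ for each fixed $q$, yields the claimed inequality for all $p\le p_c$ (in particular for $p_c/2\le p\le p_c$). I do not expect a genuine obstacle here, since this is a routine application of the pivotal/tree-graph bound; the only points needing care are the finite-volume bookkeeping that legitimizes the differentiation in $p$ and the uniform comparison of $\langle x-w\rangle$ with $\langle x\rangle$ over bounded shifts $w$, both of which are standard.
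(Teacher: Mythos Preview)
Your proposal is correct and follows essentially the same route as the paper: finite-volume approximation, Russo's formula, the BK bound on pivotals giving a bubble diagram, monotonicity to $p_c$, the bubble estimate $\bubble_{p_c}(x)\preceq\langle x\rangle^{-(d-4)}$, integration over $[p,p_c]$, and the limit $N\to\infty$. The only cosmetic difference is that the paper bounds the pivotal event for $\{u,v\}$ by $\{0\leftrightarrow u\}\circ\{u\leftrightarrow x\}$ (using a single endpoint and absorbing the degree into the constant via the $1/p$ factor), which lands directly on $\bubble_p(x)$ without the shifted bubbles and the $\langle x-w\rangle\asymp\langle x\rangle$ comparison you carry out; both variants are standard and equivalent.
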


\begin{proof}
The proof of \eqref{e:Gamma4} is a consequence of the following standard differential
inequality (cf. \cite{AN84}).
Let $\tau_p^n(x) = \P(0 \conn x \text{ inside }[-n,n]^d)$. It is easy to see that for every $n>0$, $\tau^n_p(x)$ is differentiable in $p$ and that $\tau_p^n(x) \to \tau_p(x)$ as $n \to \infty$.
Let $p\in [\frac 12 p_c,p_c]$.
By Russo's Formula and the BK inequality
(with the sum over the undirected bonds in $[-n,n]^d$)
we have
\begin{align}
    \frac{d}{dp}\tau^n_p(x)	&= \frac1p\sum\limits_{\{u,v\}}\P_{p}(\{u,v\} \text{ is pivotal for } 0 \conn x\text{ inside }[-n,n]^d,\, \{u,v\} \text{ is open}) \nnb
    						&\leq \frac1p\sum\limits_{\{u,v\}}\P_{p}(\{0 \conn u \text{ inside }[-n,n]^d\} \circ \{u \conn x\text{ inside }[-n,n]^d\}) \nnb
    						\label{e:convest0}
    						&  \preceq
                                (\tau_p*\tau_p)(x).
\end{align}
It follows by monotonicity in $p$ and the
bound on the bubble from \cref{lem:tri_bubble_bounds} that
\begin{equation}
\label{e:convest}
    \frac{d}{dp}\tau^n_p(x)
    \preceq (\tau_{p_c}*\tau_{p_c})(x)
    \preceq \frac{1}{\langle x\rangle^{d-4}}.
\end{equation}
Integration of \eqref{e:convest} over $[p,p_c]$, followed by
the limit as $n\to \infty$, gives \eqref{e:Gamma4}.
\end{proof}

We now apply \cref{lem:Gamma} to complete the proof of \cref{lem:psigoal1}.

\begin{proof}[Proof of \cref{lem:psigoal1}]
Let $x \in \T_r^d$.
To obtain a lower bound on $\psi_{r,p}$,
we may sum in \eqref{e:psidef}
over only those $u\in \Z^d$ with $\|u\|_\infty \le R$ with $R\geq 1$ a large number depending on $r$ and $p_c-p$
to be chosen shortly.
By \eqref{eq:twopointassumption} and \eqref{e:Gamma4}, there exist positive constants $c_1$ and $C_1$ such that, for every $y \in \Z^d$,
\begin{align}
\label{eq:tau_decomp}
    \tau_p(y) & = \tau_{p_c}(y) - \big(\tau_{p_c}(y) - \tau_p(y)\big)
    \ge
    \frac{c_1}{\langle y\rangle^{d-2}} -  \frac{C_1 (p_c-p)}{\langle y\rangle^{d-4}}  .
\end{align}
With this,
together with \eqref{e:xulb}--\eqref{e:xuub} we see
that there exist positive constants $c_2, C_2,C_3$ such that
\begin{align}
    \sum_{u\in \Z^d: u \neq 0} \tau_p(x+ru) &\ge
    \sum_{1\leq  \|u\|_\infty \le R} \tau_p(x+ru)
    \nnb & \ge
    \frac 23 c_1\sum_{1\leq \| u\|_\infty\le R} \frac{1}{\|ru\|_\infty^{d-2}}
    - 2C_1 (p_c-p)\sum_{1\leq \|u\|_\infty \le R} \frac{1}{\| ru\|_\infty^{d-4}}
    \nnb &\ge
    \frac{c_2}{r^{d-2}}R^2 -  \frac{C_2 (p_c-p)}{r^{d-4}} R^4
     =
    \frac{c_2}{r^{d-2}}R^2 \left(1 - C_3(p_c-p)  r^2 R^2 \right)
\end{align}
for every $r,R\geq 1$ and $p_c/2 \leq p <p_c$.
Now we choose $R^2=(2C_3 (p_c-p) r^{2})^{-1}$, and require $p_c-p \leq A_2 r^{-2}$ with
$A_2$ chosen small enough for $R$ to be indeed greater than $1$.  This gives
\begin{align}
    \sum_{u\in \Z^d: u \neq 0} \tau_p(x+ru)
    & \ge
    \frac{c_2 R^2}{2r^{d-2}} =
    \frac{c_3}{(p_c-p)r^d} \succeq \frac{\chi(p)}{V}
\end{align}
for every $r\geq 2$ and
$p \in [p_c-  A_2 r^{-2}, p_c)$,
and completes the proof.
\end{proof}

\subsubsection{Proof of \cref{lem:psigoal2}}

We now prove \cref{lem:psigoal2}, which states that there exist constants $M$ and $A_1$ such that if
$p \le p_c-A_1V^{-1/3}$
then
\begin{equation}
\label{e:psigoal}
    \psi_{r,p}(x) -\psi^{\T}_{r,p}(x)
    \le
    \frac 12 c_\psi r^{-d}\chi(p)
\end{equation}
for every $x\in \T^d_r$ with $\|x\|_\infty \ge M$,
where $c_\psi$ is the constant from \cref{lem:psigoal1}.
 In order to prove this, we will prove the following more general inequality.

\begin{lemma}
\label{lem:psigoal2_diagram}
Let $d>6$ and suppose that \eqref{eq:twopointassumption} holds on $\Z^d$.
There exists a constant $C$ such that the inequality
\begin{align}
\label{e:psigoal2-b}
	\psi_{r,p}(x) - \psi_{r,p}^\T(x)
    &\leq C\frac{\chi(p)}{V}
    \left(\triangle_p(x)
    +\frac{\chi(p)^3}{V} \right),
\end{align}
holds for every  $r>2$, $x\in \T^d_r$ and $p<p_c$.
\end{lemma}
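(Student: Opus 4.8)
The plan is to prove \eqref{e:psigoal2-b} by recognising its left-hand side as the defect in the union bound \eqref{eq:2ptcoupling} and controlling it via an inclusion--exclusion argument that exploits the fact that distinct lifts $x+ru$ and $x+rv$ of a torus point lie at $\Z^d$-distance at least $r$. Since $\psi^\T_{r,p}(x)=\tau^\T_p(x)-\tau_p(x)$ and $\psi_{r,p}(x)=\sum_{u\neq 0}\tau_p(x+ru)$, the quantity to bound is exactly $D:=\sum_{u\in\Z^d}\tau_p(x+ru)-\tau^\T_p(x)\ge 0$. First I would invoke the coupling of percolation on $\Z^d$ and on $\T^d_r$ of \cite{HHI07,HHII11} in the form of a Bonferroni-type lower bound on $\tau^\T_p(x)$; since $\sum_u\P_p(0\leftrightarrow x+ru)=\tau_p(x)+\psi_{r,p}(x)<\infty$ for $p<p_c$ makes the Bonferroni bound legitimate, this gives
\[
D\ \preceq\ \sum_{\substack{u,v\in\Z^d\\ u\neq v}}\P_p\bigl(0\leftrightarrow x+ru,\ 0\leftrightarrow x+rv\bigr).
\]

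Next I would estimate each summand for $u\neq v$. By the tree-graph (BK) inequality, $\P_p(0\leftrightarrow x+ru,\,0\leftrightarrow x+rv)\le\sum_{w}\tau_p(w)\tau_p(x+ru-w)\tau_p(x+rv-w)$; splitting the sum over $w$ according to which of $x+ru$, $x+rv$ is nearer to $w$, the farther point lies at distance $\ge\frac12\|r(u-v)\|_\infty\ge\frac r2$ from $w$, so \cref{thm:2pt} bounds the corresponding factor by $g(\|r(u-v)\|_\infty)$ with $g(t):=Ct^{-(d-2)}e^{-\frac{c}{2}(p_c-p)^{1/2}t}$. Pulling this factor out of the sum and bounding the two remaining two-factor sums by bubble diagrams yields
\[
\P_p\bigl(0\leftrightarrow x+ru,\ 0\leftrightarrow x+rv\bigr)\ \le\ g\bigl(\|r(u-v)\|_\infty\bigr)\,\bigl(\bubble_p(x+ru)+\bubble_p(x+rv)\bigr),
\]
and summing over $u\neq v$, using that $g$ depends only on $\|r(u-v)\|_\infty$ to symmetrise the bracket, gives
\[
D\ \preceq\ \Bigl(\sum_{u\in\Z^d}\bubble_p(x+ru)\Bigr)\Bigl(\sum_{k\in\Z^d\setminus\{0\}}g(\|rk\|_\infty)\Bigr).
\]

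To conclude I would apply the preliminary estimates directly. \cref{lem:unifmassint} with $a=2$ and $\nu=\frac{c}{2}(p_c-p)^{1/2}$, together with $(p_c-p)^{-1}\preceq\chi(p)$ (a consequence of \eqref{e:chiasy}) and $r^d=V$, gives $\sum_{k\neq 0}g(\|rk\|_\infty)\preceq(p_c-p)^{-1}V^{-1}\preceq\chi(p)/V$. The bubble bound of \cref{lem:integr_tri_bubble_bounds} gives $\sum_u\bubble_p(x+ru)\le\bubble_p(x)+C\chi(p)^2/V\le\triangle_p(x)+C\chi(p)^2/V$, where the last step uses $\bubble_p(x)\le\triangle_p(x)$ (immediate since $\tau_p(0)=1$). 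Multiplying the two factors and absorbing $\chi(p)^2/V\le\chi(p)^3/V$ via $\chi(p)\ge\tau_p(0)=1$ yields $D\preceq\frac{\chi(p)}{V}\bigl(\triangle_p(x)+\frac{\chi(p)^3}{V}\bigr)$, which is \eqref{e:psigoal2-b}. The delicate point is the middle step: one must extract the decay in $\|r(u-v)\|_\infty$ from the tree-graph bound, rather than using the plain symmetric bound summed over all $u,v$ (which leaves terms too large for $p$ far from $p_c$); this is what converts the double sum into a product of a bubble sum and a convergent geometric-type sum, each already controlled by the preliminary lemmas. One must also retain the exponential factors from \cref{thm:2pt} throughout, since the $k$-sum diverges without them as $p\uparrow p_c$.
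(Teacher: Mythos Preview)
Your argument has a genuine gap at the very first step, the ``Bonferroni-type lower bound'' on $\tau^\T_p(x)$. Under the coupling of \cite{HHI07}, the inclusion goes only one way: one has $\{0 \xleftrightarrow[\T]{} x\}\subseteq\bigcup_u\{0\leftrightarrow x+ru\}$ in $\Z^d$, which is what yields the upper bound \eqref{eq:2ptcoupling}. It does \emph{not} give the reverse inclusion, so one cannot simply apply inclusion--exclusion to $\bigcup_u\{0\leftrightarrow x+ru\}$ and conclude a lower bound on $\tau^\T_p(x)$. What the coupling actually produces is the two-term estimate \eqref{eq:HHI07}: in addition to the pairwise-intersection sum you wrote, there is a second contribution $\sum_u\P(\{0\leftrightarrow x+ru\}\cap\{0\xleftrightarrow[\T]{}x\}^c)$ coming precisely from configurations where $0$ reaches a lift $x+ru$ in $\Z^d$ yet fails to reach $x$ in the torus. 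This second term is not dominated by the first; the paper handles it separately via the four-factor diagrammatic inclusion \eqref{eq:HHI07diagram} and the analysis of \cref{lem:2ndterm}, which in fact produces the triangle $\triangle_p(x)$ (rather than the bubble) and the $\chi^3/V$ term in the final bound.

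Your treatment of the first term---extracting the factor $g(\|r(u-v)\|_\infty)$ from the tree-graph bound by splitting on which of $x+ru$, $x+rv$ is nearer to the branch point $w$, and then factoring the double sum into $\bigl(\sum_u\bubble_p(x+ru)\bigr)\bigl(\sum_{k\neq 0}g(\|rk\|_\infty)\bigr)$---is correct and is essentially an alternative route to \cref{lem:1st-term}. But this only controls half of \eqref{eq:HHI07}; to complete the proof you still need to bound the second term, which is the content of \cref{lem:2ndterm}.
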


\begin{proof}[Proof of \cref{lem:psigoal2} given \cref{lem:psigoal2_diagram}]
By taking $p \le p_c - A_1 V^{-1/3}$, we see from the bound on the susceptibility
in \eqref{e:chiasy} that $\chi(p) \preceq A_1^{-1} V^{1/3}$, so the term $V^{-1}\chi(p)^3$
can be made as small as desired by taking $A_1$ sufficiently large.
By \eqref{e:Tbd}, the triangle term $\triangle_p(x)$ can be made as small as desired
by taking $\|x\|_\infty \geq M$ with $M$ sufficiently large.
Thus we can choose the constants $A_1,M$ in such a way that the right-hand side of
\eqref{e:psigoal2-b} is at most $\frac 12 c_\psi \chi(p)/V$.
This gives the desired inequality \eqref{e:psigoal}.
 \end{proof}

We turn now to the proof of Lemma~\ref{lem:psigoal2_diagram}.
We build upon the coupling of percolation on $\Z^d$ and $\T^d_r$ developed by
Heydenreich and van der Hofstad  \cite[Proposition 2.1]{HHI07}.
  With this coupling, they proved that
\begin{align}
\label{eq:HHI07}
    \psi_{r,p}(x) -\psi^{\T}_{r,p}(x)
     \le
    \frac 12 \sum_{u\in \Z^d}\sum_{v\neq u}\P(0 \conn x+ru,\, x+rv)
    + \sum_{u \in\Z^d} \P(\{0 \conn x+ru\} \cap \{0 \xleftrightarrow[\T]{} x\}^c)
\end{align}
for every $x\in \T^d_r$, where $\{x \xleftrightarrow[\T]{} y\}$
denotes the event that $x$ is connected to $y$ by an open path in $\T^d_r$ in the coupling
(see \cite[(5.4)]{HHI07}).

The proof of \cref{lem:psigoal2_diagram}
is immediate using the  following two lemmas to bound the two terms in \eqref{eq:HHI07}.
In the first lemma, there is
room to spare by a factor $\chi$ in the last term; this is consistent with
\cite{HHI07}.  Also we see the bubble rather than the triangle, which again has room to spare.

\begin{lemma}
\label{lem:1st-term}
Let $d>6$ and suppose that \eqref{eq:twopointassumption} holds on $\Z^d$.
The inequality
\begin{equation}
    \sum_{u\in \Z^d}\sum_{v\neq u}\P(0 \conn x+ru,\, x+rv)
    \preceq
    \frac{\chi}{V} \left(\bubble_p(x) + \frac{\chi^2}{V} \right)
\end{equation}
holds for every $0\leq p <p_c$, $r >
2$, and $x\in \T^d_r$.
\end{lemma}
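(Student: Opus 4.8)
The plan is to combine the tree-graph inequality with the bubble periodization estimate of \cref{lem:integr_tri_bubble_bounds}. First I would apply the tree-graph inequality $\P(\{0\conn a\}\cap\{0\conn b\})\le \sum_{w\in\Z^d}\tau_p(w)\tau_p(a-w)\tau_p(b-w)$ of Aizenman and Newman \cite{AN84} (the same BK argument used to obtain \eqref{e:piv2}) with $a=x+ru$ and $b=x+rv$, giving
\[
\sum_{u\in\Z^d}\sum_{v\neq u}\P(0\conn x+ru,\,x+rv)\;\le\;\sum_{u\in\Z^d}\sum_{v\neq u}\sum_{w\in\Z^d}\tau_p(w)\,\tau_p(x+ru-w)\,\tau_p(x+rv-w).
\]

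The key step is the elementary inclusion $\mathbbm{1}(v\neq u)\le \mathbbm{1}(u\neq 0)+\mathbbm{1}(v\neq 0)$ (valid because $u=v=0$ forces $u=v$), which, together with the symmetry of the summand under $u\leftrightarrow v$, bounds the right-hand side by
\[
2\sum_{u\neq 0}\sum_{v\in\Z^d}\sum_{w\in\Z^d}\tau_p(w)\,\tau_p(x+ru-w)\,\tau_p(x+rv-w)
=2\sum_{u\neq 0}\sum_{w\in\Z^d}\tau_p(w)\,\tau_p(x+ru-w)\Bigl(\sum_{v\in\Z^d}\tau_p(x+rv-w)\Bigr).
\]
Since the points $\{x+rv-w:v\in\Z^d\}$ are distinct elements of $\Z^d$, the inner sum is at most $\sum_{z\in\Z^d}\tau_p(z)=\chi(p)$, uniformly in $w$; pulling it out leaves $2\chi(p)\sum_{u\neq 0}(\tau_p*\tau_p)(x+ru)=2\chi(p)\sum_{u\neq 0}\bubble_p(x+ru)$. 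Finally I would invoke the bubble bound of \cref{lem:integr_tri_bubble_bounds}, namely $\sum_{u\in\Z^d}\bubble_p(x+ru)\le \bubble_p(x)+C\chi(p)^2 V^{-1}e^{-\frac c4 m(p)r}$, and discard the $u=0$ term (and the harmless factor $e^{-\frac c4 m(p)r}\le 1$) to get $\sum_{u\neq 0}\bubble_p(x+ru)\le C\chi(p)^2 V^{-1}$. This yields $\sum_u\sum_{v\neq u}\P(0\conn x+ru,\,x+rv)\preceq \chi(p)^3 V^{-2}$, which is stronger than, hence implies, the stated bound $\preceq \tfrac{\chi(p)}{V}\bigl(\bubble_p(x)+\tfrac{\chi(p)^2}{V}\bigr)$ since $\bubble_p(x)\ge 0$.

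There is no serious obstacle here; the only point requiring care is where the volume gain $V^{-1}$ is produced. It comes from the restricted sum $\sum_{u\neq 0}\bubble_p(x+ru)$, which is exponentially small in $r$ because $\bubble_p$ itself has a strictly positive mass $c\,m(p)$ by \cref{lem:tri_bubble_bounds}. Routing the gain through the $u\neq 0$ sum is precisely what the inclusion $\mathbbm{1}(v\neq u)\le \mathbbm{1}(u\neq 0)+\mathbbm{1}(v\neq 0)$ and the subsequent symmetrization accomplish: a direct case split into $u=0$, $v=0$, and $u,v\neq 0$ would instead leave the contribution $\chi(p)\bubble_p(x)$ (from $u=0$, $v\neq 0$) with no volume gain, and recovering one would demand a more delicate treatment of $\sum_{v\neq 0}\tau_p((x-w)+rv)$ depending on whether $x-w$ lies in the fundamental domain $[-\tfrac r2,\tfrac r2)^d$.
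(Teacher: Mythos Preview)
Your approach contains an arithmetic slip in the final step that masks a genuine gap. After establishing the bound $2\chi(p)\sum_{u\neq 0}\bubble_p(x+ru)$ and then estimating the bubble sum by $C\chi(p)^2 V^{-1}$ via \cref{lem:integr_tri_bubble_bounds}, the product is $2C\chi(p)^3 V^{-1}$, not $\chi(p)^3 V^{-2}$. One factor of $V^{-1}$ is missing.

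The bound $\chi^3/V$ that your argument actually yields does \emph{not} imply the stated estimate $\tfrac{\chi}{V}\bigl(\bubble_p(x)+\chi^2/V\bigr)$: that would require $\chi^2 \preceq \bubble_p(x)+\chi^2/V$, which fails badly near the window where $\chi \asymp V^{1/3}$ (the left side is of order $V^{2/3}$ while the right side is $O(1)$). It is also too weak for the downstream application in \cref{lem:psigoal2_diagram,lem:psigoal2}, where the first term of \eqref{eq:HHI07} must ultimately be absorbed into a bound of order $\chi/V$.

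The loss occurs precisely at the step $\sum_{v\in\Z^d}\tau_p(x+rv-w)\le \chi$. This sum over a single coset of $r\Z^d$ is typically of order $\chi/V$ by \cref{lem:integr_tri_bubble_bounds}, but only when the coset representative in $[-\tfrac r2,\tfrac r2)^d$ is far from the origin; since $w$ ranges freely over $\Z^d$, that representative can be $0$ and the sum is then at least $\tau_p(0)=1$. Your symmetrisation trick $\mathbbm{1}(v\neq u)\le \mathbbm{1}(u\neq 0)+\mathbbm{1}(v\neq 0)$ does not circumvent this, because the unrestricted $v$-sum it leaves behind is exactly the problematic one. The paper recovers the missing $V^{-1}$ by first writing the tree vertex as $z=x+y+rw$ with $y\in\T_r^d$ and $w\in\Z^d$; the $u$- and $v$-sums then become sums of $\tau_p$ over $r\Z^d$-translates of the \emph{fixed torus point} $y$, and \cref{lem:integr_tri_bubble_bounds} applies to each, producing two factors of the form $\tau_p(y)+\chi/V$. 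The residual $(y,w)$-sum against $\tau_p(x+y+rw)$ then yields $\bubble_p(x)+\chi^2/V$ after one more use of \cref{lem:integr_tri_bubble_bounds}.
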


\begin{proof}
We use $x,y$ for torus points and $u,v,w$ for translating points in $\Z^d$,
and for clarity write the two-point function as
$\tau(u,v)$
in place of the usual
$\tau_p(v-u)$.
By the BK inequality,
\begin{align}
    &\sum_{u\in \Z^d}\sum_{v\neq u}\P(0 \conn x+ru,\, x+rv)
    \nnb & \qquad \le
    \sum_{z,u\in \Z^d}\sum_{v\neq u} \tau(0,z)\tau(z,x+ru) \tau(z,x+rv)
    \nnb
    & \qquad
    =
    \sum_{y\in\T_r^d} \sum_{w\in\Z^d}
    \tau(0,x+y+rw)\sum_{u\in\Z^d} \tau(y,r(u-w))
    \sum_{v\neq u}\tau(y,r(v-w))
    \nnb
    & \qquad
    =
    \sum_{y\in\T_r^d} \sum_{w\in\Z^d}
    \tau(0,x+y+rw)\sum_{u\in\Z^d} \tau(y,ru)
    \sum_{v\neq u}\tau(y,rv)
    ,
\label{e:1st-term}
\end{align}
where in the third line we replaced $z$ by $x+y+rw$, and in the fourth
we replaced $u$ by $u+w$ and $v$ by $v+w$.
For the sum over $v$, it follows from Lemma~\ref{lem:integr_tri_bubble_bounds} that
\begin{align}
    \sum_{v\neq u}\tau(y,rv)
    & =
    \sum_{v\neq u}\tau(y,rv) (\1_{u=0}+\1_{u\neq 0})
    \nnb & =
    \1_{u=0}\sum_{v\neq 0}\tau(y,rv) +\1_{u\neq 0}\sum_{v\neq u}\tau(y,rv)
    \nnb & \preceq
    \1_{u=0} \frac{\chi}{V} + \1_{u\neq 0}\left(\tau(0,y)+ \frac{\chi}{V} \right)
    \preceq
    \1_{u\neq 0} \tau(0,y) + \frac{\chi}{V}
    .
\end{align}
This leads, using Lemma~\ref{lem:integr_tri_bubble_bounds} again, to
\begin{align}
    \sum_{u\in\Z^d} \tau(y,ru)
    \sum_{v\neq u}\tau(y,rv)
    & \preceq
    \tau(0,y) \frac{\chi}{V} + \frac{\chi}{V} \left(\tau(0,y)+ \frac{\chi}{V} \right)
 \preceq
    \frac{\chi}{V} \left(\tau(0,y)+ \frac{\chi}{V} \right)
    .
\end{align}
Thus we have an upper bound on \eqref{e:1st-term} given by
\begin{align}
    \frac{\chi}{V}
    \sum_{y\in\T_r^d} \sum_{w\in\Z^d}
    \tau(0,x+y+rw)
    \left(\tau(0,y)+ \frac{\chi}{V} \right)
     =
    \frac{\chi^3}{V^2} + \frac{\chi}{V}
    \sum_{y\in\T_r^d} \sum_{w\in\Z^d}
    \tau(0,x+y+rw) \tau(0,y)
    .
\end{align}
We extend this last sum over $y$ to all of $\Z^d$ and
use the inequality for the bubble from Lemma~\ref{lem:integr_tri_bubble_bounds} to finally get that
\begin{equation}
    \sum_{u\in \Z^d}\sum_{v\neq u}\P(0 \conn x+ru,\, x+rv)
     \preceq \frac{\chi}{V} \left(\bubble_p(x) + \frac{\chi^2}{V} \right)
\end{equation}
as claimed.
\end{proof}

\begin{lemma}
\label{lem:2ndterm}
Let $d>6$ and suppose that \eqref{eq:twopointassumption} holds on $\Z^d$.
The estimate
\begin{equation}
     \sum_{u \in\Z^d} \P(\{0 \conn x+ru\} \cap \{0 \xleftrightarrow[\T]{} x\}^c)
    \preceq \frac{\chi(p)}{V} \left( \triangle_p(x) + \frac{\chi(p)^3}{V} \right).
\end{equation}
holds for every $0\leq p <p_c$, $r> 2$, and $x\in \T^d_r$.
\end{lemma}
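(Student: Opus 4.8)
The plan is to reduce the statement, via the Benjamini--Schramm/Heydenreich--van der Hofstad coupling that already underlies \eqref{eq:HHI07}, to a diagrammatic estimate of exactly the same flavour as the one carried out in the proof of \cref{lem:1st-term}. The heuristic is that a cluster of $0$ in $\Z^d$ which does not ``wrap around'' the torus projects faithfully onto a torus cluster; hence on the event $\{0\conn x+ru\}\cap\{0\xleftrightarrow[\T]{}x\}^c$ the $\Z^d$-cluster of $0$ must contain a vertex $w$ that is joined, inside the cluster, to a nontrivial translate $w+rv$ of itself for some $v\in\Z^d\setminus\{0\}$, and moreover this wrapping can be arranged to lie on the connection from $0$ to $x+ru$. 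Making this precise following \cite{HHI07}, the first step is to establish the inclusion
\begin{equation}
\label{e:wrap-inclusion}
	\{0 \conn x+ru\} \cap \{0 \xleftrightarrow[\T]{} x\}^c
	\subseteq \bigcup_{v \in \Z^d\setminus\{0\}}\ \bigcup_{w,z\in \Z^d}
	\Big(\{0\conn w\} \circ \{w \conn w+rv\} \circ \{w\conn z\} \circ \{z\conn x+ru\}\Big)
\end{equation}
for every $x\in\T^d_r$ and $u\in\Z^d$. I expect this to be the main obstacle: one must check from the structure of the coupling that every discrepancy between the $\Z^d$ and torus pictures is witnessed by a wrap of the cluster, and, crucially, that the wrap can be placed on the backbone from $0$ to $x+ru$, so that the subsequent BK bound yields a \emph{triangle} attached to $x$ rather than a detached bubble (the latter would cost an extra factor of $\chi(p)$ and destroy the estimate).

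Granting \eqref{e:wrap-inclusion}, a union bound together with the BK inequality and translation invariance (so that $\P(w\conn w+rv)=\tau_p(rv)$) give
\begin{equation}
	\sum_{u \in \Z^d} \P\big(\{0 \conn x+ru\} \cap \{0 \xleftrightarrow[\T]{} x\}^c\big)
	\le \Big(\sum_{v\neq 0} \tau_p(rv)\Big)\sum_{w\in\Z^d}\tau_p(w)\sum_{z\in\Z^d}\tau_p(z-w)\sum_{u\in\Z^d}\tau_p(x+ru-z),
\end{equation}
where the prefactor is $\sum_{v\neq 0}\tau_p(rv)=\psi_{r,p}(0)\le C\chi(p)/V$ by the first inequality of \cref{lem:integr_tri_bubble_bounds} applied at the origin.

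The remaining triple sum is then estimated from the inside out, exactly as in the proof of \cref{lem:1st-term}. The sum over $u$ is bounded using \cref{lem:integr_tri_bubble_bounds} by $\tau_p(y_z)+C\chi(p)/V$, where $y_z\in[-\frac r2,\frac r2)^d$ is the representative of $x-z$ modulo $r$, and $\tau_p(y_z)\le\sum_{u'}\tau_p(x-z+ru')$; summing this over $z$ against $\tau_p(z-w)$ produces $\sum_{u'}\bubble_p(x-w+ru')$, which by \cref{lem:integr_tri_bubble_bounds} is at most $\bubble_p(y'_w)+C\chi(p)^2/V$ with $y'_w$ the representative of $x-w$; and summing over $w$ against $\tau_p(w)$ produces $\sum_{u''}\triangle_p(x+ru'')\le\triangle_p(x)+C\chi(p)^3/V$ by a final application of \cref{lem:integr_tri_bubble_bounds}. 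Each error term picks up the single free factor $\chi(p)$ from the convolution sum above it; collecting everything and using the bound on the prefactor yields
\begin{equation}
	\sum_{u \in \Z^d} \P\big(\{0 \conn x+ru\} \cap \{0 \xleftrightarrow[\T]{} x\}^c\big)
	\preceq \frac{\chi(p)}{V}\Big(\triangle_p(x) + \frac{\chi(p)^3}{V}\Big),
\end{equation}
which is the assertion of the lemma.

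In summary, the diagrammatic computation is routine given \cref{lem:tri_bubble_bounds,lem:integr_tri_bubble_bounds}: it is essentially the proof of \cref{lem:1st-term} with one additional convolution, so that a triangle (rather than a bubble) and the factor $\chi(p)^3/V$ (rather than $\chi(p)^2/V$) appear. The genuinely model-specific and delicate point is \eqref{e:wrap-inclusion}, whose verification relies on a careful reading of the \cite{HHI07} coupling (which fundamental-domain edges are used, and where discrepancies between $\Z^d$ and $\T^d_r$ can arise); beyond this input, the torus analysis here parallels the weakly self-avoiding walk argument of \cite{MS22}, differing only in the details.
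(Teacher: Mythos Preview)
Your diagrammatic bookkeeping after \eqref{e:wrap-inclusion} is fine, but the inclusion itself is not what the \cite{HHI07} coupling delivers, and I do not believe it holds. The inclusion actually established there (recorded in the paper as \eqref{eq:HHI07diagram}) is
\[
\{0 \conn x+ru\} \cap \{0 \xleftrightarrow[\T]{} x\}^c
\;\subseteq\;
\bigcup_{z\in\Z^d}\bigcup_{a\in\T_r^d}\bigcup_{v_1\neq v_2}
\{0\conn z\}\circ\{z\conn a+rv_1\}\circ\{z\conn a+rv_2\}\circ\{a+rv_2\conn x+ru\},
\]
i.e.\ the wrap is witnessed by two disjoint arms from a branching vertex $z$ to two distinct lifts of a torus point $a$, with one lift $a+rv_2$ on the backbone. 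This is \emph{not} the same as having an arm from a backbone vertex $w$ to its own translate $w+rv$. A quick sanity check: your inclusion (with the redundant $z$ collapsed) would give a three-piece diagram and hence the strictly stronger bound $\frac{\chi}{V}(\bubble_p(x)+\chi^2/V)$; that this is better than the stated triangle bound is a warning sign. More concretely, consider a configuration in which the backbone $0\to x+ru$ passes through a vertex $b$, a side branch runs from $b$ to $c_1$, and $c_2:=c_1+rv$ lies on the backbone beyond $b$. Then the paper's inclusion holds with $z=b$, $a+rv_1=c_1$, $a+rv_2=c_2$, but no choice of $w$ produces three \emph{disjoint} pieces $\{0\conn w\}\circ\{w\conn w+rv\}\circ\{w\conn x+ru\}$: any path between the two lifts must reuse the backbone segment $b\to c_2$ or the side branch $b\to c_1$.

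Because the correct inclusion produces the entangled factor $\tau_p(a+rv_1-z)$ rather than a free $\tau_p(rv)$, the BK sum does not factor as $\psi_{r,p}(0)\cdot\sum_u\triangle_p(x+ru)$. The paper therefore proceeds by a sequence of substitutions (writing $a+rv_2-x=y+rv$, $z-x=y+z'+ru'$, etc.) that peel off the inner sums one at a time via \cref{lem:integr_tri_bubble_bounds}; the key step is recognising $\sum_{u'}\sum_{y,z'\in\T_r^d}\tau_p(x+y+z'+ru')\tau_p(y)\tau_p(z')\le\sum_{u'}\triangle_p(x+ru')$, which then yields $\triangle_p(x)+C\chi^3/V$. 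Your instinct to iterate \cref{lem:integr_tri_bubble_bounds} is exactly right, but it has to be applied to this non-factoring diagram rather than the one coming from \eqref{e:wrap-inclusion}.
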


\begin{proof}
Our starting point is the set inclusion
\begin{multline}
\label{eq:HHI07diagram}
\{0 \conn x+ru\} \cap \{0 \xleftrightarrow[\T]{} x\}^c \subseteq
 \bigcup\limits_{z\in \Z^d}\bigcup\limits_{a \in \T_r^d}\bigcup\limits_{v_1,v_2 \in \Z^d: v_1 \neq v_2} \{0\conn z\}\circ \{z\conn a+rv_1\}\\
 \circ \{z\conn a+rv_2\}\circ\{a+rv_2\conn x+ru\}
\end{multline}
for every $x\in \T^d_r$ and $u \in \Z^d$, which arises from
the coupling of torus and $\Z^d$ percolation in \cite[Proposition~2.1]{HHI07}.
We use $a,b,x,y,z$ for torus points and use $u,v,w$ for translating vectors.
It follows from the set inclusion \eqref{eq:HHI07diagram} together with a union bound and the BK inequality that
\begin{multline}
\P(\{0 \conn x+ru\} \cap \{0 \xleftrightarrow[\T]{} x\}^c) \\\leq
	\sum\limits_{a \in \T_r^d}
	\sum\limits_{z,v_1 \in \Z^d}
	\sum\limits_{v_2 \neq v_1}
	\tau_p(z)\tau_p(a+rv_2 - z)\tau_p(a+rv_1 - z)\tau_p(x+ru -a-rv_2).
\end{multline}

We translate to more convenient vertices, as follows.
First, we write the $\Z^d$ point $a+rv_2-x$ uniquely as a torus point $y$ plus $rv$
with $v\in \Z^d$, and similarly for the others, to obtain
\begin{align}
	a+ rv_2 -x &= y + rv,  & y\in \T_r^d, \quad v \in \Z^d,\nnb
	a+r v_1 -x &= y + rv +rv',& v'=v_2-v_1 \neq 0,  \\ \nonumber	
	z -x&= y+z' +r u',& z'\in \T_r^d,\quad u' \in \Z^d.
\end{align}
This gives
\begin{multline}
\sum_{u\in \Z^d}\P(\{0 \conn x+ru\} \cap \{0 \xleftrightarrow[\T]{} x\}^c)
\leq \sum\limits_{y,z' \in \T_r^d}\sum\limits_{v,u'\in\Z^d}
\tau_p(x+y+z'+ru')\tau_p( -z'+r(v - u'))
\\ \times
\sum\limits_{v' \neq 0}\tau_p(-z'+r(v'+v - u'))\sum\limits_{u\in\Z^d}\tau_p(-y + r(u-v)).
\end{multline}
We bound the sums over $u$ and $v'$ with Lemma~\ref{lem:integr_tri_bubble_bounds} and obtain
\begin{align}
	\sum\limits_{u\in\Z^d}\tau_p(-y + r(u-v)) =\sum\limits_{u\in\Z^d}\tau_p(-y + ru) &\leq \tau_p(y) + C\frac{\chi(p)}{V} , \\
	\sum\limits_{v' \neq 0}\tau_p(-z'+r(v'+v - u'))=\sum\limits_{v' \neq v-u'}\tau_p(-z'+rv') &\leq \tau_p(z')\1_{v\neq {u'}} + C\frac{\chi(p)}{V}.
\end{align}
Then we perform the sum over $v$, which after translating $v$ by $u'$ is bounded similarly using
\begin{align}
&\sum_{v\in \Z^d}\tau_p( -z'+r(v-u'))
    \Big(\tau_p(z')\1_{v\neq u'} + C\frac{\chi(p)}{V}\Big)	
    \nnb &\qquad =\sum_{v\in \Z^d}\tau_p( -z'+rv)
    \Big(\tau_p(z')\1_{v\neq 0} + C\frac{\chi(p)}{V}\Big)
    \nnb
    &\qquad\leq C\frac{\chi(p)}{V}\tau_p(z') + C\frac{\chi(p)}{V}
    \sum\limits_{v \in \Z^d}\tau_p( -z'+rv) \nnb
	&\qquad\preceq \frac{\chi(p)}{V}\tau_p(z') + \frac{\chi(p)^2}{V^2}.
\end{align}
This leads to
\begin{multline}
\label{e:p_e}
\sum_{u\in \Z^d}\P(\{0 \conn x+ru\} \cap \{0 \xleftrightarrow[\T]{} x\}^c)\\\preceq
\frac{\chi(p)}{V}
\sum\limits_{y,z' \in \T_r^d}\sum_{u'\in\Z^d}
\tau_p(x+y+z'+ru')\left(\tau_p(y) + \frac{\chi(p)}{V}\right)
\left(\tau_p(z')+\frac{\chi(p)}{V}\right).
\end{multline}
We expand out the brackets and recognise that the term containing the product $\tau_p(y)\tau_p(z')$ obeys
\begin{align}
\label{e:diagr_tri_bound}
&\sum_{{u'\in\Z^d}}\sum\limits_{y,z' \in \T^d_r}\tau_p(x+y+z'+r u')\tau_p(z')\tau_p(y)
\nnb &\qquad
=\sum_{{u'\in\Z^d}}\sum\limits_{y,z' \in \T^d_r}\tau_p(z')\tau_p(y+z'-z') \tau_p(x+y+z'+r{u'})
\nonumber\\
&\qquad \leq \sum_{u'\in \Z^d} \triangle_p(x+ru')
\preceq  \triangle_p(x) + \frac{\chi(p)^3}{V}.
\end{align}
Meanwhile, the two terms containing exactly one of $\tau_p(y)$ or $\tau_p(z')$ are equal and can be expressed  as
\begin{align}
&\frac{\chi(p)}{V}\sum_{{u'\in\Z^d}}\sum\limits_{y,z' \in \T^d_r}\tau_p(x+y+z'+r{u'})\tau_p(z')
\nnb &\qquad
=\frac{\chi(p)}{V}\sum_{{w\in\Z^d}}\sum\limits_{z' \in \T^d_r}\tau_p(x+z'+w)\tau_p(z')
\leq \frac{\chi(p)^3}{V},
\end{align}
where we extended the sum over $z'$ to all of $\Z^d$ in the last inequality.
Finally, the term not containing either $\tau_p(y)$ or $\tau_p(z')$ can be expressed as
\begin{equation}
    \frac{\chi(p)^2}{V^2}\sum\limits_{y,z' \in \T_r^d}\sum_{{u'\in \Z^d}}
	\tau_p(x+y	+z'+ru') = \frac{\chi(p)^2}{V^2}\sum_{z' \in \T_r^d}\sum_{w\in \Z^d}
  \tau_p(x + z'+ w) =  \frac{\chi(p)^3}{V}.
\end{equation}
 Summation of these contributions gives
	\begin{equation}
	\sum_{u \in\Z^d} \P(\{0 \conn x+ru\} \cap \{0 \xleftrightarrow[\T]{} x\}^c)
    \preceq \frac{\chi(p)}{V} \Big(\triangle_p(x) + \frac{\chi(p)^3}{V}\Big),
\end{equation}
and the proof is complete.
\end{proof}

  \begin{proof}[Proof of \cref{lem:psigoal2_diagram}]
\cref{lem:1st-term,lem:2ndterm} give bounds on the two terms on the right-hand side
of \eqref{eq:HHI07}, namely
\begin{equation}
        \psi_{r,p}(x) -\psi^{\T}_{r,p}(x)
        \preceq
        \frac{\chi}{V} \left(\bubble_p(x) + \frac{\chi^2}{V} \right)
        +
        \frac{\chi}{V} \left( \triangle_p(x) + \frac{\chi^3}{V} \right).
\end{equation}
Since the bubble is bounded above by the triangle and the susceptibility is at least $1$,
this gives the desired estimate
\begin{equation}
        \psi_{r,p}(x) -\psi^{\T}_{r,p}(x)
        \preceq
        \frac{\chi}{V} \left( \triangle_p(x) + \frac{\chi^3}{V} \right)
\end{equation}
and the proof is complete.
  \end{proof}

\subsection{The torus triangle condition: Proof of Theorem~\ref{thm:tricon}}
\label{sec:triangle_cond}

To conclude this section,
we show how the torus plateau leads to easy proofs that $p_\T$ lies
in the scaling window and that the torus triangle condition holds.

\begin{proof}[Proof of Theorem~\ref{thm:tricon}]
Fix $\eps>0$ sufficiently small that $\eps^{-1} \geq A_2$, where $A_2$ is as in \cref{thm:plateau}. Recalling from \eqref{e:chiasy} that $\chi \asymp (p_c-p)^{-1}$ and setting $p_0=p_c-\eps^{-1}V^{-1/3}$, we have that $\chi(p_0) \asymp
\eps V^{1/3}$. On the other hand, for sufficiently large $r$
(depending on $M$) the
lower bound of \eqref{e:plateau_lower} applies to give
\begin{equation}
\label{e:chiTp0}
    \chi^{\T}(p_0)
    \succeq
    \sum_{x \in \T_r^d\colon\|x\|_\infty > M}  V^{-1}\chi(p_0)
    \succeq (V-(2M+1)^d) V^{-1}\chi(p_0) \succeq \eps V^{1/3}.
\end{equation}
Since we also have by the coupling that $\chi^\T \le \chi$ it follows that there exist positive constants $c_1$ and $C_2$ such that
\begin{equation}
    c_1 \eps V^{1/3} \leq \chi^{\T}(p_0) \le \chi(p_0) \leq C_2 \eps V^{1/3}.
\end{equation}
A second application of \eqref{e:chiasy} yields that there exists a constant $C_3 \geq 1$ such that if we define $p_1=p_c-C_3\eps^{-1}V^{-1/3}$ then $\chi^\T(p_1) \leq \chi(p_1) \leq c_1 \eps V^{1/3}$. It follows by the intermediate value theorem that if we define $\lambda = \lambda(\eps)= c_1 \eps$ then the
$p_\T$
defined by $\chi^{\T}(p_\T)=\lambda V^{1/3}$ (which does exist if $r$ exceeds some value
$r_0(\lambda)$)
satisfies $p_1 \leq p_\T \leq p_0$ and hence that
$0 \le p_c- p_\T
\preceq \eps^{-1} V^{-1/3}$.
With the choice $\lambda_0=c_1A_2^{-1}$, this
concludes the proof
that $p_\T=p_\T(\lambda)$ lies in the scaling window if $\lambda \in (0,\lambda_0]$
and $r > r_0(\lambda)$.

Let $p<p_c$ and $x\in \T_r^d$.
The open torus triangle diagram is defined by
\begin{align}
\label{e:torus-tri}
    \triangle^{\T}_{p}(x)
    & =
    \sum_{y,z\in \T_r^d} \tau^{\T}_p(y) \tau^{\T}_p(z-y) \tau^{\T}_p(x-z),
\end{align}
and \eqref{eq:2ptcoupling} then implies that
\begin{align}
\triangle^{\T}_{p}(x)
    & \leq \sum_{y,z \in \T_r^d}\sum_{u,v,w \in \Z^d}\tau_p(y+ru)\tau_p(z-y+rv)\tau_p(x-z+rw).
\end{align}
We replace the index $v$ by $v'-u$ and then replace $w$ by $w'-v'$.
The above right-hand side becomes (after setting $y'=y+ru$ and $z'=z+rv'$)
\begin{align}
\sum_{y',z',w' \in \Z^d}\tau_p(y')\tau_p(z'-y')\tau_p(x-z'+rw')
    &= \sum_{w' \in \Z^d}\triangle_p(x+rw'),
\end{align}
with $\triangle_p(x+rw')$ the open $\Z^d$ triangle diagram.
By Lemma~\ref{lem:integr_tri_bubble_bounds}, this gives that
\begin{equation}
\label{e:tor_tri_cond-x}
 \triangle^{\T}_{p}(x) \leq \triangle_p(x) + C_1\frac{\chi(p)^3}{V}
 \leq \triangle_{p_c}(x) + C_1\frac{\chi(p)^3}{V}.
\end{equation}
With $p_\T$ as in the previous paragraph,
this implies that
\begin{equation}
\label{e:tor_tri_cond-x2}
 \triangle^{\T}_{p_\T}(x) \leq  \triangle_{p_c}(x) + C_1 C_2^3 \eps^3.
\end{equation}
With the bound
$\eps\le A_2^{-1}$, this concludes the proof of the torus triangle condition.

It remains to prove that the $a_0$-strong version of the triangle condition holds when
its $\Z^d$ counterpart holds with $\frac 12 a_0$.
But under this assumption it follows from \eqref{e:tor_tri_cond-x2} that
\begin{equation}
\label{eq:torus-triangle-strong}
\triangle_{p_\T}^\T(x) \leq \mathbbm{1}(x=0)+ \frac{1}{2}a_0 + C_1C_2^3 \eps^3.
\end{equation}
We require that $\eps^3 \le \eps_0^3 = a_0(2C_1C_2^3)^{-1}$.  With $\lambda\le
\lambda_1 = c_1\eps_0$, the right-hand side of \eqref{eq:torus-triangle-strong}
is at most $\mathbbm{1}(x=0)+a_0$, and the proof is complete.
\end{proof}

\section{
Plateau within the scaling window:  Proof of Theorem~\ref{thm:plateau}
}
\label{sec:pcbd}

We now turn to the part of \cref{thm:plateau} concerning the case that $p$ lies within the scaling window of the torus.
The window consists of $p$ values with $|p-p_c| \le AV^{-1/3}$ with $A$ arbitrary
but fixed.

\subsection{Lower bound in the window}

We begin by proving the lower bound \eqref{e:plateaupc_lower}, which
follows simply from the monotonicity of $\tau_p^\T$ in $p$,
the lower bound below the window, and the comparison of $\tau_p$ with $\tau_{p_c}$
provided by \cref{lem:Gamma}.

\begin{proof}[Proof of \eqref{e:plateaupc_lower}]
Let $A_1$ and $A_2$ be the constants from the ``below the scaling window" part of \cref{thm:plateau}. Fix
$A>0$.
It suffices by monotonicity of the torus two-point function to prove the claimed estimate in the case that $A \geq A_1$
and $p=p_c- A  V^{-1/3}$.  We denote this value of $p$
by $p'$.
Thus, there exists a constant $r_0$ depending on $A$ such that
$A V^{-1/3} \leq A_2 r^{-2} = A_2 V^{-2/d} le p_c/2$ for every $r\geq r_0$.

By \eqref{e:chiasy}, $\chi(p') \ge c_\chi A^{-1}V^{1/3}$ for some constant $c_\chi$ depending
only on $d$ and $L$.
It then follows from \eqref{e:plateau_lower} that there exists a constant $M$ (depending on $d$ and $L$) such that
 \begin{equation}
 \label{e:taupprime}
    \tau^\T_{p'}(x) \geq \tau_{p'}(x)+\frac{c_2c_\chi}{A V^{2/3}}
\end{equation}
for every $x\in \T^d_r$ with $\|x\|_\infty \geq M$.
By \cref{lem:Gamma} (with our assumption that $r \ge r_0$ guaranteeing
that $p' \ge p_c/2$), we have moreover that
\begin{equation}
\label{e:tauTwindowlb}
   \tau_{p_c}(x) - \tau_{p'}(x)
    \preceq \frac{A}{V^{1/3}\langle x\rangle^{d-4}}  \preceq \frac{A r^2}{V^{1/3}} \tau_{p_c}(x).
\end{equation}
Since the prefactor $A r^2 V^{-1/3}$ tends to zero as $r\to\infty$, it follows from \eqref{e:taupprime}--\eqref{e:tauTwindowlb} that for each $\delta>0$ there exists
an $r_1 \geq r_0$ depending on $A$ such that
\begin{equation}
    \tau^\T_{p'}(x) \geq (1-\delta)\tau_{p_c}(x)+
    \frac{c_2c_\chi}{A V^{2/3}}
\end{equation}
for every $x\in \T^d_r$  with $\|x\|_\infty \geq M$ and every $r \ge r_1$.
This completes the proof.
\end{proof}

\subsection{Upper bound in the window}

It remains to prove the upper bound. To do so we first consider
the case $p=p_c$.  We then extend the upper bound to the window $(p_c,p_c+AV^{-1/3}]$ by proving that, in
this window, the two-point function changes only up to a multiplicative
factor (that can be chosen to be arbitrarily close to one) plus an additive constant term
of order $V^{-2/3}$.

At $p_c$, the upper bound is not new and was
proven previously in \cite[Theorem~1.7]{HS14}. However, our proof, which is based on
the extrinsic (Euclidean)
distance, seems more direct than that of
\cite{HS14} where the  intrinsic distance was used.
Our proof relies
on the extrinsic one-arm exponent estimate
\begin{equation}
\label{e:1-arm-ext}
\P_{p_c}(0 \conn \partial \Lambda_\ell) \asymp \frac{1}{\ell^2}
\end{equation}
of Kozma and Nachmias \cite{KN11} (i.e.\ the $p=p_c$ case of \cref{thm:main1arm}).

\begin{prop}
Let $d>6$ and suppose that \eqref{eq:twopointassumption} holds on $\Z^d$.
There is a $C>0$ such that
\label{prop:ub-at-p_c}
\begin{equation}
	\tau_{p_c}^\T(x) \leq \tau_{p_c}(x) + CV^{-2/3}
\end{equation}
for all $r>2$ and all $x\in\T_r^d$.
\end{prop}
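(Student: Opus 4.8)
The plan is to combine the coupling of percolation on $\Z^d$ and on $\T_r^d$ of Heydenreich and van der Hofstad \cite[Proposition~2.1]{HHI07} (the one used in Section~\ref{sec:below_window}) with the extrinsic one-arm estimate \eqref{e:1-arm-ext} of Kozma and Nachmias, following a variant of the method of van der Hofstad and Sapozhnikov \cite{HS14}. Writing $x\in\T_r^d\equiv[-\tfrac r2,\tfrac r2)^d\cap\Z^d$, the ``torus excess'' $\tau^\T_{p_c}(x)-\tau_{p_c}(x)\ge0$ is produced by configurations in which the torus cluster of $0$ wraps around the torus -- equivalently, in the coupling, by a torus cluster of $0$ made of several $\Z^d$ clusters glued at ``coincidence points'' $z$ with both $z$ and $z+rw$ ($w\neq0$) in the lifted cluster, as in the inclusion \eqref{eq:HHI07diagram}. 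The difficulty is that every crude way of bounding this excess is too weak when $d>6$: the union bound $\tau^\T_{p_c}(x)-\tau_{p_c}(x)\le\sum_{u\neq0}\tau_{p_c}(x+ru)$ behind \eqref{e:plateau_upper} is \emph{infinite} (the series behaves like $r^{-(d-2)}\sum_{u\neq0}\|u\|_\infty^{-(d-2)}$, which diverges), and remains infinite after taking the supremum over $p<p_c$; a single application of the one-arm estimate only shows a wrapping cluster reaches extrinsic distance $\asymp r$ with probability $\asymp r^{-2}=V^{-2/d}$; and the crude cluster-size tail $\P^\T_{p_c}(|\mathbf{C}_0|\ge n)\preceq n^{-1/2}$ only gives an excess of order $V^{-1/2}$.

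The improvement should come from fixing a threshold $R=R(r)\asymp r^{(d-6)/6}$ and splitting every winding vector and every coincidence displacement according to whether its $\ell_\infty$-norm is at most $R$ or exceeds $R$. A ``short'' winding or coincidence is controlled using only the $\Z^d$ two-point estimate \eqref{eq:twopointassumption} and the convolution bounds \eqref{e:ab_convolution}; the prototype calculation, using $\langle x+ru\rangle\asymp r\|u\|_\infty$ for $u\neq0$, is
\[
\sum_{0<\|u\|_\infty\le R}\tau_{p_c}(x+ru)
\;\preceq\;\frac1{r^{d-2}}\sum_{0<\|u\|_\infty\le R}\frac1{\|u\|_\infty^{d-2}}
\;\preceq\;\frac{R^2}{r^{d-2}}\;\asymp\;V^{-2/3},
\]
which is what fixes the value of $R$ (this needs $d>6$ so that $R\to\infty$; for $d=6$ one takes $R\asymp1$). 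A ``long'' winding or coincidence forces part of the cluster to traverse extrinsic distance $\succeq Rr\asymp r^{d/6}$, where one invokes \eqref{e:1-arm-ext} -- indispensable here, since the union bound over the long displacements diverges -- and each further winding of the cluster costs another one-arm factor, so the contributions involving a long displacement, organised as a series over the number of coincidence points, should be geometrically small and of lower order than $V^{-2/3}$.

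The hard part will be carrying this out rigorously \emph{within} the coupling: one must perform the short/long dichotomy simultaneously on all of the a priori unboundedly many coincidence points, keep track of how the constituent $\Z^d$ clusters are joined, and verify that the resulting diagrammatic series converges with sum $\preceq\tau_{p_c}(x)+V^{-2/3}$ uniformly in $x$. This is the extrinsic-distance analogue of the intrinsic-distance argument of \cite{HS14}, and I expect this bookkeeping -- rather than any single estimate -- to be the most delicate step.
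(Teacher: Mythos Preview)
Your threshold $R\asymp r^{(d-6)/6}$ and your ``short'' calculation
\[
\sum_{0<\|u\|_\infty\le R}\tau_{p_c}(x+ru)\preceq \frac{R^2}{r^{d-2}}\asymp V^{-2/3}
\]
are exactly what the paper uses (there the threshold is called $M$). The divergence is in your treatment of the ``long'' case.

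You propose to organise the long contributions as a series over coincidence points via the Heydenreich--van der Hofstad structure \eqref{eq:HHI07diagram}, expecting ``each further winding \ldots\ costs another one-arm factor''. This is both unnecessary and, as written, insufficient. A single long displacement $\|u\|_\infty>R$ only forces the $\Z^d$ cluster of $0$ to reach distance $\succeq Rr$, yielding one one-arm factor $\preceq (Rr)^{-2}=V^{-1/3}$; you still need to find a \emph{second} factor, and the coincidence-point expansion does not obviously supply it (the diagrammatic bounds of \cref{lem:1st-term,lem:2ndterm} involve $\chi(p)$, which is infinite at $p_c$, so that machinery does not transfer). The bookkeeping you flag as ``the most delicate step'' is in fact a genuine gap.

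The paper's proof bypasses all of this with a single observation: if the torus path from $0$ to $x$ lifts to a $\Z^d$ path of diameter $\ge 3Mr$, then the one-arm events $\{0\leftrightarrow\partial\Lambda_{Mr}\}$ and $\{x\leftrightarrow x+\partial\Lambda_{Mr}\}$ occur \emph{disjointly} on the torus (the long path itself provides the disjoint witnesses). One application of the BK inequality and two applications of \eqref{e:1-arm-ext} give
\[
\P^\T_{p_c}(E_{\ge 3Mr}(0,x))\le \P_{p_c}(0\leftrightarrow\partial\Lambda_{Mr})^2\preceq (Mr)^{-4}=V^{-2/3}.
\]
No coincidence points, no series, no iteration. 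The key idea you are missing is that the second one-arm factor comes from the \emph{other endpoint} $x$, not from additional windings.
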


\begin{proof}
Fix some large and positive integer $M$.
Since $\Z^d$ covers the torus $\T^d_r$, every path in $\T^d_r$ can be lifted to a path in $\Z^d$ that is unique up to the choice of starting point.
For $x,y \in \T_r^d$, we define $E_{\geq \ell}(x,y)$ to be the event that $x$ and $y$ are connected by a simple $\T_r^d$-path that lifts to a  $\Z^d$-path of diameter greater than or equal to $\ell$, and we define $E_{\leq \ell}(x,y)$ similarly. In addition, we define $A_{\geq \ell}(x)$ to be the event that there exists some simple $\T_r^d$-path starting from $x$ that lifts to a $\Z^d$-path of diameter at least $\ell$.
We have trivially that $\{0 \xleftrightarrow[\T]{} x\} = E_{\leq 3Mr}(0,x)\cup E_{\geq 3Mr}(0,x)$, so that
\begin{align}
\label{e:PMr}
		\P^\T_{p_c}(0 \xleftrightarrow[\T]{} x)
		&\leq \P^\T_{p_c}(E_{\leq 3Mr}(0,x))
		+ \P^\T_{p_c}(E_{\geq 3Mr}(0,x)).
\end{align}
Note that on $E_{\geq 3Mr}(0,x)$, the events $A_{\geq Mr}(0)$ and  $A_{\geq Mr}(x)$ must occur disjointly.
Also, in the coupling between torus and $\Z^d$ percolation, we have
\begin{align}
	A_{\geq \ell}(0) &\subset \{0 \xleftrightarrow[\Z]{} \partial \Lambda_{\ell}\} \qquad \text{and} \qquad
	A_{\geq \ell}(x) \subset \{x \xleftrightarrow[\Z]{}  x + \partial \Lambda_{\ell}\},
\end{align}
where we recall that $\Lambda_\ell=[-\ell,\ell]^d\cap \Z^d$. Thus, by the BK inequality on the torus,
\begin{align}
	\P^\T_{p_c}(E_{\geq 3Mr}(0,x))
	&\leq \P^\T_{p_c}(A_{\geq Mr}(0)\circ A_{\geq Mr}(x)) \nnb
	&\leq \P^\T_{p_c}(A_{Mr}(0))\P^\T_{p_c}(A_{Mr}(x))
    \leq \P_{p_c}(0 \conn \partial \Lambda_{Mr})^2.
\end{align}
Using the one-arm upper bound of \eqref{e:1-arm-ext}, this gives
\begin{align}
	\P^\T_{p_c}(E_{\geq 3Mr}(0,x))
	&\preceq \frac{1}{M^4r^4}.
\end{align}

For the first term of \eqref{e:PMr} we simply use the coupling and a union bound to see that
\begin{align}
	\P^\T_{p_c}(E_{\leq 3Mr}(0,x))
	&\leq \P_{p_c}(\cup_{\|u\|_\infty \leq 3M } \{0 {\conn} x+ru\})
    \leq \tau_{p_c}(x)+ \sum_{1\leq\|u\|_\infty \leq 3M}\tau_{p_c}(x+ru).
\end{align}
The latter sum can be bounded above  by an integral over the $d$-dimensional box of
radius $3M$, namely
\begin{equation}
     \sum_{1\leq\|u\|_\infty \leq 3M}\tau_{p_c}(x+ru)
     \preceq
     \int_{\|u\|_\infty \le 3M} \frac{1}{{\langle ru\rangle}^{d-2}} \D u \preceq M^2 r^{-(d-2)}.
\end{equation}
Together, these bounds imply that there exists a constant $C$ such that
\begin{equation}
	\tau_{p_c}^\T(x) \leq \tau_{p_c}(x) + CM^2r^{-(d-2)} + CM^{-4}r^{-4}.
\end{equation}
The choice $M = r^{(d-6)/6}$ gives the desired upper bound
$\tau_{p_c}(x) + CV^{-2/3}$ at $p = p_c$.
\end{proof}

Next, we prove an upper bound at the top of the scaling window.
For  $p\in (p_c,p_c+A V^{-1/3}]$ we use
the \emph{intrinsic distance} $d_\text{int}$, which is the graph distance on the percolation
configuration.  If $x,y$ are not connected in the configuration, then $d_\text{int}(x,y)=\infty$.
Given a percolation configuration, we define the (random) \emph{intrinsic ball}
centred at $x$ and of radius $\ell$ by
\begin{equation}
	B_\mathrm{int}(x,\ell) = \{y \in \T_r^d : d_{\rm int}(x,y) \le \ell \}.
\end{equation}
Thus
\begin{equation}
\{x \conn y \text{ by a path of length}\leq \ell\} = \{y\in B_\mathrm{int}(x,\ell)\}.
\end{equation}
We write the boundary of the intrinsic ball as
\begin{equation}
    \partial B_\mathrm{int}(x,\ell) =  B_\mathrm{int}(x,\ell)  \setminus B_\mathrm{int}(x,\ell-1)
    = \{y \in \T_r^d : d_{\rm int}(x,y) = \ell \}.
\end{equation}
Given a subset $g$ of
edges of the edge set $\B$ of $\T_r^d$ or $\Z^d$, we define $B^g_\mathrm{int}(x,\ell)$ similarly as $B_\mathrm{int}(x,\ell)$
except that the intrinsic distance from $x$ to $y$ is determined only using
paths consisting of edges of $g$.

Kozma and Nachmias \cite{KN09} computed the asymptotic
behaviour of the critical instrinsic one-arm probability in high dimensions to be
\begin{equation}
\label{e:1-arm-int}
\P_{p_c}(\partial B_\mathrm{int}(0,\ell) \neq \varnothing) \asymp \frac{1}{\ell}
\end{equation}
for every $\ell \geq 1$.
In fact, they also proved an extension of the upper bound of \eqref{e:1-arm-int} involving the intrinsic ball restricted
to a subgraph $g$; their proof also extends immediately to the torus as explained in \cite[Theorem~2.1(i)]{HS14}
and implies in particular that
\begin{equation}
  \label{e:bd-intrad-torus}
  \max_{g \subset \B(\T_r^d)}\P_{p_c}^\T(\partial B^{g}_\mathrm{int}(0,\ell)\neq \varnothing) \preceq \frac{1}{\ell}.
\end{equation}
for every $r,\ell\geq 1$.
(An important remark is that the proof of \eqref{e:bd-intrad-torus} does not
require the lace expansion on $\T_r^d$, but instead uses results for $\Z^d$ along with the coupling of torus and $\Z^d$ percolation---this
is discussed in greater detail in the verification of \cite[Theorem~4.1(b)]{HHII11}.
As such, there is no circular
reasoning here, nor an appeal to any result obtained via the torus lace
expansion.)

We first isolate two estimates in the following lemma, whose proof uses a standard coupling
of percolation at different values of $p$.

\begin{lemma}
For $0\leq p<q \leq 1$, $\ell \ge 1$, and $g \subset \B(\T_r^d)$,
\begin{align}
	\label{e:ub-intrad-pq}
    \P^\T_{q}(\partial B^g_\mathrm{int}(0,\ell) \neq \varnothing)
	&\leq \left(\frac{q}{p}\right)^\ell
    \P^\T_{p}(\partial B^g_\mathrm{int}(0,\ell) \neq \varnothing ),  \\
	\label{e:supercrit-coupling-2}
	\P_{q}^\T(x \in B_\mathrm{int}(0,\ell))
	&\leq \left(\frac{q}{p}\right)^\ell \P_{p}^\T(x \in B_\mathrm{int}(0,\ell)).
\end{align}
\end{lemma}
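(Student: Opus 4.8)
The plan is to prove both estimates at once via the monotone coupling of Bernoulli percolation at densities $p<q$, combined with the elementary fact that intrinsic distances can only increase when edges are removed.

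First I would set up the coupling on the torus edge set $\B=\B(\T_r^d)$: take $\omega_q$ to have the law of Bernoulli-$q$ percolation and, conditionally on $\omega_q$, retain each $\omega_q$-open edge in $\omega_p$ independently with probability $p/q$, keeping every $\omega_q$-closed edge $\omega_p$-closed. Then $\omega_p$ has the law of Bernoulli-$p$ percolation, $\omega_p\subseteq\omega_q$, and for any random edge set $S=S(\omega_q)$ that is a measurable function of $\omega_q$ and consists of $\omega_q$-open edges one has $\P(S\subseteq\omega_p\mid\omega_q)=(p/q)^{|S|}$.

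For \eqref{e:supercrit-coupling-2}, set $A=\{x\in B_\mathrm{int}(0,\ell)\}$; on $\{\omega_q\in A\}$ choose canonically (e.g.\ lexicographically least) a self-avoiding path $\gamma(\omega_q)$ from $0$ to $x$ consisting of $\omega_q$-open edges and of minimal length $|\gamma(\omega_q)|=d_{\mathrm{int}}^{\omega_q}(0,x)\leq\ell$. If $\gamma(\omega_q)\subseteq\omega_p$ then $0$ and $x$ are joined by an $\omega_p$-open path of length $\leq\ell$, so $\omega_p\in A$; hence
\begin{equation}
\P_p^\T(A)\geq\E\bigl[\1\{\omega_q\in A\}\,(p/q)^{|\gamma(\omega_q)|}\bigr]\geq(p/q)^\ell\,\P_q^\T(A),
\end{equation}
which rearranges to \eqref{e:supercrit-coupling-2}. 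For \eqref{e:ub-intrad-pq}, set $A=\{\partial B^g_\mathrm{int}(0,\ell)\neq\varnothing\}$; on $\{\omega_q\in A\}$ pick canonically a vertex $v$ at $\omega_q$-intrinsic distance exactly $\ell$ from $0$ using edges of $g$, together with a corresponding geodesic $\gamma(\omega_q)$, so $|\gamma(\omega_q)|=\ell$. If $\gamma(\omega_q)\subseteq\omega_p$ then the intrinsic distance from $0$ to $v$ using edges of $g$ in $\omega_p$ is at least its value $\ell$ in $\omega_q$ (edges were only removed) and at most $\ell$ (the path $\gamma(\omega_q)$ survives), hence equals $\ell$, so $v\in\partial B^g_\mathrm{int}(0,\ell)$ in $\omega_p$ and $\omega_p\in A$. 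The same computation, now with $|\gamma(\omega_q)|=\ell$ exactly, yields \eqref{e:ub-intrad-pq}.

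The only genuinely delicate point is the second inequality: the event $\{\partial B^g_\mathrm{int}(0,\ell)\neq\varnothing\}$ is \emph{not} increasing --- adding edges can create shortcuts that decrease the intrinsic radius --- so one cannot invoke a generic ``minimal witness lies in an up-set'' argument. The resolution, which I would emphasise, is that it suffices to keep open a single $\omega_q$-geodesic of length exactly $\ell$: monotonicity of the intrinsic distance under edge deletion then forces its endpoint to remain on the radius-$\ell$ intrinsic sphere in $\omega_p$ as well. The remaining ingredients --- measurability of the canonical choices of $v$ and $\gamma$, reduction to finitely many edges when $g$ is finite, and the conditional identity $\P(S\subseteq\omega_p\mid\omega_q)=(p/q)^{|S|}$ --- are routine.
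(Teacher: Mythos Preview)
Your proof is correct and follows essentially the same approach as the paper's: both use the standard monotone coupling of Bernoulli percolation at two parameters (the paper via i.i.d.\ uniform edge labels, you via thinning), pick a geodesic of length $\ell$ (resp.\ $\le\ell$) in the $q$-configuration, and observe that keeping it open costs exactly $(p/q)^\ell$ (resp.\ $\ge(p/q)^\ell$). Your explicit handling of the non-monotonicity of $\{\partial B^g_{\mathrm{int}}(0,\ell)\neq\varnothing\}$ via the inequality $d_{\mathrm{int}}^{\omega_p}\ge d_{\mathrm{int}}^{\omega_q}$ is the same point the paper uses implicitly when it asserts that survival of the length-$\ell$ path suffices for $\eta_p^\T\in R_g(\ell)$; you have simply made the reasoning more transparent.
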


\begin{proof}
Let $0\leq p<q \leq 1$.  We begin with \eqref{e:ub-intrad-pq}.
Given a subset $g$ of the edge set of $\T_r^d$, we
write $R_g(\ell) =\{ \partial B^g_\mathrm{int}(0,\ell) \neq \varnothing \}$. We use the
standard
coupling of percolation configurations via uniform random variables assigned to each edge of the torus (see \cite[p.~11]{Grim99});
these uniform random variables are defined on some probability space $(\Omega,\mathcal{A}, \Q)$. We write $\eta^\T_p$ for the induced percolation configuration.
Since $R_g(\ell)$ depends on the edges inside $\T_r^d$ only, hence on finitely many edges, we can write
\begin{align}
	\Q(\eta_p^\T \in R_g(\ell), \eta_q^\T \in  R_g(\ell)) &= \sum_{\omega \in \{0,1\}^{\T_r^d}}
	\Q(\eta_p^\T \in R_g(\ell),\, \eta_q^\T = \omega,\, \omega \in R_g(\ell) ) \nnb
				&=\sum_{\omega \in R_g(\ell)}
				\Q(\eta^\T_p \in R_g(\ell)\mid \eta_q^\T = \omega)\Q(\eta_q^\T = \omega)
    .
\end{align}
Since the above left-hand side is at most
$\Q(\eta_p^\T \in R_g(\ell))$, \eqref{e:ub-intrad-pq} follows once we prove that
\begin{equation}
\label{e:qp}
    \Q(\eta^\T_p \in R_g(\ell)\mid \eta_q^\T = \omega) \ge \left(\frac{p}{q}\right)^\ell .
\end{equation}

To prove \eqref{e:qp}, we first observe that
on a specific configuration $\omega \in R_g(\ell)$,
there exists inside $\omega$
a deterministic path of open edges, starting from $0$, of
length $\ell$.
A fortiori, on the event
$\{\eta^\T_q = \omega\}$ there exist $\ell$ independent uniform
random variables $U_1,\cdots, U_\ell$ attached to these open edges such
that $U_i \leq q$ for all $1\leq i \leq \ell$.
For $\{\eta^\T_p \in R_g(\ell)\}$ to occur, it is
enough that $U_i \leq p$ for all $1\leq i \leq \ell$ which gives
\begin{align}
	\Q(\eta^\T_p \in R_g(\ell)\mid \eta_q^\T = \omega)
	&\geq
		\Q\Big(\bigcap\limits_{i = 1}^\ell \{U_i \leq   p
        \}\mid \eta_q^\T = \omega\Big)
		.
\end{align}
Since $U_1, \cdots, U_\ell$ are independent of the other uniform random variables,
the above right-hand side is equal to
\begin{align}
	\Q\Big(\bigcap\limits_{i = 1}^\ell \{U_i \leq p\}\mid \eta_q^\T = \omega\Big)
	&= \Q\Big(\bigcap\limits_{i = 1}^\ell \{U_i \leq p\}\mid \bigcap\limits_{i = 1}^\ell \{U_i \leq q \}\Big)
= \left(\frac{p}{q}\right)^\ell,
\end{align}
which proves \eqref{e:qp} and hence completes the proof of \eqref{e:ub-intrad-pq}.

The proof of \eqref{e:supercrit-coupling-2} is almost identical,
using the fact that on $\{x \in B_\mathrm{int}(0,\ell)\}$
there must exist a path of length less than or equal to $\ell$ connecting $0$ to $x$. Then
keeping the uniform random variables along this path
 open upon reducing
$q$ to $p$ gives the result.
\end{proof}

We now prove \eqref{e:plateaupc_upper} in the following proposition.

\begin{prop}
Let $d>6$ and suppose that \eqref{eq:twopointassumption} holds on $\Z^d$.
Fix $\delta \in (0,1]$.
For all
$A>0$
there exists a constant $C_A$ such that
\label{lem:ub-above-p_c}
\begin{equation}
\label{e::ub-above-p_c}
	\tau_{p}^\T(x) \leq
    (1+\delta)
    \tau_{p_c}(x) + C_{A} \delta^{-2} V^{-2/3}
\end{equation}
for all $r>2$, $x\in \T_r^d$ and $p \in (p_c,p_c+A V^{-1/3}]$.
\end{prop}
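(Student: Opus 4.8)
The plan is to reduce the statement at $p \in (p_c, p_c + AV^{-1/3}]$ to the already-established case $p = p_c$ (\cref{prop:ub-at-p_c}) by showing that the torus two-point function does not grow too much as $p$ increases across the window. The natural way to compare is through the intrinsic-distance decomposition already set up above: write
\begin{equation}
\label{e:ub-window-split}
    \tau_p^\T(x) = \P_p^\T(x \in B_\mathrm{int}(0,\ell)) + \P_p^\T(0 \xleftrightarrow[\T]{} x,\; x \notin B_\mathrm{int}(0,\ell))
\end{equation}
for a suitable intrinsic cutoff $\ell$, to be chosen of order $V^{1/3}$ (matching the width of the window so that $(q/p)^\ell = (1 + O(V^{-1/3}))^{O(V^{1/3})} = O(1)$; more precisely, with $\ell = \lfloor B V^{1/3} \rfloor$ and $B$ chosen large, one gets a multiplicative constant close to $1$ by taking $B$ small, or a bounded constant otherwise). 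For the first term of \eqref{e:ub-window-split}, apply the coupling inequality \eqref{e:supercrit-coupling-2} with $q = p$ and $p$ replaced by $p_c$ to get $\P_p^\T(x\in B_\mathrm{int}(0,\ell)) \le (p/p_c)^\ell \,\P_{p_c}^\T(x\in B_\mathrm{int}(0,\ell)) \le (p/p_c)^\ell\, \tau_{p_c}^\T(x)$, and then invoke \cref{prop:ub-at-p_c}. Choosing $\ell$ so that $(p/p_c)^\ell \le 1+\delta$ (using $\log(p/p_c) \le C A V^{-1/3}$ from \eqref{e:chiasy}-type control of $p_c - p$, here $p-p_c \le AV^{-1/3}$, together with $\ell \preceq \delta V^{1/3}$) yields a bound of the form $(1+\delta)\tau_{p_c}(x) + C_A' \delta^{-?} V^{-2/3}$ for this piece after absorbing the constant from \cref{prop:ub-at-p_c}; here one must be slightly careful that the multiplicative factor $(p/p_c)^\ell$ also multiplies the $V^{-2/3}$ error term, but since that factor is $\le 1 + \delta \le 2$ this only costs a harmless constant.

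For the second term of \eqref{e:ub-window-split}, the event forces $x$ to be connected to $0$ in the torus by a path of intrinsic length exceeding $\ell$, so in particular $\partial B_\mathrm{int}(0,\ell) \neq \varnothing$; one would like to say the cluster of $0$ reaches intrinsic radius $\ell$ \emph{and} $x$ lies in it. The cleanest route is to condition on $B_\mathrm{int}(0,\ell)$: on $\{x\notin B_\mathrm{int}(0,\ell),\, 0\xleftrightarrow[\T]{} x\}$ there is a vertex $w \in \partial B_\mathrm{int}(0,\ell)$ from which $x$ is reached off the ball, so by a BK/conditioning argument
\begin{equation}
    \P_p^\T(0\xleftrightarrow[\T]{} x,\; x\notin B_\mathrm{int}(0,\ell))
    \le \sum_{w} \P_p^\T\bigl(w \in \partial B_\mathrm{int}(0,\ell)\bigr)\,\max_{w}\tau_p^\T(x-w),
\end{equation}
but this is circular because $\tau_p^\T$ is what we are bounding. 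Instead I would iterate the split \eqref{e:ub-window-split}: bound $\tau_p^\T(x-w) \le \tau_{p_c}^\T(x-w)$ is false for $p > p_c$, so one really must use the coupling. The better approach is to bound the whole second term directly by $\E_p^\T[\,\#\partial B_\mathrm{int}(0,\ell)\,]$-type quantities or, most simply, by $\P_p^\T(\partial B_\mathrm{int}(0,\ell)\neq\varnothing)$ times a crude bound on the conditional connection probability; using \eqref{e:ub-intrad-pq} with $g = \B(\T_r^d)$ to pass from $p$ to $p_c$, then \eqref{e:bd-intrad-torus} (the restricted intrinsic one-arm bound $\preceq 1/\ell$), we get $\P_p^\T(\partial B_\mathrm{int}(0,\ell)\neq\varnothing) \le (p/p_c)^\ell \cdot C/\ell \preceq 1/\ell \asymp V^{-1/3}$. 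This is larger than $V^{-2/3}$, so a naive bound is not enough — one needs the second moment / the two-arm structure. Following the method of \cite{HS14}, I expect to bound the second term of \eqref{e:ub-window-split} by a convolution of two intrinsic one-arm events (one from $0$, one from $x$, meeting at an intermediate vertex) restricted to complementary edge sets, giving a bound of order $(1/\ell) \cdot (1/\ell) \cdot V^{-1}\cdot V = (1/\ell)^2 \asymp V^{-2/3}$ after summing the intermediate vertex over the torus — this is exactly the mechanism that produced $M^{-4}r^{-4}$ in \cref{prop:ub-at-p_c}, now transported through the coupling \eqref{e:ub-intrad-pq} at the cost of the bounded factor $(p/p_c)^{2\ell}$.

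The main obstacle is precisely this second term: making the two-arm (disjoint-occurrence) estimate for the intrinsic-radius event rigorous on the torus, with the restricted-subgraph version of the Kozma–Nachmias bound \eqref{e:bd-intrad-torus}, and tracking the coupling factors $(p/p_c)^{c\ell}$ so that they stay bounded. The first term is routine given \eqref{e:supercrit-coupling-2} and \cref{prop:ub-at-p_c}; the bookkeeping of $\delta$ (which enters both through $(1+\delta)$ and through the choice of $\ell \asymp \delta V^{1/3}$, producing the $\delta^{-2}$ in the error) is a matter of care rather than difficulty. I would organize the write-up as: (1) fix $\ell = \lceil c\delta V^{1/3}\rceil$; (2) handle the in-ball term via coupling + \cref{prop:ub-at-p_c}; (3) handle the out-of-ball term via the two-arm estimate + \eqref{e:ub-intrad-pq} + \eqref{e:bd-intrad-torus}, borrowing the structure of the proof of \cref{prop:ub-at-p_c}; (4) combine, choosing constants so the multiplicative error is $1+\delta$ and the additive error is $C_A\delta^{-2}V^{-2/3}$.
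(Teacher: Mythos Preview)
Your proposal is correct and follows the same approach as the paper: split according to intrinsic distance at a cutoff $\gamma \asymp \delta V^{1/3}$, handle the near term via \eqref{e:supercrit-coupling-2} together with \cref{prop:ub-at-p_c}, and handle the far term via a product of two intrinsic one-arm probabilities using \eqref{e:ub-intrad-pq} and the restricted-subgraph bound \eqref{e:bd-intrad-torus}.

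One clarification on your second-term sketch: there is no ``intermediate vertex'' to sum over and no $V^{-1}\cdot V$ bookkeeping. The argument (following \cite{KN09,HS14}) is cleaner than that. If $d_{\mathrm{int}}(0,x) \ge 3\gamma$ then the intrinsic balls $B_\mathrm{int}(0,\gamma)$ and $B_\mathrm{int}(x,\gamma)$ are \emph{vertex-disjoint}; letting $G$ be the random set of edges touching $B_\mathrm{int}(x,\gamma-1)$, one still has $\partial B_\mathrm{int}^{G^c}(0,\gamma)\neq\varnothing$. Conditioning on $\{G^c=g\}$ factorizes the two one-arm events (they depend on disjoint edge sets), and taking the maximum over $g$ lets you apply \eqref{e:bd-intrad-torus} to each factor, yielding a bound $\preceq ((p/p_c)^\gamma/\gamma)^2 \asymp \delta^{-2}V^{-2/3}$ directly. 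This is the rigorous substitute for the BK inequality you were reaching for, which as you noted does not apply to intrinsic-radius events because of the shared closed edges.
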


\begin{proof}
Let $x \in \T_r^d$, $A>0$ and $\delta >0$.
By the monotonicity of $\tau_{p}^\T(x)$ in $p$ and the independence of the upper bound on $p$,
it suffices to prove \eqref{e::ub-above-p_c} for $p=p_c+\eps$ with $\eps = A V^{-1/3}$.
We set $\gamma = \lfloor \delta/\eps \rfloor$
and begin with
the decomposition
\begin{align}
\label{e:supercrit_bound_split}
	\P_{p_c + \eps}^\T(0 \conn x)
	&=\P_{p_c + \eps}^\T(x \in  \partial B_\mathrm{int}(0,\ell) \text{ for some }\ell )\nnb
	&\le \P_{p_c + \eps}^\T(x \in  B_\mathrm{int}(0,3\gamma))+\P_{p_c + \eps}^\T(x \in \partial B_\mathrm{int}(0,\ell) \text{ for some } \ell \geq 3\gamma).
\end{align}
For the first term in \eqref{e:supercrit_bound_split}, we use \eqref{e:supercrit-coupling-2} and Proposition~\ref{prop:ub-at-p_c} to see that
\begin{align}
\label{e:term1}
	\P_{p_c + \eps}^\T(x \in  B_\mathrm{int}(0,3\gamma))
	&\leq \left(\frac{p_c+\eps}{p_c}\right)^{3\gamma}\P^\T_{p_c}(x \in  B_\mathrm{int}(0,3\gamma))\nnb
    &\leq e^{3\gamma\eps/p_c}\P^\T_{p_c}(0 \conn x) \nnb
	&\leq e^{3\delta/p_c}(\tau_{p_c}(x) + C V^{-2/3}).
\end{align}

We consider now the second term in \eqref{e:supercrit_bound_split}.
For this we argue
as in the proof of \cite[(1.11)]{HS14} (see also \cite[Lemma~2.6]{KN09}).
We first observe that
on the event $\{ x \in \partial B_\mathrm{int}(0,\ell) \text{ for some } \ell \geq 3\gamma \}$,
the two events $\{\partial B_\mathrm{int}(0,\gamma)\neq \varnothing\}$
and $\{\partial B_\mathrm{int}(x,\gamma) \neq \varnothing\}$ must both occur.  However,
these two events do not necessarily occur disjointly. Indeed, in order for $d_{\text{int}}(0,y)$ to be large there
must not only be a long open path from $0$ to $y$ but also \emph{no shorter path}; the sets of closed edges guaranteeing that no such short path exists may be
shared in the common realisation of the two events. On the other hand,
the set of vertices in $B_\mathrm{int}(0,\gamma)$ and in $B_\mathrm{int}(x,\gamma)$ are disjoint.
To deal with this situation,
we define $G$ to be the random graph whose edge set consists of all
edges which touch $B_\mathrm{int}(x,\gamma-1)$ (the vertex set of $G$ consists
of the vertices incident to at least one edge in this set); these are exactly the edges needed to
determine the random set $B_\mathrm{int}(x,\gamma)$. From now on we identify subgraphs of the torus with subsets of $\bbB(\bbT_r^d)$ and write $g^c$ for the complement of a subgraph $g \subseteq \bbB(\bbT_r^d)$. Since
$B_\mathrm{int}(0,\gamma)$ and $B_\mathrm{int}(x,\gamma)$ are disjoint, we see that
\begin{align}
	& \P_{p_c + \eps}^\T( x \in \partial B_\mathrm{int}(0,\ell) \text{ for some } \ell \geq 3\gamma )
	\nnb &\qquad \leq \P_{p_c + \eps}^\T(\partial B^{G^c}_\mathrm{int}(0,\gamma)\neq \varnothing,\;\partial B_\mathrm{int}(x,\gamma) \neq \varnothing)\nnb
	&\qquad
= \sum_{g \subset \B(\T_r^d)}\P_{p_c + \eps}^\T(\partial B^{g}_\mathrm{int}(0,\gamma)\neq \varnothing,\;\partial B_\mathrm{int}(x,\gamma) \neq \varnothing, \; G^c = g).
\end{align}
By definition of $G$, the events $\{\partial B^{g}_\mathrm{int}(0,\gamma)\neq \varnothing \}$
and $\{ G^c = g,\;\partial B_\mathrm{int}(x,\gamma) \neq \varnothing\}$ depend on different edges (namely those of $g$ and $g^c$ respectively), and hence
\begin{align}
	&\P_{p_c + \eps}^\T( x \in \partial B_\mathrm{int}(0,\ell) \text{ for some } \ell \geq 3\gamma )
	\nnb & \qquad
\leq  \sum_{g \subset \B(\T_r^d)}\P_{p_c + \eps}^\T(\partial B^{g}_\mathrm{int}(0,\gamma)\neq \varnothing)\P_{p_c + \eps}^\T(\partial B_\mathrm{int}(x,\gamma) \neq \varnothing, \; G^c = g) \nnb
	&\qquad \leq \max\limits_{g \subset \B(\T_r^d)}\P_{p_c + \eps}^\T(\partial B^{g}_\mathrm{int}(0,\gamma)\neq \varnothing) \P_{p_c + \eps}^\T(\partial B_\mathrm{int}(x,\gamma) \neq \varnothing)\nnb
\label{e:longsuper}
	&\qquad \leq \max\limits_{g \subset \B(\T_r^d)}\P_{p_c + \eps}^\T(\partial B^{g}_\mathrm{int}(0,\gamma)\neq \varnothing)^2 .
\end{align}
Now, we have by \eqref{e:ub-intrad-pq} with $p=p_c$ and $q = p_c+\eps$
 and by \eqref{e:bd-intrad-torus} that
\begin{equation}
    \P_{p_c+\eps}^\T(\partial B^{g}_\mathrm{int}(0,\gamma)\neq \varnothing)
    \le
    \left(\frac{p_c+\eps}{p_c}\right)^{\gamma}
    \P_{p_c}^\T(\partial B^{g}_\mathrm{int}(0,\gamma)\neq \varnothing) \preceq  \left(\frac{p_c+\eps}{p_c}\right)^{\gamma}
   \frac{1}{\gamma}
\end{equation}
and hence that
\begin{align}
\label{e:term2}
	\P_{p_c + \eps}^\T(x \in \partial B_\mathrm{int}(0,\ell) \text{ for some } \ell \geq 3\gamma)
	&\preceq \frac{1}{\gamma^2} \left(\frac{p_c+\eps}{p_c}\right)^{2\gamma}
    \preceq \frac{\eps^2}{\delta^2} e^{2\delta/p_c} .
\end{align}

Altogether, by \eqref{e:term1} and \eqref{e:term2} we therefore have
\begin{equation}
	\tau_p^\T(x) \leq e^{3\delta/p_c}\tau_{p_c}(x)
	+C\delta^{-2}e^{3\delta/p_c}V^{-2/3}.
\end{equation}
Finally, we replace
$e^{3\delta/p_c}$ by $1+\delta'$,
and we may obtain in this way
any $\delta'\in (0,1]$.
This gives the desired result and the proof
is complete.
\end{proof}

\section*{Acknowledgements}
This work was carried out primarily while TH was a Senior Research Associate at the University of Cambridge, during which time he was supported by ERC starting grant 804166 (SPRS).
The work of EM and GS was supported in part by NSERC of Canada.


\end{document}